\documentclass[runningheads,numbook]{svjour3}
\usepackage{amsmath,amssymb}
\usepackage{ntheorem}
\usepackage{cite}
\usepackage[left=4.5cm, right=4.5cm, top=4cm, bottom=4cm]{geometry}
\usepackage{color}
\usepackage{hyperref}
\hypersetup{
	colorlinks=true,
	linkcolor=blue,     citecolor=red,     urlcolor=blue  }
\usepackage[colorinlistoftodos]{todonotes}
\usepackage{mathptmx}   
\usepackage[weather,alpine,misc,geometry]{ifsym}
\usepackage{amsfonts}
\usepackage{graphicx}%
\usepackage[capitalise,nameinlink]{cleveref}
\crefformat{equation}{(#2#1#3)}
\crefmultiformat{equation}{(#2#1#3)}%
{ and~(#2#1#3)}{, (#2#1#3)}{ and~(#2#1#3)}
\crefrangeformat{equation}{(#3#1#4)--(#5#2#6)}
\crefrangemultiformat{equation}{(#3#1#4)--(#5#2#6)}%
{ and~(#3#1#4)--(#5#2#6)}{, (#3#1#4)--(#5#2#6)}{ and~(#3#1#4)--(#5#2#6)}

\newcommand{\al}{\alpha}

\newcommand{\de}{\delta}
\newcommand{\eps}{\varepsilon}
\newcommand{\bx}{\bar x}
\newcommand{\by}{\bar y}
\newcommand{\bz}{\bar z}

\newcommand{\iv}{^{-1} }

\newcommand {\R} {\mathbb R}
\newcommand {\N} {\mathbb N}

\newcommand {\B} {\mathbb B}
\newcommand {\Sp} {\mathbb S}
\newcommand {\gph} {{\rm gph}\,}
\newcommand {\dom} {{\rm dom}\,}

\newcommand {\cl} {{\rm cl}\,}

\newcommand {\Int} {{\rm int}\,}

\newcommand{\toto}{\rightrightarrows}
\renewcommand{\iff}{$ \Leftrightarrow\ $}
\newcommand{\folgt}{$ \Rightarrow\ $}

\newcommand{\vertiii}[1]{\left\vert\kern-0.25ex\left\vert\kern-0.25ex\left\vert #1\right\vert\kern-0.25ex\right\vert\kern-0.25ex\right\vert}
\newcommand{\vertiiiBig}[1]{\Big\vert\kern-0.25ex\Big\vert\kern-0.25ex\Big\vert #1\Big\vert\kern-0.25ex\Big\vert\kern-0.25ex\Big\vert}

\def\es{\emptyset}

\def\SVM{set-valued mapping}

\def\Fr{Fr\'echet}

\newcommand{\blue}[1]{\textcolor{blue}{#1}}


\newcommand{\red}[1]{\textcolor{red}{#1}}

\newcommand{\ang}[1]{\left\langle #1 \right\rangle}
\newcommand{\qdtx}[1]{\quad\mbox{#1}\quad}
\newcommand{\AND}{\quad\mbox{and}\quad}
\newcounter{mycount}

\newcommand{\AK}[1]{\todo[inline]{AK {#1}}}





\makeatletter
\makeatother

\smartqed
\renewcommand\theenumi{\roman{enumi}}

\renewcommand{\red}[1]{#1}

\newcommand{\Thao}[1]{\todo[inline,color=magenta!40]{Thao {#1}}}
\newcommand{\NDC}[1]{\todo[inline,color=green!40]{NDC {#1}}}

\begin{document}
\title{
Extremality of Families of Sets and Set-Valued Optimization
}
\author{
Nguyen Duy Cuong
\and
Alexander Y. Kruger \and Nguyen Hieu Thao}
\titlerunning{Extremality of Families of Sets and Set-Valued Optimization}
	
\institute{
Nguyen Duy Cuong \at
Department of Mathematics, College of Natural Sciences, Can Tho University, Can Tho \red{City}, Vietnam\\
\email{ndcuong@ctu.edu.vn}, ORCID: 0000-0003-2579-3601
\and
A. Y. Kruger (corresponding author) \at
Optimization Research Group,
Faculty of Mathematics and Statistics,
Ton Duc Thang University,
Ho Chi Minh City, Vietnam \\
\email{alexanderkruger@tdtu.edu.vn}, ORCID: 0000-0002-7861-7380
\and
Nguyen Hieu Thao \at
School of Science, Engineering and Technology, RMIT University Vietnam, Ho Chi Minh City, Vietnam\\
\email{thao.nguyenhieu@rmit.edu.vn}, ORCID: 0000-0002-1455-9169}
\dedication{\red{Dedicated to R. Tyrrell Rockafellar, a trailblazer in optimization and analysis, on his 90th birthday}}
\maketitle

\begin{abstract}
\red{The paper continues our recent work \cite{CuoKruTha24} where another extension of the \emph{extremal principle} has been established.}
We demonstrate its applicability to set-valued optimization problems with general preferences, weakening the assumptions of the known results and streamlining their proofs.
\end{abstract}

\keywords{extremal principle \and separation \and optimality conditions \and set-valued optimization}

\subclass{49J52 \and 49J53 \and 49K40 \and 90C30 \and 90C46}

\setcounter{tocdepth}{5}

\section{Introduction}

\red{The paper continues our recent work \cite{CuoKruTha24} where a new extremality model involving collections of arbitrary families of sets has been studied and another extension of the \emph{extremal principle} has been established.}

\red{We consider applications of the latter result to set-valued optimization problems of the type
\begin{gather}
\label{P}
\tag{$P$}
\text{minimize }\;F(x)\quad \text{subject to }\; x\in\Omega,
\end{gather}
where $F:X\toto Y$ is a \SVM\ between normed vector spaces, $\Omega$ is a subset of $X$, and the space $Y$ is equipped with a general preference relation determined by an abstract \emph{level-set mapping} $L:Y\toto Y$, as well as more general problems with set-valued constraints.
}

\red{The conventional special case of importance is when $Y$ is equipped with a partial order determined by a nontrivial pointed convex cone, and optimality is understood in the sense of \emph{Pareto}.
\begin{definition}
[Pareto optimality]
\label{Pareto}
Let $X$ and $Y$ be normed spaces, $F:X\toto Y$, $\Omega\subset X$, $\bx\in\Omega$ and $\by\in F(\bx)$.
The point $(\bx,\by)$ is a (local) Pareto solution to \eqref{P} with respect to a nontrivial pointed convex cone $K\subset Y$ if there is a $\de\in(0,+\infty]$ such that
$F(\Omega\cap B_\de(\bx))\cap(\by-K)=\{\by\}$.
\end{definition}
\if{
\begin{remark}
The cone $K$ in Definition~\ref{Pareto} is usually
assumed closed, in which case the last inclusion is equivalent to the equality $F(\Omega\cap B_\de(\bx))\cap(\by-K)=\{\by\}$.
\end{remark}
}\fi
}

The conventional extremal principle \cite{KruMor80,MorSha96,Mor06.1} covers a wide range of problems in optimization and variational analysis as demonstrated, e.g., in the books \cite{BorZhu05,Mor06.1,Mor06.2}.
\red{The advantages of employing the extremal principle as the main tool when deducing necessary optimality conditions in vector and set-valued optimization problems compared to the scalarization and other traditional techniques were emphasized in \cite[Sect.~5.3 and 5.5.18]{Mor06.2}.
At the same time, there exist multiobjective problems with more general
preference relations, which ``may go
far beyond generalized Pareto/weak Pareto concepts of optimality'' \cite[p.~70]{Mor06.2} that cannot be covered by traditional techniques or within the framework of the conventional extremal principle using linear translations.}
The first example of this kind was identified in Zhu \cite{Zhu00}.
Fortunately, such problems can be handled with the help of a more flexible extended version of the extremal principle using nonlinear perturbations (deformations) of the sets defined by set-valued mappings.
Such an extension was developed in Mordukhovich et al. \cite{MorTreZhu03} (see also \cite{BorZhu05,Mor06.2}) and applied
to various multiobjective problems \cite{ZheNg06,LiNgZhe07,Bao14.2}.
\red{Below is our interpretation of the corresponding definitions from \cite{MorTreZhu03,Mor06.2} complying with the notation and terminology adopted in the current paper.
\begin{definition}
[Extremality: set-valued perturbations]
\label{D1.4}
Let $\Omega_1,\ldots,\Omega_n$ be subsets of a normed space $X$, $\bx\in\bigcap_{i=1}^n\Omega_i$, and,
for each $i=1,\ldots,n$,
$S_i:M_i\toto X$ be a \SVM\ from a metric space
$(M_i,d_i)$ to $X$ and $S_i(\bar s_i)=\Omega_i$ for some $\bar s_i\in M_i$.
The collection $\{\Omega_1,\ldots,\Omega_n\}$
is extremal at $\bx$ with respect to $\{S_1,\ldots,S_n\}$ if there exists a $\rho\in(0,+\infty]$ such that, for any $\varepsilon>0$, there exist $s_i\in M_i$ $(i=1,\ldots,n)$ such that
$\max_{1\le i\le n}d_i(s_i,\bar s_i)<\varepsilon$,
$\max_{1\le i\le n}d(\bx,S_i(s_i))<\varepsilon$
and
$\bigcap_{i=1}^nS_i(s_i)\cap B_\rho(\bx){=\emptyset}$.
\end{definition}
}

The model in
\red{Definition~\ref{D1.4}}
exploits non-inter\-section of perturbations of given sets $\Omega_1,\ldots,\Omega_n$.
The perturbations are chosen from the respective families of sets $\Xi_i:=\{S_i(s)\mid s\in M_i\}$
$(i=1,\ldots,n)$
determined by given set-valued mappings $S_i:M_i\toto X$
$(i=1,\ldots,n)$ on metric spaces.
In the particular case of linear translations, i.e., when, for all $i=1,\ldots,n$, $(M_i,d_i)=(X,d)$
and $S_i(a)=\Omega_i-a$ $(a\in X)$, the model reduces to the conventional extremal principle.
It was shown by examples in \cite{MorTreZhu03,Mor06.2} that the framework of set-valued perturbations is richer than that of linear translations.
With minor modifications in the proof, the conventional extremal principle was extended to the set-valued setting producing a more advanced model.

\red{Definition~\ref{D1.4} talks about extremality of a collection of sets, but in fact it is about certain properties of a collection of \SVM s
$S_i:M_i\toto X$ $(i=1,\ldots,n)$, loosely connected with the given sets.}
This model has been refined in \cite{CuoKruTha24}, making it more flexible and, at the same time, simpler.
\red{Instead of the \SVM s $S_1,\ldots,S_n$,}
the refined model studies extremality and stationarity of nonempty families
\red{$\Xi_1,\ldots,\Xi_n$
	of arbitrary sets}
and is applicable to a wider range of variational problems.
The next definition and theorem are simplified versions of \cite[Definition~3.2 and Theorem~3.4]{CuoKruTha24}, respectively.

\begin{definition}
[Extremality and stationarity: families of sets]
\label{D1.5}
Let $\Xi_1,\ldots,\Xi_n$ be families of subsets of a normed space $X$, and $\bx\in X$.
The collection $\{\Xi_1,\ldots,\Xi_ n\}$ is
\begin{enumerate}
\item
\label{D1.5.1}
extremal at $\bx$ if there is a $\rho\in(0,+\infty]$ such that, for any $\varepsilon>0$, there exist $A_i\in\Xi_i$ $(i=1,\ldots,n)$ such that $\max_{1\le i\le n}d(\bx,A_i)< \varepsilon$ and
$\bigcap_{i=1}^nA_i\cap B_\rho(\bx)=\emptyset$;
\item
\label{D1.5.3}
stationary at $\bx$ if, for any $\varepsilon>0$, there exist a $\rho\in(0,\varepsilon)$ and  $A_i\in\Xi_i$ $(i=1,\ldots,n)$ such that
$\max_{1\le i\le n}d(\bx, A_i)<\varepsilon\rho$ and
$\bigcap_{i=1}^nA_i\cap B_\rho(\bx)=\emptyset$;
\item
\label{D1.5.2}
approximately stationary at $\bx$ if, for any $\varepsilon>0$, there exist a $\rho\in(0,\varepsilon)$, $A_i\in\Xi_i$ and $x_i\in B_{\varepsilon}(\bx)$ $(i=1,\ldots,n)$ such that
$\max_{1\le i\le n}d(x_i, A_i)<\varepsilon\rho$ and
$\bigcap_{i=1}^n(A_i-x_i)\cap(\rho\B)=\emptyset$.
\end{enumerate}
\end{definition}

\begin{theorem}
\label{T1.4}
Let
$\Xi_1,\ldots,\Xi_n$ be families of closed subsets of a Banach space $X$, and $\bx\in X$.
If $\{\Xi_1,\ldots,\Xi_n\}$ is approximately stationary at $\bx$,
then, for any $\varepsilon>0$, there exist $A_i\in\Xi_i$,
$x_i\in A_i\cap B_\varepsilon(\bx)$, and $x_i^*\in N^C_{A_i}(x_i)$ $(i=1,\ldots,n)$ such that
\begin{gather*}
\Big\|\sum_{i=1}^nx_i^*\Big\|<\eps
\qdtx{and}
\sum_{i=1}^n\|x_i^*\|=1.
\end{gather*}

If $X$ is Asplund, then $N^C$ in the above assertion can be replaced by $N^F$.
\end{theorem}

The symbols $N^C$ and $N^F$ in the above theorem denote, respectively, the Clarke and \Fr\ normal cones.
Recall that a Banach space is \emph{Asplund} if every continuous convex function on an open convex set is Fr\'echet differentiable on a dense subset \cite{Phe93}, or equivalently, if the dual of each its separable subspace is separable.
We refer the reader to \cite{Phe93,Mor06.1} for discussions about and characterizations of Asplund spaces.
All reflexive, particularly, all finite dimensional Banach spaces are Asplund.
Most assertions involving \Fr\ normals, subdifferentials and coderivatives are only valid in
Asplund spaces; see \cite{MorSha96}.

\if{
\AK{6/08/24.
I have removed the ``additional'' conditions from the statement as we do not use them in the set-valued part.
As a result, $\Omega_1,\ldots,\Omega_n$ are not involved in the conclusions.
This does not seem right.
To be checked.}
\NDC{9/8/24.
I think when $\Xi_i:=\{\Omega_i\}$, then Theorem \ref{T1.4} recaptures the conventional extremal principle.
However, Theorem~\ref{T1.4} does not require $\bx\in\bigcap_{i=1}^n\Omega_i$.
}
\NDC{9/8/24.
It still not right. When $\Xi_i:=\{\Omega_i\}$, the approximate stationarity in Definition~\ref{D1.5} does not reduces to the conventional one.}
\NDC{9/8/24.
I think in a particular case when $\Xi_i:=\{\Omega_i-a\mid a\in X\}$, Theorem~\ref{T1.4} reduces to the conventional extremal principle.
Concerning the missing of $\Omega_i$ in the conclusion part of Theorem~\ref{T1.4}, I think the similar situation happens for the case of set-valued extremal principle, i.e., the dual vectors are chosen from $N^F_{S_i(s_i)}(x_i)$ and the sets $ S_i(\bar s_i)$ disappear; the only relevance between $S_i(s_i)$ and $S_i(\bar s_i)$ is the requirement $d(s_i,\bar s_i)<\varepsilon$ which seems to be minor of importance since the definition of extremality/stationarity does not require any continuity assumptions of the set-valued mappings $S_i$.
}
\AK{10/08/24.
It looks like we can take $\Omega_1=\ldots=\Omega_n:=X$ in Theorem~\ref{T1.4}.
This is the weakest version of Definition~\ref{D1.5}\,\ref{D1.5.2}.
Then $\Omega_1,\ldots,\Omega_n$ can be dropped completely.}
\NDC{10/8/24.
I think we can add item (iv) in Definition~\ref{D1.5} about `approximately stationary at $\bx$' where it requires $x_i\in B_\varepsilon(\bx)$ instead of $x_i\in\Omega_i\cap B_\varepsilon(\bx)$.
I think a comment about the additional condition of the dual vectors should be added.}
}\fi

\begin{remark}
\begin{enumerate}
\item
Part \eqref{D1.5.3} of
Definition~\ref{D1.5} is the explicit form of
\cite[Definition~3.2\,(iii)]{CuoKruTha24}, while part
\eqref{D1.5.2} is a particular case of \cite[Definition~3.2\,(ii)]{CuoKruTha24}
with $\Omega_1=\ldots=\Omega_n:=X$,
thus, representing the weakest version of the property in \cite[Definition~3.2\,(ii)]{CuoKruTha24}.
\item
It is easy to see that \eqref{D1.5.1} \folgt \eqref{D1.5.3} \folgt \eqref{D1.5.2} in Definition~\ref{D1.5}.
Hence, the necessary conditions in Theorem~\ref{T1.4} are also valid for the stationarity and extremality.
\item
Theorem~\ref{T1.4} shows that approximate stationarity of a given collection of families of closed sets implies its fuzzy (up to $\eps$) separation.
Note that, unlike the general model discussed in \cite{BuiKru19}, not only the points $x_i$ and $x_i^*$ $(i=1,\ldots,n)$ usually involved in fuzzy separation statements depend on $\eps$, but also the
sets $A_1,\ldots,A_n$.
\item
For each $i=1,\ldots,n$, the sets $A_i$ making the family $\Xi_i$ in Definition~\ref{D1.5} can be considered as perturbations of some given set $\Omega_i$.
With this interpretation in mind, Definition~\ref{D1.5}\,\eqref{D1.5.1} covers
\red{Definition~\ref{D1.4}}.
Note that the ``perturbation'' sets in
\red{Definition~\ref{D1.4}}
are rather loosely connected with the given sets.
\end{enumerate}
\end{remark}

\begin{example}
Let $\Xi_1$ consist of a single one-point set $\{0\}\subset\R$,  and $\Xi_2$ be a family of singletons $\{1/n\}$ for $n\in\N$.
It is easy to see that
$\{\Xi_1,\Xi_2\}$ is extremal at $0$ in the sense of Definition~\ref{D1.5}\,\eqref{D1.5.1} (even with $\rho=+\infty$).
The subsets of $\Xi_1,\Xi_2$ may be considered as ``perturbations'' of the sets $\Omega_1=\Omega_2:=\R$ in the sense of
\red{Definition~\ref{D1.4}}.
The pair $\{\Omega_1,\Omega_2\}$ is clearly not extremal at $0$ in the conventional sense of \cite{KruMor80,Mor06.1}.
\end{example}

The more general (and simpler) model in Definition~\ref{D1.5} and Theorem~\ref{T1.4} is capable of treating a wider range of
applications.
In this paper, we demonstrate the applicability of Theorem~\ref{T1.4} to set-valued optimization problems
\red{of the type \eqref{P} and more general ones}.
This allows us to expand the range of set-valued optimization models studied in earlier publications, weaken their assumptions and streamline the proofs.

We study extremality/stationarity properties of the triple $\{F,\Omega,\Xi\}$, where $F:X\toto Y$ is a \SVM\ between normed spaces, $\Omega$ is a subset of $X$, and $\Xi$ is a nonempty family of subsets of $Y$.
The latter family may, in particular be determined
by an abstract \emph{level-set mapping} defining a \emph{preference} relation on $Y$.

Members of $\Xi$ do not have to be simply translations (deformations) of a fixed set (ordering cone).
Extremality/stationarity properties of the triple $\{F,\Omega,\Xi\}$ reduce to the corresponding properties of the two special families of subsets of $X\times Y$:
\red{
\begin{gather}
\label{Xi}
\Xi_1:=\{\gph F\}\quad\text{and}\quad \Xi_2:=\{\Omega\times A\mid A\in\Xi\}.
\end{gather}
The first family consists of the single set $\gph F$, and the first component of each member of the second family is always the given set $\Omega$; only the second component varies.}

The properties are illustrated by examples.
Application of Theorem~\ref{T1.4} yields necessary conditions for approximate stationarity and, hence, also stationarity and extremality.
Natural \emph{qualification conditions} in terms of Clarke or \Fr\ coderivatives and normal cones are provided, which allow one to write down the necessary conditions in the form of an abstract \emph{multiplier rule}.
The statements cover the corresponding results in
\cite{MorTreZhu03,ZheNg05.2,ZheNg06}.

Requirements on {preference} relations defined by level-set mappings, making them meaningful in optimization and applications, are discussed.
A certain subset of properties, which are satisfied by most conventional and many other preference relations, is established.
The properties are shown to be in general weaker than those used in \cite{Mor06.2,Bao14.2,KhaTamZal15}, but still sufficient for the corresponding set-valued optimization problems to fall within the theory developed in the current paper.
Several multiplier rules for problems with a single \SVM, and then with multiple \SVM s are formulated.

The structure of the paper is as follows.
Sect.~\ref{S2} recalls some definitions and facts used throughout the paper.
The applicability of Theorem~\ref{T1.4} is illustrated in Sects.~\ref{S5}--\ref{S6} considering set-valued optimization problems with general preference relations.
A model with a single \SVM\ is studied in Sect.~\ref{S5}.
A particular case of this model
when the family $\Xi$ is determined
by an abstract level-set mapping is considered in Sect.~\ref{S4}.
A more general model with multiple \SVM s is briefly discussed in Sect.~\ref{S6}.

\section{Preliminaries}\label{S2}

Our basic notation is standard, see, e.g., \cite{Mor06.1,RocWet98,DonRoc14,Iof17}.
Throughout the paper, if not explicitly stated otherwise, $X$ and $Y$ are  normed spaces.
Products of normed spaces are assumed to be equipped with the maximum norm.
The topological dual of a normed space $X$ is denoted by $X^*$, while $\langle\cdot,\cdot\rangle$ denotes the bilinear form defining the pairing between the two spaces.
The open ball with center $x$ and radius $\delta>0$ is denoted by $B_\delta(x)$.
If $(x,y)\in X\times Y$, we write $B_\varepsilon(x,y)$ instead of $B_\varepsilon((x,y))$.
The open unit ball is denoted by $\B$ with a subscript indicating the space, e.g., $\B_X$ and $\B_{X^*}$.
Symbols $\R$ and $\N$ stand for the real line and the set of all positive integers, respectively.

The interior and closure of a set $\Omega$ are denoted by $\Int\Omega$ and $\cl\Omega$, respectively.
The distance from a point $x \in X$ to a subset $\Omega\subset X$ is defined by $d(x,\Omega):=\inf_{u \in \Omega}\|u-x\|$, and we use the convention $d(x,\emptyset)=+\infty$.

Given a subset $\Omega$ of a normed space $X$ and a point $\bx\in \Omega$, the sets (cf. \cite{Kru03,Cla83})
\begin{gather}\label{NC}
N_{\Omega}^F(\bx):= \Big\{x^\ast\in X^\ast\mid
\limsup_{\Omega\ni x{\rightarrow}\bar x,\;x\ne\bx} \frac {\langle x^\ast,x-\bx\rangle}
{\|x-\bx\|} \le 0 \Big\},
\\\label{NCC}
N_{\Omega}^C(\bx):= \left\{x^\ast\in X^\ast\mid
\ang{x^\ast,z}\le0
\qdtx{for all}
z\in T_{\Omega}^C(\bx)\right\}
\end{gather}
are the \emph{Fr\'echet} and \emph{Clarke normal cones} to $\Omega$ at $\bx$,
where $T_{\Omega}^C(\bx)$
stands for the \emph{Clarke tangent cone} to $\Omega$ at $\bx$:
\begin{multline*}
T_{\Omega}^C(\bx):= \big\{z\in X\mid
\forall x_k{\rightarrow}\bx,\;x_k\in\Omega,\;\forall t_k\downarrow0,\;\exists z_k\to z\\
\mbox{such that}\quad
x_k+t_kz_k\in \Omega \qdtx{for all}
k\in\N\big\}.
\end{multline*}
The sets \eqref{NC} and \eqref{NCC} are nonempty
closed convex cones satisfying $N_{\Omega}^F(\bx)\subset N_{\Omega}^C(\bx)$.
If $\Omega$ is a convex set, they reduce to the normal cone in the sense of convex analysis:
\begin{gather*}\label{CNC}
N_{\Omega}(\bx):= \left\{x^*\in X^*\mid \langle x^*,x-\bx \rangle \leq 0 \qdtx{for all} x\in \Omega\right\}.
\end{gather*}
By convention, we set $N_{\Omega}^F(\bx)=N_{\Omega}^C(\bx):=\es$ if $\bx\notin \Omega$.

A set-valued mapping $F:X\rightrightarrows Y$ between two sets $X$ and $Y$ is a mapping, which assigns to every $x\in X$ a (possibly empty) subset $F(x)$ of $Y$.
We use the notations $\gph F:=\{(x,y)\in X\times Y\mid
y\in F(x)\}$ and $\dom\: F:=\{x\in X\mid F(x)\ne\emptyset\}$
for the graph and the domain of $F$, respectively, and $F^{-1} : Y\rightrightarrows X$ for the inverse of $F$.
This inverse (which always exists with possibly empty values at some $y$) is defined by $F^{-1}(y) :=\{x\in X \mid y\in F(x)\}$, $y\in Y$. Obviously $\dom F^{-1}=F(X)$.

If $X$ and $Y$ are normed spaces, the \emph{Clarke coderivative} $D^{*C}F(x,y)$ of $F$ at $(x,y)\in\gph F$ is a set-valued mapping defined by
\begin{align}\label{coder}
D^{*C}F(x,y)(y^*):=\{x^*\in X^*\mid (x^*,-y^*)\in N^C_{\gph F}(x,y)\},
\quad
y^*\in Y^*.
\end{align}
Replacing the Clarke normal cone in \eqref{coder} by the \Fr\ one, we obtain the definition of the \emph{\Fr\ coderivative}.
\if{
For the case of a single-valued function $f:X\to Y$ between normed spaces, we simply write $D^{*C}f(x)(y^*)$ and $D^{*F}f(x)(y^*)$ for the Fr\'echet and Clarke coderivatives.
It follows directly from the definitions that
\begin{gather*}
D^{*F}f(\bx)(y^*)=\partial^F\langle y^*,f\rangle(\bx)
\;\;\text{for all}\;\; y^*\in Y^*
\end{gather*}
 if $f$ is  Lipschitz continuous at $\bx$, see, e.g., \cite[p. 609]{MorSha97.2} and \cite[Theorem~1.90]{Mor06.1}.
 Note that the latter result is for \textit{mixed} coderivatives, but its proof is also applicable to Fr\'echet coderivatives.
}\fi
\if{
\NDC{21.2.25
Should a proof for the above statement be provided?
I cannot find a similar statement for Clarke objects.
what do you think about the Clarke version of the above claim.}
\AK{22/02/25.
I don't understand.
It looks like you have commented out ``the claim''.}
}\fi

\begin{definition}
[Aubin property]
\label{D2.1}
A mapping $F:X\rightrightarrows Y$ between metric spaces has the Aubin property at $(\bx,\by)\in\gph F$
\red{with constant $\tau>0$ if there exists a $\de>0$}
such that
\begin{align}
\label{D2.10-1}
d(y,F(x))\le \tau d(x,x')
\qdtx{for all} x,x'\in B_{\de}(\bx),\;y\in F(x')\cap B_{\de}(\by).
\end{align}
\end{definition}

Aubin property
\red{(sometimes referred to as the locally \emph{Lipschitz-like} property)}
is among the most widely used properties of \SVM s in variational analysis (see, e.g., \cite{AubFra90,RocWet98,Mor06.1,DonRoc14,Iof17}).
It is known, in particular, to be equivalent to the \emph{metric regularity} of the inverse mapping.
It also yields estimates for the normals to the graph of the (given) mapping.

\begin{lemma}
\label{L2.3}
Let $X$ and $Y$ be normed spaces, $F:X\toto Y$, and $(\bx,\by)\in\gph F$.
\begin{enumerate}
\item
If $F$ has the Aubin property at $(\bx,\by)$ with \red{constant} $\tau>0$, then there is a $\de>0$ such that
\begin{gather}
\label{L2.3-1}
\|x^*\|\le\tau\|y^*\|
\qdtx{for all}(x,y)\in\gph F\cap B_\de(\bx,\by),\;(x^*,y^*)\in N^F_{\gph F}(x,y).
\end{gather}
\item
If $\dim Y<+\infty$, then $N^F$ in the above assertion can be replaced by $N^C$.
\end{enumerate}
\end{lemma}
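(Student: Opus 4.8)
The plan is to prove (i) by a direct variational estimate that uses the Aubin property to manufacture admissible graph points in a prescribed direction, and then to deduce (ii) from (i) by a passage to the limit in which the hypothesis $\dim Y<+\infty$ enters decisively.

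For (i), let $\tau$ and $\delta_0$ be the modulus and radius from the Aubin property and set $\delta:=\delta_0/2$. Fix $(x,y)\in\gph F\cap B_\delta(\bx,\by)$ and $(x^*,y^*)\in N^F_{\gph F}(x,y)$. Given $\eps>0$, the definition \eqref{NC} of the \Fr\ normal cone supplies a $\gamma>0$ with
\[\langle x^*,u-x\rangle+\langle y^*,v-y\rangle\le\eps\max\{\|u-x\|,\|v-y\|\}\qquad\text{for all }(u,v)\in\gph F\cap B_\gamma(x,y).\]
For an arbitrary unit vector $h\in X$ and small $t>0$ one has $x,x+th\in B_{\delta_0}(\bx)$ and $y\in F(x)\cap B_{\delta_0}(\by)$, so \eqref{D2.10-1} gives $d(y,F(x+th))\le\tau t$; hence there is $v_t\in F(x+th)$ with $\|v_t-y\|<\tau t+t^2$. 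Substituting $(u,v)=(x+th,v_t)$, bounding $\langle y^*,v_t-y\rangle\ge-\|y^*\|\,\|v_t-y\|$ and $\max\{t,\|v_t-y\|\}\le(\tau+1)t$ for small $t$, dividing by $t$ and letting first $t\downarrow0$ and then $\eps\downarrow0$, I obtain $\langle x^*,h\rangle\le\tau\|y^*\|$. Taking the supremum over $\|h\|=1$ yields \eqref{L2.3-1}. The only points needing attention are the choice of $\delta$ ensuring that the perturbations $x+th$ remain in the Aubin \nbh, and the consistent use of the maximum norm on $X\times Y$.

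For (ii), the plan is to reach an arbitrary $(x^*,y^*)\in N^C_{\gph F}(x,y)$ through \Fr\ normals at nearby graph points and to transport the estimate of (i) to the limit. Using the finite-dimensional structure of $Y$, one represents $(x^*,y^*)$ through sequences $(x_k,y_k)\to(x,y)$ in $\gph F$ and $(x_k^*,y_k^*)\in N^F_{\gph F}(x_k,y_k)$ that converge to $(x^*,y^*)$ weak$^*$ in $X^*$ and in norm in $Y^*$. Once $(x_k,y_k)\in B_\delta(\bx,\by)$, part (i) gives $\|x_k^*\|\le\tau\|y_k^*\|$ for every $k$. Since $\dim Y<+\infty$, weak$^*$ convergence in $Y^*$ is norm convergence, so $\|y_k^*\|\to\|y^*\|$; combining this with the weak$^*$ lower semicontinuity of the norm on $X^*$, namely $\|x^*\|\le\liminf_k\|x_k^*\|$, propagates the inequality to the limit and yields $\|x^*\|\le\tau\|y^*\|$.

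I expect the limiting step in (ii) to be the main obstacle. The estimate from (i) lives on the \Fr\ normal cones at perturbed points, and carrying it over to $N^C_{\gph F}(x,y)$ forces control of the dual norms along the approximating sequences. Weak$^*$ lower semicontinuity disposes of the $X^*$-component for free, but the $Y^*$-component must converge \emph{in norm} for the product inequality $\|x_k^*\|\le\tau\|y_k^*\|$ to survive; this is exactly where $\dim Y<+\infty$ is indispensable, since otherwise $\|y_k^*\|$ could drop in the weak$^*$ limit and the bound be lost. I would therefore structure the argument so that the finite-dimensionality of $Y$ is invoked only at this single passage to the limit, keeping the rest of the reasoning valid in arbitrary normed spaces.
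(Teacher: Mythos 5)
Your part (i) is correct, and it is the standard argument for this well-known fact (which the paper records in its preliminaries without proof): the Aubin property produces, for every unit vector $h$ and small $t>0$, a point $v_t\in F(x+th)$ with $\|v_t-y\|<\tau t+t^2$, and feeding $(u,v)=(x+th,v_t)$ into the defining inequality of $N^F_{\gph F}(x,y)$ (with the maximum norm on $X\times Y$, consistent with the paper's conventions) gives $\langle x^*,h\rangle\le\tau\|y^*\|$ after letting $t\downarrow0$ and $\eps\downarrow0$. The choice $\de=\de_0/2$ and the estimate $\max\{t,\|v_t-y\|\}\le(\tau+1)t$ are in order, and the nonemptiness of $F(x+th)$ is guaranteed by $d(y,F(x+th))\le\tau t<+\infty$.

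Part (ii), however, contains a genuine gap, in fact two. First, the representation you rely on --- every $(x^*,y^*)\in N^C_{\gph F}(x,y)$ reached through sequences of \Fr\ normals $(x^*_k,y^*_k)\in N^F_{\gph F}(x_k,y_k)$ at nearby graph points --- is not available under the lemma's hypotheses: $X$ is an arbitrary normed space, while such representations (of the type $N^C=\cl\co$ of limiting normals, in the weak$^*$ sense) require $X$ to be Asplund and $\gph F$ to be closed; in a non-Asplund space the \Fr\ normal cone can be trivial at every point of a closed set while the Clarke cone is not, which is precisely why the paper singles out the Asplund case in all its statements. Second, and more fundamentally, even where such a representation is valid it only reaches the \emph{limiting} normal cone, whereas $N^C$ is its weak$^*$ closed \emph{convex hull}, and your estimate does not survive convexification: the set $\{(x^*,y^*)\mid\|x^*\|\le\tau\|y^*\|\}$ is not convex. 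Concretely, take $X=Y:=\R$ and $F(x):=\{|x|\}$, which has the Aubin property at $(0,0)$ with $\tau=1$. The vectors $(1,-1)$ and $(1,1)$ are \Fr\ normals to $\gph F$ at points on the two branches arbitrarily close to $(0,0)$, and both satisfy $\|x^*\|\le\|y^*\|$; yet a direct computation with the paper's definition gives $T^C_{\gph F}(0,0)=\{(0,0)\}$, hence $N^C_{\gph F}(0,0)=\R^2\ni(1,0)$, where the bound fails. So no argument that merely transports the conclusion of (i) to limits of \Fr\ normals can control the whole Clarke cone: the convexification inherent in $N^C$ (by Rockafellar's theorem, the Clarke normal cone to the graph of a Lipschitz mapping is a linear subspace at points of nonsmoothness) destroys the inequality. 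Any proof of (ii) must instead work with the polar description of $N^C$ via the Clarke tangent cone and bring $\dim Y<+\infty$ in on the \emph{primal} side (compactness of balls in $Y$ when constructing tangent directions $(h,w)$ with $\|w\|\le\tau\|h\|$), not merely in the dual passage to the limit as your final paragraph suggests; and the example above shows that even this route is obstructed at corner points of the graph, so the Clarke assertion should be treated with considerable caution as stated.
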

\begin{proof}
\begin{enumerate}
\item
is well known; see, e.g., \cite[Theorem~1.43(i)]{Mor06.1}.
(The latter theorem is formulated in \cite{Mor06.1} in the Banach space setting, but the proof is valid in arbitrary normed spaces.)
\item
Suppose $\dim Y<+\infty$, and $F$ has the Aubin property at $(\bx,\by)$ with
\red{constant} $\tau>0$, i.e., condition \eqref{D2.10-1} is satisfied for some $\de>0$.
Let $(x,y)\in B_\de(\bx,\by)\cap\gph F$ and
$(x^*,y^*)\in N_{\gph F}^C(x,y)$.
Take any sequences $(x_k,y_k)\in\gph F$ and $t_k>0$ such that $(x_k,y_k)\to(x,y)$ and $t_k\downarrow0$ as $k\to+\infty$.
Fix an arbitrary $u\in X$.
Without loss of generality, we can assume that $x_k, x_k+t_ku\in B_\de(\bx)$ and $y_k\in B_\de(\by)$ for all $k\in\N$.
By \eqref{D2.10-1}, for each $k\in\N$, there exists a point $y_k'\in F(x_k+t_ku)$ such that $\|y_k'-y_k\|\le \tau t_k\|u\|$.
Set $v_k:=(y_k'-y_k)/t_k$.
Then $\|v_k\|\le\tau\|u\|$.
Passing to subsequences, we can suppose that $v_k\to v\in Y$.
Observe that $(u,v_k)\to (u,v)$ as $k\to+\infty$, and $(x_k,y_k)+t_k(u,v_k)\in\gph F$ for each $k\in\N$.
Thus, $(u,v)\in T_{\gph F}^C(x,y)$, and $\|v\|\le\tau\|u\|$.
By the definition of the
Clarke normal cone,
we have $\ang{x^*,u}\le-\ang{y^*,v}\le\tau\|y^*\|\|u\|$.
Since vector $u$ is arbitrary, it follows that $\|x^*\|\le\tau\|y^*\|$.
\qed\end{enumerate}
\end{proof}

\section{Set-Valued Optimization: A Single Mapping}
\label{S5}
Let $X$ and $Y$ be normed spaces, $\Omega\subset X$, $F:X\toto Y$, $\bx\in\Omega$ and $\by\in F(\bx)$.
To model the setting of Definition~\ref{D1.5}, we consider a nonempty family $\Xi$ of subsets of $Y$, and
\red{two families of subsets of $X\times Y$ given by \eqref{Xi}.}
To emphasize the structure of the pair \eqref{Xi}, when referring to the corresponding properties in Definition~\ref{D1.5}, we will talk about extremality/stationarity of the triple $\{F,\Omega,\Xi\}$.

\begin{definition}
\label{D4.1}
The triple $\{F,\Omega,\Xi\}$ is
extremal (resp., stationary, approximately stationary) at $(\bx,\by)$ if the pair
\eqref{Xi} is extremal (resp., stationary, approximately stationary) at~$(\bx,\by)$.
\end{definition}

The next proposition is a direct consequence of Definitions~\ref{D1.5} and \ref{D4.1}.

\begin{proposition}
\label{P3.3}
The triple $\{F,\Omega,\Xi\}$ is
\begin{enumerate}
\item
\label{P3.3.1}
extremal at $(\bx,\by)$ if and only if there is a $\rho\in(0,+\infty]$ such that, for any $\varepsilon>0$, there exists an $A\in\Xi$ such that
$d(\by,A)<\eps$, and
\begin{gather}
\label{P3.3-1}
F(\Omega\cap B_\rho(\bx))\cap A\cap B_\rho(\by)=\emptyset;
\end{gather}
\item
\label{P3.3.2}
stationary at $(\bx,\by)$ if and only if
for any $\varepsilon>0$, there exist a $\rho\in(0,\varepsilon)$ and an $A\in\Xi$ such that $d(\by,A)<\eps\rho$, and
condition \eqref{P3.3-1} is satisfied;
\item
\label{P3.3.3}
approximately stationary at $(\bx,\by)$ if and only if,
for any $\varepsilon>0$, there exist a $\rho\in(0,\varepsilon)$,
an $A\in\Xi$, and
$(x_1,y_1),(x_2,y_2)\in B_{\varepsilon}(\bx,\by)$
such that
$d((x_1,y_1),\gph F)<\eps\rho$, $d(x_2,\Omega)<\eps\rho$,
$d(y_2,A)<\eps\rho$, and
\begin{gather*}
F(x_1+(\Omega-x_2)\cap(\rho\B_X))\cap(y_1+( A-y_2)\cap (\rho\B_Y))=\emptyset.
\end{gather*}
\end{enumerate}
\end{proposition}
\red{
\begin{remark}
Definition~\ref{D4.1} gives rather general concepts of extremality/stationarity.
In the particular case when $F$ is single-valued and $\Xi:=\{K+F(\bx)-y\mid y\in Y\}$ for some subset $K\subset Y$ containing $0$, thanks to Proposition~\ref{P3.3}\,\eqref{P3.3.1}, the extremality in the sense of Definition~\ref{D4.1} means that
there is a $\rho\in(0,+\infty]$ and a sequence $\{y_k\}\subset Y$ such that
$d(y_k,K)\to0$ as $k\to+\infty$, and
\begin{gather*}
F(x)-F(\bx)\notin (K-y_k)\cap(\rho\B)
\;\;\text{for all}\;\; x\in\Omega\cap B_\rho(\bx)\;\;\text{and}\;\;k\in\N.
\end{gather*}
Clearly, $d(y_k,K)\to0$ if $y_k\to0$, in which case the above condition becomes a constrained (on $\Omega$) localized (in the image space) version of the $(F,K)$-optimality in \cite[Definition~5.53]{Mor06.2}.
As commented in \cite[p.~70]{Mor06.2}, when $K$ is a convex cone, the latter property covers the conventional notion of local Pareto optimality as well as local weak Pareto optimality if $\Int K\ne\es$.
\end{remark}
}

The next example illustrates relations between the properties in
\red{Definition~\ref{D4.1}}.



\begin{example}
\label{e4.2}
Let $X=Y=\Omega:=\R$, $\Xi:=\{(-\infty,t]\mid t\in\R\}$, and
$F_1,F_2,F_3,F_4:\R\toto \R$ be given by
\begin{align*}
F_1(x) &:= [0,+\infty)\quad \mbox{for all}\; x\in \R,
\quad\;
F_2(x) :=
\begin{cases}
[x+1,+\infty) &\mbox{if } x< -1,\\
[0,+\infty) &\mbox{if } x\ge-1,
\end{cases}
\\
F_3(x) &:=
[-x^2,+\infty)\; \mbox{for all}\; x\in \R,\quad
F_4(x) :=
\begin{cases}
[x,+\infty) &\mbox{if } x<0,\\
[-x^2,+\infty) &\mbox{if } x\ge0.
\end{cases}
\end{align*}
Then $0\in F_i(0)$ for all $i=1,2,3,4$.
The following assertions hold true:
\begin{enumerate}
\item
$\{F_1,\R,\Xi\}$ is extremal at $(0,0)$
with $\rho=+\infty$;
\item
$\{F_2,\R,\Xi\}$ is extremal at $(0,0)$ with some $\rho\in(0,+\infty)$ but not with $\rho=+\infty$;
\item
$\{F_3,\R,\Xi\}$ is stationary but not extremal at $(0,0)$;
\item
$\{F_4,\R,\Xi\}$ is approximately stationary at $(0,0)$ but not stationary at~$(0,0)$.
\end{enumerate}
The assertions are straightforward.
We only prove assertion (iv).
Let $\eps\in(0,1)$.
Choose any $\rho\in(0,\eps)$ and $t\in(\eps\rho,\rho)$.
Then $-t\in\rho\B_{\R}$, $-t\in F_4(-t)$ and
$-t\in A$ for any $A:=(-\infty,-\eta]\in\Xi$ with $d(0,A)<\eps\rho$ (i.e., for any $\eta<\eps\rho$).
By Proposition~\ref{P3.3}\,\eqref{P3.3.2}, $\{F_4,\R,\Xi\}$ is not stationary at $(0,0)$.

Let $\varepsilon>0$.
Choose a $\rho\in(0,\min\{\eps,1\}/3)$ and points
$(x_1,y_1):=(\rho,-\rho^2)\in\gph F_4\cap(\varepsilon\B_{\R^2})$,
$x_2:=0\in\varepsilon\B_{\R}$,
$y_2:=0\in\varepsilon\B_{\R}$.
Observe that $A:=(-\infty,-3\rho^2]\in\Xi$ satisfies $d(0,A)<\eps\rho$.
Then
\begin{gather*}
F_4(x_1+(\rho\B_{\R})) = F_4(0,2\rho) = (-4\rho^2,+\infty),\\
y_1+(A-y_2)\cap (\rho\B_{\R})=-\rho^2+(-\rho,-3\rho^2]=(-\rho^2-\rho,-4\rho^2],
\end{gather*}
and consequently,
$F_4(x_1+(\rho\B_{\R}))\cap (y_1+(A-y_2)\cap (\rho\B_{\R}))=\emptyset$.
By Proposition~\ref{P3.3}\,\eqref{P3.3.3}, $\{F_4,\R,\Xi\}$ is approximately stationary at $(0,0)$.
\end{example}

The following example shows that the family of sets $\Xi$ plays an important role in determining the properties.
\begin{example}
Let $X=Y=\Omega:=\R$ and
$F:\R\toto \R$ be given by
\begin{align*}
F(x):=
\begin{cases}
\{x\sqrt{2}\} &\mbox{if } x \text{ is rational},\\
\{x\} &\mbox{otherwise}.
\end{cases}
\end{align*}
Then $0\in F(0)$.

Let $\Xi:=\{(-\infty,t]\mid t\in\R\}$ and $\eps\in(0,1)$.
Choose any $\rho\in(0,\eps)$, $(x_1,y_1)\in\gph F$, $x_2\in\R$,
$y_2\in(-\varepsilon,0]$ and $A:=(-\infty,-t]\in\Xi$ with $d(y_2,A)<\eps\rho$ (i.e., $t<\eps\rho-y_2$).
Then
$x_1+(\Omega-x_2)\cap(\rho\B_X)=B_\rho(x_1)$,
$A-y_2=(-\infty,-\tau]$, where $\tau:=y_2+t<\eps\rho$,
and consequently,
$y_1+(A-y_2)\cap(\rho\B_Y)
\supset(y_1-\rho,y_1-\eps\rho)$.
We next show that $F(B_\rho(x_1))\cap(y_1-\rho,y_1-\eps\rho)\ne\es$.
If $x_1$ is rational, then
$y_1=x_1\sqrt{2}$, and choosing a rational number $\hat x\in(x_1-\rho/\sqrt{2},x_1-\eps\rho/\sqrt{2})\subset B_\rho(x_1)$, we get
$\hat y:=\hat x\sqrt{2}\in F(\hat x) \cap(y_1-\rho,y_1-\eps\rho)$.
If $x_1$ is irrational, then
$y_1=x_1$, and choosing an irrational number $\hat x\in(x_1-\rho,x_1-\eps\rho)\subset B_\rho(x_1)$, we get
$\hat y:=\hat x\in F(\hat x)\cap (y_1-\rho,y_1-\eps\rho)$.
By Proposition~\ref{P3.3}\,\eqref{P3.3.3}, $\{F,\R,\Xi\}$ is not approximately stationary at $(0,0)$.

Let $\Xi:=\{\{-1/n\}\mid n\in\N\}$.
Since $F(\R)$ only contains irrational numbers, we have
$F(\R)\cap A=\es$ for all $A\in\Xi$.
By Proposition~\ref{P3.3}\,\eqref{P3.3.1}, $\{F,\R,\Xi\}$ is extremal at $(0,0)$ (with $\rho=+\infty$).
\end{example}

Application of Theorem~\ref{T1.4} yields necessary conditions for approximate stationarity and, hence, also stationarity and extremality.

\begin{theorem}\label{T4.1}
Let $X$ and $Y$ be Banach spaces, the sets $\Omega$, $\gph F$ and all
{members of} $\Xi$ be closed.
If the triple $\{F,\Omega,\Xi\}$ is approximately stationary at $(\bx,\by)$, then, for any $\varepsilon>0$,
there exist $(x_1,y_1)\in\gph F\cap B_{\varepsilon}(\bx,\by)$,
$x_2\in\Omega\cap B_{\varepsilon}(\bx)$,
$A\in\Xi$,
$y_2\in A\cap B_\varepsilon(\by)$,
$(x_1^*,y_1^*)\in N^C_{\gph F}(x_1,y_1)$,
$x_2^*\in N^C_\Omega(x_2)$ and
$y_2^*\in N^C_{ A}(y_2)$ such that
\begin{gather*}
\|(x_1^*,y_1^*)+(x_2^*,y_2^*)\|<\eps
\qdtx{and}
\|(x_1^*,y_1^*)\|+\|(x_2^*,y_2^*)\|=1.
\end{gather*}

If $X$ and $Y$ are Asplund, then $N^C$ in the above assertion can be replaced by $N^F$.
\end{theorem}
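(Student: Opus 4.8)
The plan is to derive Theorem~\ref{T4.1} from Theorem~\ref{T1.4} by specializing the latter to the product space $X\times Y$ (carrying, as everywhere in the paper, the maximum norm) with $n=2$, applied to the collection $\{\Xi_1,\Xi_2\}$ introduced in the text, where $\Xi_1=\{\gph F\}$ and $\Xi_2=\{\Omega\times\widetilde K\mid\widetilde K\in\Xi\}$, together with the reference sets $\Omega_1=\gph F$ and $\Omega_2=\Omega\times K$ from \eqref{Om}. The first and main task is to establish that approximate stationarity of $\{F,\Omega,K\}$ at $(\bx,\by)$ with respect to $\Xi$, in the sense of Definition~\ref{D4.1}\,\eqref{D4.1.4}, is precisely approximate stationarity of $\{\Xi_1,\Xi_2\}$ at $(\bx,\by)$ with respect to $\{\Omega_1,\Omega_2\}$, in the sense of Definition~\ref{D1.5}\,\eqref{D1.5.2}. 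This is the reformulation announced before Definition~\ref{D4.1}, and it is where the (elementary) bulk of the work lies.

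For the reformulation I would take arbitrary admissible data $\rho$, $(x_1,y_1)\in\gph F$, $x_2\in\Omega$, $y_2\in K$, $\widetilde K\in\Xi$, set $A_1:=\gph F$, $A_2:=\Omega\times\widetilde K$, and use $(x_1,y_1)\in\Omega_1$ and $(x_2,y_2)\in\Omega_2$ as the two base points. Since the product carries the maximum norm and $x_2\in\Omega$, the distances reduce to $d((x_1,y_1),A_1)=0$ and $d((x_2,y_2),A_2)=\max\{d(x_2,\Omega),d(y_2,\widetilde K)\}=d(y_2,\widetilde K)$, so the requirement $\max_i d(\cdot,A_i)<\eps\rho$ amounts exactly to $\widetilde K\in\Xi_{\eps\rho}(y_2)$. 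For the non-intersection part I would write $A_2-(x_2,y_2)=(\Omega-x_2)\times(\widetilde K-y_2)$ and $\rho\B=(\rho\B_X)\times(\rho\B_Y)$, and then substitute $x=u+x_1$, $y=v+y_1$ in a generic point $(u,v)$ of $(A_1-(x_1,y_1))\cap(A_2-(x_2,y_2))\cap(\rho\B)$; the three membership conditions become $y\in F(x)$, $x\in x_1+(\Omega-x_2)\cap(\rho\B_X)$ and $y\in y_1+(\widetilde K-y_2)\cap(\rho\B_Y)$, which shows the product-space intersection is empty if and only if the set equality in Definition~\ref{D4.1}\,\eqref{D4.1.4} holds.

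With this identification, I would check the closedness hypotheses of Theorem~\ref{T1.4}: $\Xi_1$ is the single closed set $\gph F$, and every member $\Omega\times\widetilde K$ of $\Xi_2$ is closed because $\Omega$ and each $\widetilde K\in\Xi$ are closed. Applying Theorem~\ref{T1.4} with $n=2$ and any fixed $\tau\in(0,1)$ then yields $A_1\in\Xi_1$, $A_2\in\Xi_2$, points $w_i\in A_i\cap B_\eps(\bx,\by)$ and covectors $w_i^*\in N^C_{A_i}(w_i)$ with $\|w_1^*+w_2^*\|<\eps$ and $\|w_1^*\|+\|w_2^*\|=1$. Since Theorem~\ref{T4.1} asks only for these norm conditions, I would simply discard the inequality \eqref{C3.2-3} and the auxiliary points it involves, and translate the rest: writing $A_2=\Omega\times\widetilde K$, $w_1=(x_1,y_1)$, $w_2=(x_2,y_2)$, $w_1^*=(x_1^*,y_1^*)$, $w_2^*=(x_2^*,y_2^*)$, the memberships $w_i\in A_i\cap B_\eps(\bx,\by)$ unpack (again via the maximum norm) into $(x_1,y_1)\in\gph F\cap B_\eps(\bx,\by)$, $x_2\in\Omega\cap B_\eps(\bx)$ and $y_2\in\widetilde K\cap B_\eps(\by)$. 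The one substantive ingredient is the normal-cone product rule $N^C_{\Omega\times\widetilde K}(x_2,y_2)=N^C_\Omega(x_2)\times N^C_{\widetilde K}(y_2)$, which splits $w_2^*$ into $x_2^*\in N^C_\Omega(x_2)$ and $y_2^*\in N^C_{\widetilde K}(y_2)$; the two norm conditions then carry over verbatim. The Asplund case is immediate, since Theorem~\ref{T1.4} supplies $N^F$ in place of $N^C$ and the same product rule holds for the \Fr\ normal cone.

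I do not expect a genuine obstacle: the difficulty is confined to the careful set-algebra of the reformulation and to keeping the maximum-norm product conventions consistent, together with invoking the normal-cone product rule, which is the only external fact used beyond Theorem~\ref{T1.4}.
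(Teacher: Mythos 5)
Your proposal is correct and takes essentially the same route as the paper: the paper offers no separate proof of Theorem~\ref{T4.1}, presenting it precisely as an application of Theorem~\ref{T1.4} (with $n=2$) to the families $\Xi_1=\{\gph F\}$ and $\Xi_2=\{\Omega\times\widetilde K\mid\widetilde K\in\Xi\}$ in the product space $X\times Y$, and your reformulation of Definition~\ref{D4.1}\,\eqref{D4.1.4} as Definition~\ref{D1.5}\,\eqref{D1.5.2}, the maximum-norm bookkeeping, the normal-cone product rule, and the discarding of \eqref{C3.2-3} are exactly the intended details. One minor point, shared with the paper's own wording rather than a defect of your argument: the Asplund clause of Theorem~\ref{T1.4} is invoked in the ambient space $X\times Y$, so strictly speaking both $X$ and $Y$ should be Asplund for the \Fr\ version.
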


\if{
\AK{6/08/24.
$K$ is not involved in the conclusions.
This does not seem right.
To be checked.}
\NDC{10/8/24.
I think the set $K$ appears in the `additional' condition of the dual vectors; this observation can be added as a remark.
Similar to Theorem~\ref{T1.4}, we can assume in Theorem~\ref{T4.1} that $\{F,\Omega,\Xi\}$ is approximately stationary at $(\bx,\by)$, i.e.,
for any $\varepsilon>0$, there exist a $\rho\in(0,\varepsilon)$, $(x_1,y_1),(x_2,y_2)\in B_{\varepsilon}(\bx,\by)$, and an $A\in\Xi_{\eps\rho}(y_2)$
satisfying
$\max\{d((x_1,y_1),\gph F),d(x_2,\Omega)\}<\varepsilon\rho$, and
\begin{gather*}
F(x_1+(\Omega-x_2)\cap(\rho\B_X))\cap(y_1+( A-y_2)\cap (\rho\B_Y))=\emptyset.
\end{gather*}
}
}\fi

The normalization condition $\|(x_1^*,y_1^*)\|+\|(x_2^*,y_2^*)\|=1$ in Theorem~\ref{T4.1} ensures that normal vectors $(x_1^*,y_1^*)$ to $\gph F$ remain sufficiently large when $\eps\downarrow0$, i.e., $x_1^*$ and $y_1^*$ cannot go to $0$ simultaneously.
The case when vectors $y_1^*$ are bounded away from $0$ (hence, one can assume $\|y_1^*\|=1$) is of special interest as it leads to a proper \emph{multiplier rule}.
A closer look at the alternative: either $y_1^*$ are bounded away from $0$ as $\eps\downarrow0$, or they are not (hence, vectors $x_1^*$ remain large), allows one to formulate the following consequence of Theorem~\ref{T4.1}.

\begin{corollary}
\label{C4.2}
Let $X$ and $Y$ be Banach spaces, the sets $\Omega$, $\gph F$ and all
members of $\Xi$ be closed.
If the triple $\{F,\Omega,\Xi\}$ is approximately stationary at $(\bx,\by)$, then one of the following assertions holds true:
\begin{enumerate}
\item
\label{C4.2.1}
there is an $M>0$ such that,
for any $\varepsilon>0$, there exist $(x_1,y_1)\in\gph F\cap B_{\varepsilon}(\bx,\by)$,
$x_2\in\Omega\cap B_{\varepsilon}(\bx)$, $ A\in\Xi$,
$y_2\in A\cap B_\varepsilon(\by)$,
and $y^*\in N^C_{ A}(y_2)+\eps\B_{Y^*}$
such that $\|y^*\|=1$ and
\begin{gather}
\label{C4.2-1}
0\in D^{*C}F(x_1,y_1)(y^*) +N^C_\Omega(x_2)\cap(M\B_{X^*})+\eps\B_{X^*};
\end{gather}
\item
\label{C4.2.2}
for any $\varepsilon>0$, there exist $(x_1,y_1)\in\gph F\cap B_{\varepsilon}(\bx,\by)$,
$x_2\in\Omega\cap B_{\varepsilon}(\bx)$,
$x_1^*\in D^{*C}F(x_1,y_1)(\eps\B_{Y^*})$ and $x_2^*\in N^C_\Omega(x_{2})$
such that
\sloppy
$\|x_1^*+x_2^*\|<\eps$
{and}
$\|x_1^*\|+\|x_2^*\|=1$.
\if
\NDC{20/2/24.
It looks a little bit strange to me that $\Xi$ does not involve in the statement of (ii).
I think it should be mentioned as a remark.}
\AK{21/01/24.
Part (ii) corresponds to `singular' behavior of $F$ (on $\Omega$): the case of `horizontal' normals to the graph; the $y^*$ component vanishes, and consequently, $K$ and $ A$ play no role.
Qualification conditions (e.g., Lipschitzness) are usually used to exclude singular behavior.}
\NDC{21/1/24.
Thank you for the explanation! I think Corollary~\ref{C4.2} is `similar' to \cite[Theorem~3.2]{ZheNg06} in the Asplund setting.}
\fi
\end{enumerate}

If $X$ and $Y$ are Asplund, then $N^C$ and $D^{*C}$ in the above assertions can be replaced by $N^F$ and $D^{*F}$, respectively.
\end{corollary}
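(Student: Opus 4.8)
The plan is to derive both alternatives directly from Theorem~\ref{T4.1} by a renormalization (dichotomy) argument controlled by the size of the $Y^*$-component $y_1^*$ of the normal to $\gph F$. First I would record the consequences of the norm convention: since $X\times Y$ carries the maximum norm, its dual $X^*\times Y^*$ carries the sum norm, so the normalization in Theorem~\ref{T4.1} reads $\|x_1^*\|+\|y_1^*\|+\|x_2^*\|+\|y_2^*\|=1$, while $\|(x_1^*,y_1^*)+(x_2^*,y_2^*)\|<\varepsilon$ splits into the two inequalities $\|x_1^*+x_2^*\|<\varepsilon$ and $\|y_1^*+y_2^*\|<\varepsilon$. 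I would also translate the graph normal into coderivative language via \eqref{coder}: $(x_1^*,y_1^*)\in N^C_{\gph F}(x_1,y_1)$ is exactly $x_1^*\in D^{*C}F(x_1,y_1)(-y_1^*)$.

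Next I would apply Theorem~\ref{T4.1} along a sequence $\varepsilon_k\downarrow0$, obtaining points $(x_1^k,y_1^k)$, $x_2^k$, $\widetilde K_k$, $y_2^k$ in the corresponding $B_{\varepsilon_k}$-balls and dual vectors obeying the normalization and the two closeness estimates above. The whole argument then hinges on $\liminf_k\|y_1^{*k}\|$, and the two assertions of the corollary correspond to the cases $\liminf_k\|y_1^{*k}\|>0$ and $\liminf_k\|y_1^{*k}\|=0$.

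In the regular case $\|y_1^{*k}\|\ge c>0$ (for a tail, with $c:=\tfrac12\liminf_k\|y_1^{*k}\|$), set $t_k:=\|y_1^{*k}\|\ge c$ and put $y^*:=-y_1^{*k}/t_k$ (so $\|y^*\|=1$), together with $x_1^{*k}/t_k$ and $x_2^{*k}/t_k$. Positive homogeneity of the Clarke normal cone and of $D^{*C}F$ gives $x_1^{*k}/t_k\in D^{*C}F(x_1^k,y_1^k)(y^*)$ and $x_2^{*k}/t_k\in N^C_\Omega(x_2^k)$; the normalization forces $\|x_2^{*k}\|\le1$, whence $\|x_2^{*k}/t_k\|\le1/c$ and any fixed $M>1/c$ works; the estimate $\|y_1^{*k}+y_2^{*k}\|<\varepsilon_k$ places $y^*$ in $N^C_{\widetilde K_k}(y_2^k)+(\varepsilon_k/c)\B_{Y^*}$; and $\|x_1^{*k}+x_2^{*k}\|<\varepsilon_k$ yields $0\in D^{*C}F(x_1^k,y_1^k)(y^*)+N^C_\Omega(x_2^k)\cap(M\B_{X^*})+(\varepsilon_k/c)\B_{X^*}$. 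Since $\varepsilon_k/c\to0$, choosing $k$ with $\varepsilon_k/c<\varepsilon$ yields assertion \eqref{C4.2.1} with this $M$. In the singular case $\liminf_k\|y_1^{*k}\|=0$ (passing to a subsequence with $\|y_1^{*k}\|\to0$), the closeness estimate forces $\|y_2^{*k}\|\to0$, so $s_k:=\|x_1^{*k}\|+\|x_2^{*k}\|\to1$; rescaling by $s_k$ gives $\|x_1^{*k}/s_k\|+\|x_2^{*k}/s_k\|=1$, $\|x_1^{*k}/s_k+x_2^{*k}/s_k\|<\varepsilon_k/s_k\to0$, and $x_1^{*k}/s_k\in D^{*C}F(x_1^k,y_1^k)(-y_1^{*k}/s_k)$ with $\|y_1^{*k}\|/s_k\to0$, so for $k$ large the argument $-y_1^{*k}/s_k$ lies in $\varepsilon\B_{Y^*}$; this is precisely assertion \eqref{C4.2.2}. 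The Asplund statement is inherited verbatim by replacing $N^C$ and $D^{*C}$ throughout with $N^F$ and $D^{*F}$, invoking the corresponding part of Theorem~\ref{T4.1}.

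The rescalings are routine; the one point demanding care is the passage from the sequential data (produced only for the discrete $\varepsilon_k$) to the ``for any $\varepsilon>0$'' form of each assertion — this works because each alternative is purely existential for fixed $\varepsilon$ and the relevant balls shrink, so any tail/subsequence index with $\varepsilon_k$ (resp. $\varepsilon_k/c$) below the target supplies admissible data. I expect the main obstacle to be the bookkeeping: ensuring the single vector $y^*$ simultaneously satisfies $\|y^*\|=1$, lies in $N^C_{\widetilde K}(y_2)+\varepsilon\B_{Y^*}$, and serves as the coderivative argument, while the multiplier to $\Omega$ stays within the fixed bound $M\B_{X^*}$ uniformly in $\varepsilon$.
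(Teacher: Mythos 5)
Your proposal is correct and follows essentially the same route as the paper's proof: apply Theorem~\ref{T4.1} along $\varepsilon_k\downarrow0$, split according to whether the $y^*$-component of the graph normal stays bounded away from zero, and rescale by $\|y_1^{*k}\|$ (regular case, yielding the uniform bound $M$) or by $\|x_1^{*k}\|+\|x_2^{*k}\|$ (singular case). Your $\liminf$-based dichotomy is an immaterial variant of the paper's $\limsup>\alpha$ versus $\lim=0$ split, and your explicit handling of the sum dual norm and of the coderivative argument $-y_1^{*k}/s_k$ matches (indeed slightly sharpens) the paper's bookkeeping.
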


\begin{proof}
Let the triple $\{F,\Omega,\Xi\}$ be approximately stationary at $(\bx,\by)$.
By Theorem~\ref{T4.1}, for any $j\in\N$, there exist $(x_{1j},y_{1j})\in\gph F\cap B_{1/j}(\bx,\by)$,
$x_{2j}\in\Omega\cap B_{1/j}(\bx)$,
${ A_j}\in\Xi$,
$y_{2j}\in A_j\cap B_{1/j}(\by)$,
$(x_{1j}^*,y_{1j}^*)\in N^C_{\gph F}(x_{1j},y_{1j})$,
$x_{2j}^*\in N^C_\Omega(x_{2j})$
and
$y^*_{2j}\in N^C_{ A_j}(y_{2j})$
such that $\|(x_{1j}^*,y_{1j}^*)\|+\|(x_{2j}^*,y_{2j}^*)\|=1$
and
$\|(x_{1j}^*,y_{1j}^*)+(x_{2j}^*,y_{2j}^*)\|<1/j$.
We consider two cases.
\sloppy

\emph{Case 1.}
$\limsup_{j\to+\infty}\|y_{1j}^*\|>\al>0$.
Note that $\al<1$.
Set $M:=1/\al$.
Let $\eps>0$.
Choose a number $j\in\N$ so that
$j\iv<\al\eps$
and $\|y_{1j}^*\|>\al$.
Set $x_1:=x_{1j}$, $y_1:=y_{1j}$, $x_2:=x_{2j}$, $ A:= A_j$, $y_2:=y_{2j}$, $y^*:=-y_{1j}^*/\|y_{1j}^*\|$, $x_1^*:=x_{1j}^*/\|y_{1j}^*\|$,
$x_2^*:=x_{2j}^*/\|y_{1j}^*\|$ and
$y_2^*:=y_{2j}^*/\|y_{1j}^*\|$.
Then $(x_1,y_1)\in\gph F\cap B_{\varepsilon}(\bx,\by)$,
$x_2\in\Omega\cap B_{\varepsilon}(\bx)$,
$y_2\in A\cap B_\varepsilon(\by)$,
$x_1^*\in D^{*C}F(x_1,y_1)(y^*)$,
$x_2^*\in N^C_\Omega(x_{2})$, ${\|y^*\|=1}$, $y^*_{2}\in N^C_{ A}(y_{2})$,
$\|x_2^*\|<1/\al=M$.
Furthermore,
$\|y^*-y_2^*\| =\|y_{1j}^*+y_{2j}^*\|/\|y_{1j}^*\|<1/(\al j)<\eps$, hence, $y^*\in N^C_{ A}(y_2)+\eps\B_{Y^*}$; and
$\|x_1^*+x_2^*\|=\|x_{1j}^*+x_{2j}^*\|/\|y_{1j}^*\|<1/(\al j)<\eps$, hence, condition \eqref{C4.2-1} is satisfied.
Thus, assertion \eqref{C4.2.1} holds true.

\emph{Case 2.}
$\lim_{j\to+\infty}\|y_{1j}^*\|=0$.
Then $y_{2j}^*\to0$, $x_{1j}^*+x_{2j}^*\to0$ and $1\ge\|x_{1j}^*\|+\|x_{2j}^*\|\to1$ as $j\to+\infty$.
Let $\eps>0$.
Choose a number $j\in\N$ so that
$\|x_{1j}^*\|+\|x_{2j}^*\|>0$ and $\max\{j\iv,\|y_{1j}^*\|, \|x_{1j}^*+x_{2j}^*\|/(\|x_{1j}^*\|+\|x_{2j}^*\|)\}<\eps$.
Set $x_1:=x_{1j}$, $y_1:=y_{1j}$, $x_2:=x_{2j}$, $x_1^*:=x_{1j}^*/(\|x_{1j}^*\|+\|x_{2j}^*\|)$ and $x_2^*:=x_{2j}^*/(\|x_{1j}^*\|+\|x_{2j}^*\|)$.
Then $(x_1,y_1)\in\gph F\cap B_{\varepsilon}(\bx,\by)$,
$x_2\in\Omega\cap B_{\varepsilon}(\bx)$,
$x_1^*\in D^{*C}F(x_1,y_1)(\eps\B_{Y^*})$,
$x_2^*\in N^C_\Omega(x_{2})$,
$\|x_1^*\|+\|x_2^*\|=1$,
and $\|x_1^*+x_2^*\|<\eps$.
Thus, assertion \eqref{C4.2.2} holds true.

If $X$ and $Y$ are Asplund, then
$N^C$ and $D^{*C}$ in
the above arguments can be replaced by
$N^F$ and $D^{*F}$, respectively.
\qed\end{proof}

\begin{remark}
\label{R4.3}
\begin{enumerate}
\item
Part \eqref{C4.2.1} of Corollary~\ref{C4.2} gives a kind of
\red{fuzzy}
multiplier rule with $y^*$ playing the role of the vector of multipliers.
\red{If $F$ is single-valued and Lipschitz continuous around $\bx$, then $D^{*F}(x_1,F(x_1))(y^*)=\partial^F\langle y^*,F\rangle(x_1)$ for all $y^*\in Y^*$ and all $x_1$ sufficiently close to $\bx$ (see, e.g., \cite[Theorem~1.90]{Mor06.1}).
If, additionally, $\Omega=X$, the Asplund space version of condition \eqref{C4.2-1} becomes $0\in\partial^F\langle y^*,F\rangle(x_1)+\eps\B_{X^*}$.
}

\item
\label{R4.3.2}
Part \eqref{C4.2.2} corresponds to `singular' behaviour of $F$ on $\Omega$.
It involves `horizontal' normals to the graph of $F$; the $y^*$ component vanishes, and consequently, $\Xi$ plays no role.
\end{enumerate}
\end{remark}

The following condition
is the negation of the condition in Corollary~\ref{C4.2}\,\eqref{C4.2.2}.
\begin{enumerate}
\item [ ]
\begin{enumerate}
\item [$(QC)_C$]
there is an $\varepsilon>0$ such that
$\|x_1^*+x_2^*\|\ge\eps$
for all
$(x_1,y_1)\in\gph F\cap B_{\varepsilon}(\bx,\by)$,
$x_2\in\Omega\cap B_{\varepsilon}(\bx)$,
$x_1^*\in D^{*C}F(x_1,y_1)(\eps\B_{Y^*})$ and
$x_2^*\in N^C_\Omega(x_{2}))$
such that $\|x_1^*\|+\|x_2^*\|=1$.
\end{enumerate}
\end{enumerate}
It excludes the singular behavior mentioned in Remark~\ref{R4.3}\,\eqref{R4.3.2} and serves as a qualification condition ensuring that only the condition in part \eqref{C4.2.1} of Corollary~\ref{C4.2} is possible.
We denote by $(QC)_F$ the analogue of $(QC)_C$ with $N^F$ and $D^{*F}$ in place of $N^C$ and $D^{*C}$, respectively.

\begin{corollary}
\label{C4.30}
Let $X$ and $Y$ be Banach spaces, $\Omega$, $\gph F$ and all members of $\Xi$ be closed.
Suppose that the triple $\{F,\Omega,\Xi\}$ is approximately stationary at $(\bx,\by)$.
If condition $(QC)_C$ is satisfied,
then assertion \eqref{C4.2.1} in Corollary~\ref{C4.2} holds true.

If $X$ and $Y$ are Asplund and condition $(QC)_F$ is satisfied,
then assertion \eqref{C4.2.1} in Corollary~\ref{C4.2} holds true with $N^F$ and $D^{*F}$ in place of $N^C$ and $D^{*C}$, respectively.
\end{corollary}

The next proposition provides two typical sufficient conditions for the fulfillment of conditions $(QC)_C$ and $(QC)_F$.

\begin{proposition}
\label{P4.11}
Let $X$ and $Y$ be
normed
spaces.
\begin{enumerate}
\item
\label{P3.2.1}
If $F$ has the Aubin property at $(\bx,\by)$, then $(QC)_F$ is satisfied.
If, additionally, $\dim Y<+\infty$, then $(QC)_C$ is satisfied too.
\item
\label{P3.2.2}
If $\bx\in\Int\Omega$, then both $(QC)_C$ and $(QC)_F$ are satisfied.
\end{enumerate}
\end{proposition}

\begin{proof}
\begin{enumerate}
\item
If $F$ has the Aubin property at $(\bx,\by)$, then, by Lemma~\ref{L2.3},
condition \eqref{L2.3-1} is satisfied with some $\tau>0$ and $\de>0$, and, if $\dim Y<+\infty$, then the latter condition is also satisfied with $N^C$ in place of $N^F$.
Hence, $(QC)_F$ is satisfied with $\eps:=1/(2\tau+1)$, as well as $(QC)_C$ if $\dim Y<+\infty$.
Indeed, if $(x_1,y_1)\in\gph F\cap B_\de(\bx,\by)$ and $x_1^*\in D^{*}F(x_1,y_1)(y_1^*)$ (where $D^{*}$ stands for either $D^{*C}$ or $D^{*F}$),
$x_2^*\in X^*$, $\|x_1^*\|+\|x_2^*\|=1$ and $\|y_1^*\|<\eps$, then
$\|x_1^*+x_2^*\|\ge\|x_2^*\|-\|x_1^*\|=1-2\|x_1^*\|> 1-2\tau\eps
=\eps.$
\item
If $\bx\in\Int\Omega$, then $N^C_\Omega(x_{2})=N^F_\Omega(x_{2})=\{0\}$ for all $x_{2}$ near $\bx$, and consequently,
for any normal vector $x_2^*$ to $\Omega$ at $x_2$ and any $x_1^*\in X^*$, condition $\|x_1^*\|+\|x_2^*\|=1$ yields $\|x_1^*+x_2^*\|=1$.
Hence, both $(QC)_C$ and $(QC)_F$ are satisfied with any sufficiently small~$\eps$.
\if
\NDC{6/3/24.
I think it suffices to assume in item (ii) that $X$ and $Y$ are normed spaces.}
\AK{6/3/24.
Also in item (i).
Shall we?}
\NDC{7/3/24.
I think we should.
}
\fi
\qed\end{enumerate}
\end{proof}

\red{As a consequence of Corollary~\ref{C4.2}, we obtain dual necessary conditions for the (local) Pareto optimality covering \cite[Theorems~3.1 and 4.1]{ZheNg05.2}, \cite[Corollary~3.1]{ZheNg06} and \cite[Proposition~5.1]{MorTreZhu03}.}

\begin{corollary}
\label{C3.4}
\red{Let $X$ and $Y$ be Banach spaces, $\Omega$ and $\gph F$ be closed.
If $(\bx,\by)$ is a (local) Pareto solution to  \eqref{P} with respect to a nontrivial pointed closed convex cone $K\subset Y$, then either
there is an $M>0$ such that,
for any $\varepsilon>0$, there exist $(x_1,y_1)\in\gph F\cap B_{\varepsilon}(\bx,\by)$,
$x_2\in\Omega\cap B_{\varepsilon}(\bx)$,
and $y^*\in Y^*$
such that $\langle y^*,y\rangle\ge0$ for all $y\in K$, $\|y^*\|=1$, and
\begin{gather*}
0\in D^{*C}F(x_1,y_1)(y^*+\varepsilon\B_{Y^*}) +N^C_\Omega(x_2)\cap(M\B_{X^*})+\eps\B_{X^*},
\end{gather*}
or assertion \eqref{C4.2.2} in Corollary~\ref{C4.2} holds true.
}

\red{If $X$ and $Y$ are Asplund, then $N^C$ and $D^{*C}$ in the above assertion can be replaced by $N^F$ and $D^{*F}$, respectively.}
\end{corollary}

\red{
\begin{proof}[Sketch]
Set $\Xi:=\{K+\by-y\mid y\in Y\}$ and observe that the conditions in Definition~\ref{Pareto} ensure that the triple $\{F,\Omega,\Xi\}$ is approximately stationary at $(\bx,\by)$.
Deducing the conclusion from Corollary~\ref{C4.2} requires straightforward renorming of the involved dual vectors.
\qed\end{proof}
}

\red{
\begin{remark}
In view of Proposition~\ref{P4.11}\;\eqref{P3.2.2}, if $\bx\in\Int\Omega$, then only the first alternative in Corollary~\ref{C3.4} is possible; cf. \cite[Theorems~3.1 and 4.1]{ZheNg05.2}.
\end{remark}	
}

\if{

\blue{
The next statement presents a dual characterization of the Aubin property in Asplund spaces; cf. \cite[Theorem~3.2]{Mor97}.
\begin{lemma}
Let $X$ and $Y$ be Asplund spaces, $F:X\toto Y$ with a closed graph, and $(\bx,\by)\in\gph F$.
The mapping $F$ has the Aubin property at $(\bx,\by)$ if and only if there exist $\tau>0$ and $\de>0$ such that
\begin{gather*}
\sup\{\|x^*\|\mid x^*\in D^{*F}F(x,y)(y^*)\}\le\tau\|y^*\|
\end{gather*}
for all $(x,y)\in\gph F\cap B_\de(\bx,\by)$ and $y^*\in Y^*$.
\end{lemma}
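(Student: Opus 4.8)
The plan is to prove the two implications separately, since all the real work sits in sufficiency. For \emph{necessity}, suppose $F$ has the Aubin property at $(\bx,\by)$ with modulus $\tau$. Lemma~\ref{L2.3}(i) then furnishes a $\de>0$ with $\|x^*\|\le\tau\|y^*\|$ for every $(x,y)\in\gph F\cap B_\de(\bx,\by)$ and every $(x^*,y^*)\in N^F_{\gph F}(x,y)$. Since, by definition \eqref{coder}, $x^*\in D^{*F}F(x,y)(y^*)$ means exactly $(x^*,-y^*)\in N^F_{\gph F}(x,y)$, this reads $\|x^*\|\le\tau\|y^*\|$; taking the supremum over all such $x^*$ yields the asserted inequality with the same $\tau$ and $\de$. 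Thus necessity is merely a coderivative restatement of Lemma~\ref{L2.3}, needing no Asplund assumption.

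For \emph{sufficiency}, assume the coderivative bound holds with constants $\tau$ and $\de$, fix any $\ell>\tau$, and argue by contradiction that $F$ has the Aubin property with modulus $\ell$. If it does not, I would extract sequences $x_k,x_k'\to\bx$ and $y_k\in F(x_k')$ with $y_k\to\by$ and $d(y_k,F(x_k))>\ell\|x_k-x_k'\|$; here $x_k\ne x_k'$, since otherwise $y_k\in F(x_k)$ forces $d(y_k,F(x_k))=0$. The device is to minimise the continuous, bounded-below function $(u,v)\mapsto\ell\|u-x_k\|+\|v-y_k\|$ over the closed (hence complete) set $\gph F$. Its value at the feasible point $(x_k',y_k)$ equals $\ell\|x_k-x_k'\|$, whereas at every graph point of the form $(x_k,v)$ it is at least $d(y_k,F(x_k))>\ell\|x_k-x_k'\|$. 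Applying the \EVP\ with Ekeland radius $\lambda_k:=\sqrt{\|x_k-x_k'\|}$ produces an approximate minimiser $(\hat u_k,\hat v_k)\in\gph F$ within $\lambda_k$ of $(x_k',y_k)$---hence converging to $(\bx,\by)$---at which the function perturbed by a term of coefficient $\ell\sqrt{\|x_k-x_k'\|}\to0$ attains a genuine minimum over $\gph F$; the comparison above also forces $\hat u_k\ne x_k$.

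The crucial step is to turn this exact minimality into a \Fr\ normal to $\gph F$ that violates the bound. Writing the constraint $(u,v)\in\gph F$ through the indicator $i_{\gph F}$ and invoking Fermat's rule, I would apply the fuzzy \Fr\ sum rule recalled in Section~\ref{S2} to split the (zero) subgradient into a part coming from $\ell\|\cdot-x_k\|+\|\cdot-y_k\|$ and the Ekeland term, evaluated at points $u^1$ near $\hat u_k$ and $v^1$ near $\hat v_k$, and a part $(x_k^*,-y_k^*)\in N^F_{\gph F}(u^2,v^2)$ at a nearby graph point, the two parts summing to something arbitrarily small. Because $\hat u_k\ne x_k$, the subgradient of $\ell\|\cdot-x_k\|$ has norm exactly $\ell$, so $\|x_k^*\|$ is bounded below by a quantity tending to $\ell$; the subgradient of $\|\cdot-y_k\|$ lies in $\B_{Y^*}$, so $\|y_k^*\|$ is bounded above by a quantity tending to $1$. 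Consequently $x_k^*\in D^{*F}F(u^2,v^2)(y_k^*)$ with $\|x_k^*\|>\tau\|y_k^*\|$ for all large $k$, where $(u^2,v^2)\in\gph F\cap B_\de(\bx,\by)$; this contradicts the standing bound and finishes the proof. The Asplund hypothesis is used exactly once, to license this sum rule.

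The main obstacle is the bookkeeping in this last step: the perturbation coefficient $\ell\sqrt{\|x_k-x_k'\|}$ and the fuzziness parameter of the sum rule must both be driven to $0$ fast enough that the lower bound on $\|x_k^*\|$ stays strictly above $\tau$ while the upper bound on $\|y_k^*\|$ stays near $1$, so that the strict gap $\ell-\tau>0$ survives in the limit. It is worth stressing that one does \emph{not} need the multiplier $y_k^*$ to be bounded away from $0$: the inequality $\|x_k^*\|>\tau\|y_k^*\|$ already contradicts the bound whether $\|y_k^*\|$ is near $1$ or near $0$, since $\|x_k^*\|$ is anchored near $\ell>\tau>0$. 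This is precisely why the domain displacement is weighted by $\ell$ rather than by the troublesome quantity $d(y_k,F(x_k))$, and why the observation $\hat u_k\ne x_k$ is load-bearing.
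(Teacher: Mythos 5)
Your proposal is correct, but there is no in-paper proof to measure it against: the paper does not prove this lemma, it imports it as a known dual characterization of the Aubin property, citing \cite[Theorem~3.2]{Mor97}. What your argument does is reconstruct the standard proof of that cited result, and it does so accurately. Your necessity half is exactly the intended reading: by definition \eqref{coder}, $x^*\in D^{*F}F(x,y)(y^*)$ means $(x^*,-y^*)\in N^F_{\gph F}(x,y)$, so Lemma~\ref{L2.3}(i), i.e.\ estimate \eqref{L2.3-1}, gives the bound with the same $\tau$ and some $\de$, and indeed no Asplund hypothesis is needed there (closedness of $\gph F$ is likewise only needed for sufficiency, where completeness licenses the Ekeland variational principle). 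Your sufficiency half is the classical scheme: negate \eqref{D2.10-1} with a fixed modulus $\ell>\tau$, minimize $(u,v)\mapsto\ell\|u-x_k\|+\|v-y_k\|$ over $\gph F$, apply Ekeland with radius $\sqrt{\|x_k-x_k'\|}$ (well defined and positive since $x_k\ne x_k'$, with perturbation coefficient $\ell\sqrt{\|x_k-x_k'\|}\to0$), and then Fermat's rule plus the fuzzy Fr\'echet sum rule, which is the single point where Asplundness enters. The checks you single out are genuinely the load-bearing ones and all close: the value comparison forces $\hat u_k\ne x_k$; choosing the fuzziness radius smaller than $\|\hat u_k-x_k\|$ keeps the evaluation point of $\ell\|\cdot-x_k\|$ away from $x_k$, so its subgradient has norm exactly $\ell$; hence $\|x_k^*\|\ge\ell-o(1)$ and $\|y_k^*\|\le1+o(1)$ at graph points $(u^2,v^2)$ eventually inside $B_\de(\bx,\by)$, which contradicts $\|x^*\|\le\tau\|y^*\|$ because $\ell>\tau$, regardless of whether $\|y_k^*\|$ stays near $1$ or collapses. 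Your closing remark---that weighting the domain displacement by the fixed constant $\ell$, rather than by $d(y_k,F(x_k))$, is what anchors $\|x_k^*\|$ near $\ell$ and makes any size of $y_k^*$ fatal---is precisely the right explanation of why the test function is designed this way. In short: the paper buys brevity by citation; your argument buys a self-contained proof, and it is the same proof one finds behind the citation.
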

}
\blue{
\begin{corollary}
Let $X$ and $Y$ be Asplund spaces, $\Omega$, $\gph F$ and all members of $\Xi$ be closed.
Suppose $F$ has the Aubin property at $(\bx,\by)$.
If the triple $\{F,\Omega,K\}$ is approximately stationary at $(\bx,\by)$ with respect to $\Xi$,  then  for any $\varepsilon>0$, there exist $(x_1,y_1)\in\gph F\cap B_{\varepsilon}(\bx,\by)$,
$x_2\in\Omega\cap B_{\varepsilon}(\bx)$, $ A\in\Xi$,
$y_2\in A\cap(\eps\B_Y)$, and $y^*\in N^F_{ A}(y_2)+\eps\B^*_{Y^*}$
such that $\|y^*\|=1$ and
\begin{gather*}
0\in D^{*F}F(x_1,y_1)(y^*) +N^F_\Omega(x_2)+\eps\B_{X^*}.
\end{gather*}
\end{corollary}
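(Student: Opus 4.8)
The plan is to obtain this statement as a direct specialization of Corollary~\ref{C4.30} to the situation in which the qualification condition $(QC)_F$ holds automatically. Since $X$ and $Y$ are Asplund and $F$ has the Aubin property at $(\bx,\by)$, Proposition~\ref{P4.11}\,(i) guarantees that $(QC)_F$ is satisfied. Approximate stationarity of $\{F,\Omega,K\}$ is assumed, and the sets $\Omega$, $\gph F$ and all members of $\Xi$ are closed, so every hypothesis of Corollary~\ref{C4.30} (Asplund part) is in force. Applying it, I obtain an $M>0$ such that, for every $\eps>0$, there exist $(x_1,y_1)\in\gph F\cap B_\eps(\bx,\by)$, $x_2\in\Omega\cap B_\eps(\bx)$, $\widetilde K\in\Xi$, $y_2\in\widetilde K\cap B_\eps(\by)$ and $y^*\in N^F_{\widetilde K}(y_2)+\eps\B_{Y^*}$ with $\|y^*\|=1$ and
\begin{gather*}
0\in D^{*F}F(x_1,y_1)(y^*)+N^F_\Omega(x_2)\cap(M\B_{X^*})+\eps\B_{X^*}.
\end{gather*}

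The only remaining step is to discard the auxiliary bound on the $\Omega$-normal. Because $N^F_\Omega(x_2)\cap(M\B_{X^*})\subseteq N^F_\Omega(x_2)$, enlarging the middle summand preserves the inclusion, so the displayed condition immediately yields $0\in D^{*F}F(x_1,y_1)(y^*)+N^F_\Omega(x_2)+\eps\B_{X^*}$, which is exactly the asserted multiplier rule. All the selected points, together with the normalization $\|y^*\|=1$ and the localization $y_2\in\widetilde K\cap B_\eps(\by)$ delivered by Corollary~\ref{C4.30}, carry over to the conclusion unchanged.

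It is worth recording why the constant $M$ can be suppressed, since this is the genuine role of the Aubin property. The substantive content is already packaged in Proposition~\ref{P4.11}\,(i) and Corollary~\ref{C4.30}: through the coderivative bound $\|x_1^*\|\le\tau\|y_1^*\|$ of Lemma~\ref{L2.3}, the Aubin property rules out the singular alternative in Corollary~\ref{C4.2}\,(ii) and forces the $y^*$-component to stay bounded away from zero. The same bound also makes the auxiliary constant intrinsic rather than merely existential: if $\tau$ is a modulus of the Aubin property and $\de>0$ is the radius from Lemma~\ref{L2.3}, then for $(x_1,y_1)\in\gph F\cap B_\de(\bx,\by)$ and $x_1^*\in D^{*F}F(x_1,y_1)(y^*)$ with $\|y^*\|=1$ one has $\|x_1^*\|\le\tau$; consequently any $x_2^*\in N^F_\Omega(x_2)$ realizing the inclusion satisfies $\|x_2^*\|\le\|x_1^*\|+\eps\le\tau+\eps$, so one may in fact take $M=\tau+1$. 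This provides an alternative, self-contained route bypassing Corollary~\ref{C4.30}, building the conclusion directly from the normalized estimate of Theorem~\ref{T4.1} together with the dual estimate of Lemma~\ref{L2.3}.

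The main obstacle is not in the present derivation, which reduces to the enlargement $N^F_\Omega(x_2)\cap(M\B_{X^*})\subseteq N^F_\Omega(x_2)$ applied to an already-proved corollary, but in the results it rests on: the verification that the Aubin property excludes the singular case is where the real work lives, and that has been carried out in Proposition~\ref{P4.11}\,(i) via Lemma~\ref{L2.3}. If one instead wanted a fully independent argument, the delicate point would be establishing the uniform coderivative bound $\sup\{\|x^*\|\mid x^*\in D^{*F}F(x,y)(y^*)\}\le\tau\|y^*\|$ near $(\bx,\by)$, i.e.\ the Mordukhovich criterion for the Aubin property in Asplund spaces, which underlies Lemma~\ref{L2.3}.
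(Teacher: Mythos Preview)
Your proof is correct and follows essentially the same route as the paper's: both use the coderivative estimate $\|x^*\|\le\tau\|y^*\|$ furnished by the Aubin property to rule out the singular alternative \eqref{C4.2.2} of Corollary~\ref{C4.2}, leaving only the multiplier rule in \eqref{C4.2.1}, after which the bound $M$ is discarded via the trivial enlargement $N^F_\Omega(x_2)\cap(M\B_{X^*})\subset N^F_\Omega(x_2)$. The only cosmetic difference is that you invoke the packaged results Proposition~\ref{P4.11}\,(i) and Corollary~\ref{C4.30}, whereas the paper argues directly from the coderivative bound and Corollary~\ref{C4.2}; the underlying computation is identical.
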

}
\begin{proof}
Suppose $F$ has the Aubin property at $(\bx,\by)$.
By Lemma~\ref{L4.1}, there exist $\tau>0$ and $\tau>0$ such that condition \eqref{L4.1-1} holds
for all $(x,y)\in\gph F\cap B_\de(\bx,\by)$ and $y^*\in Y^*$.
We will show that condition \ref{C4.2.2} of Corollary~\ref{C4.2} does not hold with any $\varepsilon\in(0,\min\{\de,\tau\iv\})$.
Suppose that there exist
there exist $(x_1,y_1)\in\gph F\cap B_{\varepsilon}(\bx,\by)$,
$x_2\in\Omega\cap B_{\varepsilon}(\bx)$,
and $x^*\in D^{*F}F(x_1,y_1)(\eps\B_{Y^*})$
such that $\|x^*\|=1$ and
$d(-x^*,N^F_\Omega(x_{2}))<\varepsilon$.
Then, condition \eqref{L4.1-1} implies that
 $\|x^*\|\le\tau\varepsilon<1$, which is a contradiction.
\qed\end{proof}
}\fi

The next example illustrates the verification of the necessary conditions for approximate stationarity in Corollaries~\ref{C4.2} and \ref{C4.30} for the triple $\{F_4,\Omega,\Xi\}$, where $F_4$ is defined in Example~\ref{e4.2}.

\begin{example}
\label{e4.3}
Let $X=Y=\Omega:=\R$, and $F_4$ and $\Xi$ be as in Example \ref{e4.2}.
Thus, the triple $\{F_4,\Omega,\Xi\}$ is approximately stationary at $(\bx,\by)$, and the conclusions of Corollary~\ref{C4.2} must hold true.
Moreover, the assumptions in both parts of Proposition~\ref{P4.11} are satisfied, and consequently, condition $(QC)_F$ holds true.
By Corollary~\ref{C4.30}, assertion \eqref{C4.2.1} in Corollary~\ref{C4.2} holds true with $N^F$ and $D^{*F}$ in place of $N^C$ and $D^{*C}$, respectively.
\if{
We first check condition (QC$_F$).
It is easy to see that $F_4$ has the Aubin property at every point in its graph, and one can employ Remark~\ref{R4.4}\,\ref{R4.4.1}, but we check condition (QC$_F$) directly.
For that we need to compute \Fr\ normal cones to the graph of $F_4$ at all its points near $(0,0)$.
If $(x,y)\in\Int\gph F_4$ or $(x,y)=(0,0)$, then obviously $N^F_{\gph F_4}(x,y)=\{(0,0)\}$.
If $x<0$, then $N^F_{\gph F_4}(x,x)=\{(t,-t)\mid t\ge0\}$.
If $x>0$, then $N^F_{\gph F_4}(x,-x^2)=\{(-2tx,-t)\mid t\ge0\}$.
Hence, if $(x,y)\in\gph F_4\cap (\frac12\B_{\R^2})$ and $x^*\in D^{*F}F_4(x,y)(\frac12\B_{\R})$, then $|x^*|<\frac12$, and consequently, $D^{*F}F_4(x,y)(\frac12\B_{\R})\cap\Sp_{\R}=\es$.
This verifies condition (QC$_F$).
In view of Remark~\ref{R4.3}\,\ref{R4.3.2}, assertion (\ref{C4.2.1}) of Corollary \ref{C4.2} must hold true.
}\fi
We now verify this assertion.

Let $M>0$ and $\varepsilon>0$.
Choose a $t\in(0,\min\{\eps,1\})$.
Set $(x_1,y_1):=(t/2,-t^2/4)\in\gph F_4\cap(\varepsilon\B_{\R^2})$,
$x_2:=0\in\Omega\cap{(\eps\B_{\R})}$,
$ A:=(-\infty,-t]\in\Xi$,
$y_2:=-t\in A\cap{(\eps\B_{\R})}$ and $y^*:=1$.
Thus, $N^F_\Omega(x_2)=\{0\}$, $y^*\in N^F_{ A}(y_2)$ and $D^{*F}F_4(x_1,y_1)(y^*) =\{-t\}$.
Hence,
\begin{gather*}
D^{*F}F_4(x_1,y_1)(y^*)+N^F_\Omega(x_2)\cap(M\B_{\R}) =\{-t\}\in\eps\B_{\R},
\end{gather*}
i.e., assertion \eqref{C4.2.1} in Corollary \ref{C4.2} holds true (in terms of \Fr\ normals and coderivatives).
\if
\AK{13/01/24 and 24/01/24.
This example is good for testing our own understanding of our theory, but is of little value for the paper as it is handled within the conventional setting of linear translations.
}
\NDC{14/1/24,
I think it has been shown  in \cite[Corollary~3.2]{ZheNg06} (in the setting of Asplund spaces and Pareto relations) that condition (ii) cannot happen when both $F$ and $G$ have the Aubin property.
}
\AK{15/01/24.
What is $G$?
Regularity of $\{F,\Omega\}$ is needed.
I think you can find a kind of ``normal qualification condition'' in Mordukhovich.
}
\NDC{16/1/24.
I just realized that I had cited a wrong preference.
It is fixed now.
I meant item (ii) of Corollary~\ref{C4.4} (if it is true!) cannot happen if $F$ and $G$ have the Aubin property thanks to \cite[Corollary~3.2]{ZheNg06}.
Of course, everything needs to be proved within the current framework, i.e. with the general prefernce.
}
\fi
\end{example}

\section{Abstract Level-Set Mapping}
\label{S4}

We now consider a particular case of the model in Sections~\ref{S5}
when the family $\Xi$ is determined
by an abstract
\emph{level-set mapping} $L:Y\toto Y$.
The latter mapping defines a \emph{preference} relation $\prec$ on $Y$: $v\prec y$ if and only if $v\in L(y)$; see, e.g., \cite[p.~67]{KhaTamZal15}.

Given a point $y\in Y$, we employ below the following notations:
\begin{gather}
\label{L}
L^\circ(y):=L(y)\setminus\{y\},\quad
L^-(y):=L(y)\cup\{y\}.
\end{gather}

Certain requirements are usually imposed on $L$ in order to make the corresponding preference relation meaningful
in optimization and applications; see, e.g., \cite{Zhu00,MorTreZhu03,Mor06.2,KhaTamZal15}.
In this section, we discuss the following properties of $L$ at or near the reference point $\by$:
\renewcommand\theenumi{\rm O\arabic{enumi}}
\begin{enumerate}
\item
\label{O1}
$\liminf\limits_{L^\circ(\by)\ni y\to\by} d(\by,L(y))=0$;
\item
\label{O2}
$\by\in\cl L^\circ(\by)$;
\item
\label{O3}
$\by\notin L(\by)$;
\item
\label{O4}
$y\in\cl L(y)$ for all $y$ near $\by$;
\item
\label{O5}
if $y\in L^\circ(\by)$ and $v\in\cl L(y)$, then $v\in L^\circ(\by)$;
\item
\label{O6}
if $y\in L(\by)$ and $v\in\cl L(y)$, then $v\in L(\by)$.
\end{enumerate}
\renewcommand {\theenumi} {\rm\roman{enumi}}

Some characterizations of the properties and relations between them are collected in the next proposition.

\begin{proposition}
\label{P4.10}
Let $L:Y\toto Y$, $L^\circ$ be given by \eqref{L},
\red{and $\by\in Y$}.
The following assertions hold true.
\begin{enumerate}
\item
\label{P4.10.1}
\eqref{O1} \iff
$\{y\in Y\mid d(\by,L(y))<\eps\}\cap L^\circ(\by)\cap B_\eps(\by)\ne\es$ for all $\eps>0$.
\item
\eqref{O1} \folgt \eqref{O2}.
\item
\eqref{O3} \iff
$[L(\by)=L^\circ(\by)]$.
\item
\eqref{O2} \& \eqref{O4} \folgt \eqref{O1}.
\item
\eqref{O3} \& \eqref{O4} \folgt \eqref{O2}.
\item
\eqref{O3} \folgt $[\eqref{O5} \Leftrightarrow \eqref{O6}]$.
\end{enumerate}
\end{proposition}

\begin{proof}
\begin{enumerate}
\item
\eqref{O1} \quad\iff\quad
$\inf_{y\in L^\circ(\by)\cap B_\eps(\by)} d(\by,L(y))=0$ for any $\eps>0$ \quad\folgt\quad
for any $\eps>0$, there is a $y\in L^\circ(\by)\cap B_\eps(\by)$ such that $d(\by,L(y))<\eps$.
This proves the `$\Rightarrow$' implication.
Conversely, let $\de:=\inf_{y\in L^\circ(\by)\cap B_\eps(\by)} d(\by,L(y))>0$ for some $\eps>0$.
Then $\{y\in Y\mid d(\by,L(y))<\de\}\cap L^\circ(\by)\cap B_\eps(\by)=\es$, and consequently,
$\{y\in Y\mid d(\by,L(y))<\eps'\}\cap L^\circ(\by)\cap B_{\eps'}(\by)=\es$, where $\eps':=\min\{\eps,\de\}$.
The implication `$\Leftarrow$' follows.
\item
\eqref{O1} \quad\folgt\quad
there exists a sequence $\{y_k\}\subset L^\circ(\by)$ with $y_k\to\by$ \quad\iff\quad \eqref{O2}.
\item
The assertion is a consequence of the definition of $L^\circ$ in \eqref{L}.
\item
Suppose conditions \eqref{O2} and \eqref{O4} are satisfied.
Let $\eps>0$.
Thanks to \eqref{O4}, we can
choose a $\xi\in(0,\eps)$ such that $y\in\cl L(y)$ for all $y\in B_\xi(\by)$.
If $y\in B_\xi(\by)$,
then $d(\by,L(y))=d(\by,\cl L(y))\le\|y-\by\|<\xi$.
Thus, $B_\xi(\by)\subset \{y\in Y\mid d(\by,L(y))<\xi\}$.
Thanks to \eqref{O2}, we have
$$\{y\in Y\mid d(\by,L(y))<\eps\}\cap L^\circ(\by)\cap B_\eps(\by)\supset L^\circ(\by)\cap B_\xi(\by)\ne\es.$$
Since $\eps$ is an arbitrary positive number, in view of \eqref{P4.10.1}, this proves \eqref{O1}.
\item
\eqref{O4} \quad\folgt\quad
$\by\in\cl L(\by)$.
The conclusion follows thanks to (iii).
\item
The assertion is a consequence of (iii).
\qed\end{enumerate}
\end{proof}

\begin{remark}
Properties \eqref{O4} and \eqref{O6} are components of the definition of \emph{closed preference} relation (see \cite[Definition~5.55]{Mor06.2}, \cite[p.~583]{Bao14.2}, \cite[p.~68]{KhaTamZal15}) widely used in vector and set-valued optimization.
They are called, respectively, \emph{local satiation} (around $\by$) and \emph{almost transitivity}.
Note that the latter property is actually stronger than the conventional transitivity.
It is not satisfied for the preference defined by the \emph{lexicographical order} (see \cite[Example~5.57]{Mor06.2}) and some other natural preference relations important in vector optimization and its
applications including those to welfare economics (see \cite[Sect. 15.3]{KhaTamZal15}).
Closed preference relations are additionally assumed in \cite{Mor06.2,Bao14.2,KhaTamZal15} to be \emph{nonreflexive}, thus, satisfying, in particular, property \eqref{O3}.
In view of Proposition~\ref{P4.10}, if a preference relation satisfies properties \eqref{O3}, \eqref{O4} and \eqref{O6}, it also satisfies properties \eqref{O1}, \eqref{O2} and \eqref{O5}.
In this section, we employ the weaker properties \eqref{O1} and \eqref{O5}, which are satisfied by most conventional and many other preference relations.
This makes our model applicable to a wider range of multiobjective and set-valued optimization problems compared to those studied in \cite{Mor06.2,Bao14.2,KhaTamZal15}.
\end{remark}
\if
\AK{29/01/24.
The above consideration shows the importance of property \eqref{O1} in this topic.
It looks like in \cite{MorTreZhu03} and the subsequent publications they employ \eqref{O4} together with the nonreflexivity (everywhere!) assumption to ensure \eqref{O1}.
Without the nonreflexivity, property \eqref{O2} is needed as a replacement of \eqref{O1}.
This seems to be exactly what Thao picked up some time ago when he required $\by$ to be ``not an isolated point'' in his ``Proposition 5$\_$1 amended'' note.
}
\fi

\if{
\begin{example}
Suppose $\prec$ is the preference relation in $\R^2$ defined by the \emph{lexicographical} order, i.e., $(u_1,u_2)\prec (v_1,v_2)$ if and only if either $u_1<v_1$, or $u_1=v_1$ and $u_2<v_2$.
For any $y=(y_1,y_2)\in\R^2$ and $\eps>0$, we have
$L(y)=L^\circ(y)=((-\infty,y_1)\times \R) \cup (\{y_1\} \times (-\infty,y_2))$,
and
$L^\bullet_\eps(y) =(y_1-\eps,+\infty)\times \R$.
Note that
$y\notin L(y)$.
\end{example}
}\fi
\if{
\NDC{6/1/24.
Let $\prec$ be a preference relation on $\R^2$ define by the lexicographicial order, i.e. $u:=(u_1,u_2)\prec v:=(v_1,v_2)$ if and only if `$u_1<v_1$' or `$u_1=v_1$ and
$u_2<v_2$', and let $y_0:=(1,2)\in\R^2$.
Then
$$
L(y_0)=\{(a,b)\in\R^2\mid a<1,b\in\R \}\cup\{(1,c)\in\R^2\mid c<2 \}.$$
Observe that $y_0\notin L(y_0)$.
Thus, $L^\circ(y_0)=L(y_0)$.
One has
$$
\text{cl}L(y_0)=\{(a,b)\in\R^2\mid a\le1,b\in\R \}\cup\{(1,c)\in\R^2\mid c\le2\}.$$
Thus,
$N_{\cl L(y_0)}(1,b)=\R_+\times\{0\}$
for $b>2$,
$N_{\cl L(y_0)}(a,2)=\{0\}\times \R_+$ for $a>1$, and
$N^C_{\cl L(y_0)}(1,2)=N^F_{\cl L(y_0)}(1,2)=\{(0,0)\}$.}
\AK{6/01/24.
Have I misinterpreted your example?
What does it suppose to say?}
\NDC{6/1/24.
Thank you for correcting my mistakes.
Let me think about the second question!}
\Thao{7/1/2024. I think examples are not needed at this stage because the required properties of the preference, and hence the three set-valued mappings, have not been specified. Then, any binary relation will yield all the mappings, but without the properties of interest.}
}\fi

\red{The next proposition addresses some reasonably conventional settings.
\begin{proposition}
\label{P4.02}
Let $L(y):=y-K$ for some $K\subset Y$ and all $y\in Y$.
Let $\by\in Y$.
Denote $K^\circ:=K\setminus\{0\}$, $K^-:=K\cup\{0\}$.
Suppose that $0\in\cl K^\circ$.
Then
\begin{enumerate}
\item
\label{P4.02.1}
$L^\circ(y)=y-K^\circ$ and $L^-(y)=y-K^-$ for all $y\in Y$;
\item
\label{P4.02.2}
properties \eqref{O1}, \eqref{O2} and \eqref{O4} are satisfied;
\item
\label{P4.02.3}
if $0\notin K$, then property \eqref{O3} is satisfied;
\item
\label{P4.02.4}
if $K$ is an open convex cone and $K\ne Y$, then $K^\circ=K$ and properties \eqref{O5} and \eqref{O6} are  satisfied;
\item
\label{P4.02.5}
if $K$ is a closed convex cone, then $K^-=K$ and property \eqref{O6} is  satisfied.
\end{enumerate}
\end{proposition}
}
\red{
\begin{proof}
\begin{enumerate}
\item
is obvious.
\item
Let $y_k\in L^\circ(\by)$ $(k\in\N)$ and $y_k\to\by$ as $k\to+\infty$.
By \eqref{P4.02.1}, $c_k:=\by-y_k\in K$ $(k\in\N)$ and $c_k\to0$ as $k\to+\infty$.
Then $v_k:=y_k-c_k\in L(y_k)$ $(k\in\N)$ and $v_k\to\by$ as $k\to+\infty$.
This proves \eqref{O1}.
Since $0\in\cl K^\circ$, it follows from \eqref{P4.02.1} that $\by\in\cl L^\circ(\by)$.
This proves \eqref{O2}.
By the assumption, $0\in\cl K$, and consequently, $y\in y-\cl K=\cl L(y)$ for all $y\in Y$.
Property \eqref{O4} follows.
\item
is a consequence of \eqref{P4.02.1}.
\item
Let $K$ be an open convex cone and $K\ne Y$.
Then $0\notin K$, and consequently, $K^\circ=K$.
If $y\in L^\circ(\by)$ and $v\in\cl L(y)$, then $\by-y\in K$ and $y-v\in\cl K$; hence, $\by-v\in K+\cl K=K=K^\circ$, i.e.,
$v\in L^\circ(\by)=L(\by)$.
\item
Let $K$ be a closed convex cone.
Then $0\in K$, and consequently, $K^-=K$.
If $y\in L(\by)$ and $v\in\cl L(y)$, then $\by-y\in K$ and $y-v\in K$; hence, $\by-v\in K+K=K$, i.e., $v\in L(\by)$.
\qed\end{enumerate}		
\end{proof}
}
\red{
\begin{corollary}
Let $K$ be a nontrivial open convex cone, and $L(y):=y-K$ for all $y\in Y$.
Let $\by\in Y$.
Then properties \eqref{O1}--\eqref{O6} are  satisfied.
\end{corollary}
}

The next two examples illustrate some characterizations of the level-set mapping.

\begin{example}\label{E4.3}
Let $L(y):=\{y\}$ for all $y\in Y$.
Then $L^\circ(y)=\es$.
Thus, properties \eqref{O4} and \eqref{O5} are obviously satisfied, while properties \eqref{O1} and \eqref{O2} are violated.
\end{example}

\begin{example}
\label{E4.4}
Let $L:\R^2\toto\R^2$ be defined by
\begin{gather*}
L(y_1,y_2):=
\begin{cases}
\{(v_1,v_2)\in\R^2\mid v_1<y_1,\;v_2<y_2\}&\text{if } (y_1,y_2)\ne(0,0),
\\
\{(0,0)\}&\text{otherwise}.
\end{cases}
\end{gather*}
Let $\by:=(0,0)$.
Then $L^\circ(y_1,y_2)=L(y_1,y_2)$ if $(y_1,y_2)\ne\by$ and $L^\circ(\by)=\es$.
As in Example~\ref{E4.3}, properties \eqref{O4} and \eqref{O5} are satisfied, while properties \eqref{O1} and \eqref{O2} are violated.
\end{example}

\red{Given a level-set mapping $L$, a point $\by\in Y$ and a number $\de>0$,}
we are going to employ in our model the `localized' family of sets
\begin{gather}
\label{KXi}
\Xi^\de:=\{\cl L(y)\mid y\in L^-(\by)\cap B_\de(\by)\}.
\end{gather}
Note that
members of $\Xi^\de$ are not simply translations (deformations) of the fixed set $L(\by)$ (or $L^\circ(\by)$); they are defined by sets $L(y)$ where $y$ does not have to be equal to $\by$.

\begin{remark}
Given a set $K$ containing
\red{$0$}
one can naturally define the level-set mapping by $L(y)=y
+K$ for all $y\in Y$.
Then \eqref{KXi} defines the family of perturbations as the traditional collection of translations of $\cl K$, i.e., $\Xi^\de=
\red{\{y+\by+\cl K\mid y\in K\cap(\de B)}\}
$.
\end{remark}

In the current setting, the properties in Proposition~\ref{P3.3} take the following form.

\begin{proposition}
\label{P4.1}
Let $\de>0$, and $\Xi^\de$ be given by \eqref{KXi}.
The triple $\{F,\Omega,\Xi^\de\}$ is
\begin{enumerate}
\item
\label{P4.1.2}
extremal at $(\bx,\by)$ if and only if there is a $\rho\in(0,+\infty]$ such that, for any $\varepsilon>0$, there exists a $y\in L^-(\by)\cap B_\de(\by)$ such that
$d(\by,L(y))<\eps$, and
\begin{gather}
\label{P3.14-1}
F(\Omega\cap B_\rho(\bx))\cap\cl L(y)\cap B_\rho(\by)=\emptyset;
\end{gather}
\item
stationary at $(\bx,\by)$
if and only if, for any $\varepsilon>0$, there exist a $\rho\in(0,\varepsilon)$ and a $y\in L^-(\by)\cap B_\de(\by)$ such that
$d(\by,L(y))<\eps\rho$, and condition \eqref{P3.14-1} is satisfied;

\item
approximately stationary at $(\bx,\by)$ if and only if,
for any $\varepsilon>0$, there exist a $\rho\in(0,\varepsilon)$,
a $y\in L^-(\by)\cap B_\de(\by)$, and
$(x_1,y_1),(x_2,y_2)\in B_{\varepsilon}(\bx,\by)$
such that
$d((x_1,y_1),\gph F)<\eps\rho$, $d(x_2,\Omega)<\eps\rho$,
$d(y_2,L(y))<\eps\rho$, and
\begin{gather*}
F(x_1+(\Omega-x_2)\cap(\rho\B_X))\cap(y_1+(\cl L(y)-y_2)\cap(\rho\B_Y))=\emptyset.
\end{gather*}
\end{enumerate}
\end{proposition}

The statements of Theorem~\ref{T4.1} and its corollaries can be easily adjusted to the current setting.
\if{
For instance, Corollary~\ref{C4.1} can be reformulated as follows.

\begin{corollary}
\label{C4.3}
Let $X$ and $Y$ be Banach spaces, $\Omega$ and $\gph F$ be closed, $\de>0$, and $K$ and $\Xi^\de$ be given by \eqref{KXi}.
\if{
\NDC{6/1/24.
Quite many `and'!
}
\AK{6/01/24.
True. Not sure how this issue can be fixed: the objects come in pairs.}
\NDC{6/1/24.
Now we have only three `and'.
I think it is better now.}
}\fi
If the triple $\{F,\Omega,K\}$ is approximately stationary at $(\bx,\by)$ with respect to $\Xi$,  then, for any $\varepsilon>0$, there exist $(x_1,y_1)\in\gph F\cap B_{\varepsilon}(\bx,\by)$,
$x_2\in\Omega\cap B_{\varepsilon}(\bx)$,
$y\in K\cap B_\de(\by)$,
$y_2\in\cl L(y)\cap B_\eps(\by)$, $x^*\in N^C_\Omega(x_2)$,
and
$y^*\in N^C_{\cl L(y)}(y_2)$
such that $\|(x^*,y^*)\|=1$ and
\sloppy
\begin{gather*}
d\big(-(x^*,y^*),N_{\gph F}^{C}(x_1,y_1)\big)<\eps.
\end{gather*}
If $X$ is Asplund, then $N^C$ in the above assertion can be replaced by $N^F$.
\end{corollary}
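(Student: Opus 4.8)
The plan is to deduce Corollary~\ref{C4.3} directly from Theorem~\ref{T4.1}, applied to the particular family $\Xi:=\Xi^\de$ determined by \eqref{KXi}, followed by a rescaling of the resulting dual vectors. First I would verify that Theorem~\ref{T4.1} is applicable: the sets $\Omega$ and $\gph F$ are closed by assumption, while every member of $\Xi^\de$ is of the form $\cl L(y)$ and hence closed, so the closedness hypotheses of Theorem~\ref{T4.1} are met. The key idea is to invoke the theorem not with the given $\eps$, but with an auxiliary accuracy $\eps':=\eps/(2+\eps)\in(0,\eps)$, chosen precisely so that the final distance estimate comes out below $\eps$.

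Applying Theorem~\ref{T4.1} with $\eps'$ yields points $(x_1,y_1)\in\gph F\cap B_{\eps'}(\bx,\by)$, $x_2\in\Omega\cap B_{\eps'}(\bx)$, a member $\widetilde K\in\Xi^\de$, $y_2\in\widetilde K\cap B_{\eps'}(\by)$, together with $(x_1^*,y_1^*)\in N^C_{\gph F}(x_1,y_1)$, $x_2^*\in N^C_\Omega(x_2)$ and $y_2^*\in N^C_{\widetilde K}(y_2)$ satisfying $\|(x_1^*,y_1^*)\|+\|(x_2^*,y_2^*)\|=1$ and $\|(x_1^*,y_1^*)+(x_2^*,y_2^*)\|<\eps'$. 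Note that Theorem~\ref{T4.1} already delivers the normal vectors $x_2^*$ and $y_2^*$ to the two factors separately, so no product rule for normal cones is needed. By the definition of $\Xi^\de$ in \eqref{KXi}, the set $\widetilde K$ equals $\cl L(y)$ for some $y\in K\cap B_\de(\by)$, which supplies the required $y\in K\cap B_\de(\by)$; since $\eps'<\eps$, all the selected points lie in the larger balls, in particular $y_2\in\cl L(y)\cap B_\eps(\by)$.

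Write $u^*:=(x_1^*,y_1^*)$ and $w^*:=(x_2^*,y_2^*)$. The main quantitative step is to bound $\|w^*\|$ away from zero: from $\|u^*+w^*\|<\eps'$ and the triangle inequality one gets $\|u^*\|<\|w^*\|+\eps'$, which together with $\|u^*\|+\|w^*\|=1$ yields $\|w^*\|>(1-\eps')/2>0$. Hence I may set $(x^*,y^*):=w^*/\|w^*\|$, so that $\|(x^*,y^*)\|=1$; and as $N^C_\Omega(x_2)$ and $N^C_{\cl L(y)}(y_2)$ are cones, the two components satisfy $x^*\in N^C_\Omega(x_2)$ and $y^*\in N^C_{\cl L(y)}(y_2)$, as required. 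Finally, since $N^C_{\gph F}(x_1,y_1)$ is a cone it contains $u^*/\|w^*\|$, and therefore
\begin{gather*}
d\big(-(x^*,y^*),N^C_{\gph F}(x_1,y_1)\big)\le\frac{\|u^*+w^*\|}{\|w^*\|}<\frac{2\eps'}{1-\eps'}=\eps,
\end{gather*}
where the last equality uses the choice $\eps':=\eps/(2+\eps)$. This establishes the Clarke assertion.

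For the Asplund case, Theorem~\ref{T4.1} itself permits replacing $N^C$ by $N^F$ throughout its conclusion; since \Fr\ normal cones are also convex cones, the identical rescaling argument, carried out with $N^F$ in place of $N^C$, yields the \Fr\ version and completes the proof. The only genuine obstacle is the bookkeeping in the rescaling, namely pinning down the lower bound $\|w^*\|>(1-\eps')/2$ and selecting $\eps'$ so that the factor $1/\|w^*\|$ is absorbed into the target accuracy $\eps$; everything else is a direct transcription of Theorem~\ref{T4.1} into the level-set notation of \eqref{KXi}.
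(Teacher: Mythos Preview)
Your proof is correct and follows precisely the route the paper intends: the paper does not give a separate proof of this corollary, stating only that ``the statements of Theorem~\ref{T4.1} and its corollaries can be easily adjusted to the current setting'' of~\eqref{KXi}; your argument---apply Theorem~\ref{T4.1} to $\Xi^\de$ with a shrunk accuracy $\eps'$, identify $\widetilde K=\cl L(y)$ for some $y\in K\cap B_\de(\by)$, and rescale by $\|w^*\|^{-1}$---is exactly how that adjustment is carried out. The choice $\eps'=\eps/(2+\eps)$ and the resulting estimate $2\eps'/(1-\eps')=\eps$ are clean and correct.
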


For instance, Corollary~\ref{C4.2} can be reformulated as follows.

\begin{corollary}
Let $X$ and $Y$ be Banach spaces, $\Omega$ and $\gph F$ be closed, $\de>0$, and $K$ and $\Xi^\de$ be given by \eqref{KXi}.
If the triple $\{F,\Omega,K\}$ is approximately stationary at $(\bx,\by)$ with respect to $\Xi$,  then one of the following assertions holds true:
\begin{enumerate}
\item
there is an $M>0$ such that,
for any $\varepsilon>0$, there exist $(x_1,y_1)\in\gph F\cap B_{\varepsilon}(\bx,\by)$,
$x_2\in\Omega\cap B_{\varepsilon}(\bx)$, $y\in K\cap B_\de(\by)$,
$y_2\in\cl L(y)\cap B_\eps(\by)$,
and $y^*\in N^C_{\cl L(y)}(y_2)+\eps\B_{Y^*}$
such that $\|y^*\|=1$ and condition \eqref{C4.2-1} holds true;
\item
for any $\varepsilon>0$, there exist $(x_1,y_1)\in\gph F\cap B_{\varepsilon}(\bx,\by)$,
$x_2\in\Omega\cap B_{\varepsilon}(\bx)$,
and $x^*\in D^{*C}F(x_1,y_1)(\eps\B_{Y^*})$
such that $\|x^*\|=1$ and
$d(-x^*,N^C_\Omega(x_{2}))<\eps$.
\end{enumerate}
If $X$ is Asplund, then $N^C$ and $D^{*C}$ in the above assertions can be replaced by $N^F$ and $D^{*F}$, respectively.
\end{corollary}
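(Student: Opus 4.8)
The plan is to derive this corollary as a direct specialization of Corollary~\ref{C4.2} to the family of perturbations $\Xi:=\Xi^\de$ defined in \eqref{KXi}, followed by a routine renormalization in the singular case. First I would verify that the hypotheses of Corollary~\ref{C4.2} are met: $X$ and $Y$ are Banach, $\Omega$ and $\gph F$ are closed by assumption, and every member of $\Xi^\de$ is of the form $\cl L(y)$ with $y\in K\cap B_\de(\by)$, hence closed as a closure. Thus Corollary~\ref{C4.2} applies and yields the dichotomy \eqref{C4.2.1}/\eqref{C4.2.2} in its original form, and it remains only to rewrite each alternative in the present notation.

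For the multiplier alternative the change is purely notational. The set $\widetilde K\in\Xi^\de$ produced by Corollary~\ref{C4.2}\,\eqref{C4.2.1} equals $\cl L(y)$ for some $y\in K\cap B_\de(\by)$ by the very definition of $\Xi^\de$, the point $y_2$ then lies in $\widetilde K\cap B_\eps(\by)=\cl L(y)\cap B_\eps(\by)$, and $N^C_{\widetilde K}(y_2)=N^C_{\cl L(y)}(y_2)$. Since condition \eqref{C4.2-1} is left untouched, assertion (i) of the present corollary follows verbatim.

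The only point requiring work is recasting the singular alternative. Here I would apply Corollary~\ref{C4.2}\,\eqref{C4.2.2} with an auxiliary parameter $\eta\in(0,1)$ to obtain $(x_1,y_1)$, $x_2$, $x_1^*\in D^{*C}F(x_1,y_1)(\eta\B_{Y^*})$ and $x_2^*\in N^C_\Omega(x_2)$ with $\|x_1^*\|+\|x_2^*\|=1$ and $\|x_1^*+x_2^*\|<\eta$. From these one gets $\|x_1^*\|>(1-\eta)/2>0$, so I may set $x^*:=x_1^*/\|x_1^*\|$, which has $\|x^*\|=1$. Using that $N^C_{\gph F}$ and $N^C_\Omega$ are cones (equivalently, positive homogeneity of the coderivative), it follows that $x^*\in D^{*C}F(x_1,y_1)\big((2\eta/(1-\eta))\B_{Y^*}\big)$, while $x_2^*/\|x_1^*\|\in N^C_\Omega(x_2)$ gives $d(-x^*,N^C_\Omega(x_2))\le\|x_1^*+x_2^*\|/\|x_1^*\|<2\eta/(1-\eta)$. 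Choosing $\eta$ small enough that $2\eta/(1-\eta)<\eps$ and $\eta\le\eps$ then produces exactly assertion (ii) of the present corollary.

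I expect the renormalization bookkeeping in the singular case to be the only mildly delicate step, specifically the lower bound $\|x_1^*\|>(1-\eta)/2$ that guarantees $x_1^*\ne0$ and simultaneously controls both the coderivative argument and the distance estimate after dividing by $\|x_1^*\|$; everything else is a transcription of Corollary~\ref{C4.2}. Finally, for the Asplund case I would repeat the same argument with $N^F$ and $D^{*F}$ in place of $N^C$ and $D^{*C}$, which is legitimate because Corollary~\ref{C4.2} already supplies the \Fr\ version of the dichotomy and the cone and positive-homogeneity properties used in the renormalization hold equally for the \Fr\ normal cone and coderivative.
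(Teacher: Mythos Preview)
Your proposal is correct and follows precisely the route the paper indicates: the paper does not give a separate proof of this statement but simply remarks that ``the statements of Theorem~\ref{T4.1} and its corollaries can be easily adjusted to the current setting,'' and your argument supplies exactly that adjustment by specializing Corollary~\ref{C4.2} to $\Xi=\Xi^\de$. The only place where anything beyond transcription is needed is part~(ii), where the normalization $\|x_1^*\|+\|x_2^*\|=1$ of Corollary~\ref{C4.2}\,\eqref{C4.2.2} must be converted to $\|x^*\|=1$; your renormalization via the lower bound $\|x_1^*\|>(1-\eta)/2$ and the cone property of $N^C_{\gph F}$ and $N^C_\Omega$ is the natural way to do this and is carried out correctly.
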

}\fi
For instance, Corollary~\ref{C4.30} can be reformulated as follows.

\begin{corollary}
\label{C4.40}
Let $X$ and $Y$ be Banach spaces, $\Omega$ and $\gph F$ be closed, $\bx\in\Omega$, $\by\in F(\bx)$, $\de>0$, and $\Xi^\de$ be given by \eqref{KXi}.
Suppose condition $(QC)_C$ is satisfied.
If the triple $\{F,\Omega,\Xi^\de\}$ is approximately stationary at $(\bx,\by)$,
then
there is an $M>0$ such that,
for any $\varepsilon>0$, there exist $(x_1,y_1)\in\gph F\cap B_{\varepsilon}(\bx,\by)$,
$x_2\in\Omega\cap B_{\varepsilon}(\bx)$,
$y\in L^-(\by)\cap B_\de(\by)$,
$y_2\in\cl L(y)\cap B_\eps(\by)$,
and $y^*\in N^C_{\cl L(y)}(y_2)+\eps\B_{Y^*}$
such that $\|y^*\|=1$, and condition \eqref{C4.2-1} holds true.
\sloppy

If $X$ is Asplund and condition $(QC)_F$ is satisfied,
then the above assertion holds true with $N^F$ and $D^{*F}$ in place of $N^C$ and $D^{*C}$, respectively.
\end{corollary}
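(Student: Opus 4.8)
The plan is to recognise Corollary~\ref{C4.40} as nothing more than the specialisation of Corollary~\ref{C4.30} to the concrete family $\Xi^\de$ introduced in \eqref{KXi}. So the strategy is to check that $\Xi:=\Xi^\de$ meets the standing hypotheses of Corollary~\ref{C4.30}, invoke that corollary verbatim, and then rewrite its conclusion by unfolding the explicit description of the members of $\Xi^\de$.

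First I would observe that every member of $\Xi^\de$ has the form $\cl L(y)$ for some $y\in K\cap B_\de(\by)$, and is therefore closed by construction. Together with the assumed closedness of $\Omega$ and $\gph F$, this supplies exactly the closedness requirement in Corollary~\ref{C4.30}. The remaining hypotheses — approximate stationarity of $\{F,\Omega,K\}$ at $(\bx,\by)$ (with respect to $\Xi^\de$) and condition $(QC)_C$ — are assumed outright, so no further verification is needed.

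Next I would apply Corollary~\ref{C4.30} with $\Xi:=\Xi^\de$. This yields an $M>0$ such that, for every $\eps>0$, there exist $(x_1,y_1)\in\gph F\cap B_\eps(\bx,\by)$, $x_2\in\Omega\cap B_\eps(\bx)$, some $\widetilde K\in\Xi^\de$, a point $y_2\in\widetilde K\cap B_\eps(\by)$ and $y^*\in N^C_{\widetilde K}(y_2)+\eps\B_{Y^*}$ with $\|y^*\|=1$ satisfying condition \eqref{C4.2-1}. The only remaining task is translation: since $\widetilde K\in\Xi^\de$, there is a $y\in K\cap B_\de(\by)$ with $\widetilde K=\cl L(y)$, whence $y_2\in\cl L(y)\cap B_\eps(\by)$ and $y^*\in N^C_{\cl L(y)}(y_2)+\eps\B_{Y^*}$. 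Substituting these identifications reproduces precisely the asserted conclusion. The Asplund case follows identically, replacing $N^C$ and $D^{*C}$ by $N^F$ and $D^{*F}$ throughout and invoking the Asplund part of Corollary~\ref{C4.30} under condition $(QC)_F$.

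The argument carries essentially no analytic difficulty, since all the real work was already done in Corollary~\ref{C4.30}; the one point that demands care is the bookkeeping of the two radii. The parameter $y$ indexing the chosen member $\cl L(y)$ lives in the ball $B_\de(\by)$ (inherited from the very definition of $\Xi^\de$), while the companion point $y_2$ lives in the smaller ball $B_\eps(\by)$. Keeping these two scales distinct, and correctly matching the generic element $\widetilde K\in\Xi^\de$ with the pair $(y,\cl L(y))$, is the only place where a careless identification could go astray.
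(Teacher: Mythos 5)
Your proof is correct and matches the paper's own treatment: the paper presents Corollary~\ref{C4.40} precisely as a reformulation of Corollary~\ref{C4.30} for the family $\Xi^\de$ from \eqref{KXi}, which is exactly your specialization, including the observation that each member $\cl L(y)$ is closed and the identification of $\widetilde K\in\Xi^\de$ with a pair $(y,\cl L(y))$, $y\in K\cap B_\de(\by)$.
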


The
properties in Definition~\ref{D4.1} are rather general.
They cover various
optimality and stationarity concepts in vector and set-valued optimization.
With $\Omega$, $F$ and $L$ as above, and points $\bx\in\Omega$ and $\by\in F(\bx)$, the next definition seems reasonable.

\begin{definition}
\label{D4.2}
The point $(\bx,\by)$ is
\emph{extremal} for
$F$ on $\Omega$ if there is a $\rho\in(0,+\infty]$
such that
\begin{align}
\label{LE}
F(\Omega\cap B_\rho(\bx))\cap L^\circ(\by)\cap B_\rho(\by)=\es.
\end{align}
\end{definition}

Definition~\ref{D4.2} covers both local ($\rho<+\infty$) and global ($\rho=+\infty$) extremality.
The above concept is applicable, in particular, to
solutions of the set-valued minimization problem~\eqref{P},
\red{and the conventional Pareto optimality implies the extremality in the sense of Definition~\ref{D4.2}.
Indeed,
if $(\bx,\by)$ is a (local) Pareto solution to \eqref{P} with respect to a nontrivial pointed convex cone $K\subset Y$, then, by Definition~\ref{Pareto}, there is a $\rho\in(0,+\infty]$
such that $F(\Omega\cap B_\rho(\bx))\cap L^\circ(\by)=\es$, where $L^\circ(\by):=(\by-K)\setminus\{\by\}$.
The latter condition obviously implies condition \eqref{LE}.
Hence, $(\bx,\by)$ is an extremal point for
$F$ on $\Omega$.}

\begin{remark}
\label{R4_local_EX}
\begin{enumerate}
\item
The concept in Definition~\ref{D4.2} is broader than just (local) minimality as $F$ is not assumed to be an objective mapping of an optimization problem. It can, for instance, be involved in modeling constraints.
\item
The property in Definition~\ref{D4.2} is similar to the one in the definition of \emph{fully localized minimizer} in \cite[Definition~3.1]{BaoMor10} (see also \cite[p. 68]{KhaTamZal15}).
The latter definition uses the larger set $\cl L(\by)\setminus\{\by\}$
in place of $L^\circ(\by)$ in \eqref{LE}.
Unlike many solution concepts in vector optimization, the above definition involves ``image localization'' (hence, is in general weaker).
It has proved to be useful when studying locally optimal allocations of welfare economics; cf. \cite{BaoMor10,KhaTamZal15}.
\end{enumerate}
\end{remark}
\if
\Thao{30/12/2023. Should it be `locally optimal' instead of `locally extremal'?}
\AK{30/12/23.
Possibly.
My original reasoning is in the comment below.
Problem \eqref{P} was added later.}
\fi
\if
\AK{24/01/24.
After having a look at \cite[p. 68]{KhaTamZal15}, I am now thinking about restoring $L^\circ(\by)$ instead of $\by+L^\circ(0)$ and working with ``fully localized minimizers''.}
\AK{28/11/23.
`Local extremality' in the above definition is broader than just local minimality in the above problem.
Besides the objective mapping, it applies also to constraint mappings.
}
\fi

We next show that, under some mild assumptions on the level-set mapping $L$, the extremality in the sense of Definition~\ref{D4.2}
can be treated in the framework of the extremality in the sense of Definition~\ref{D4.1} (or its characterization in Proposition~\ref{P4.1}\,\eqref{P4.1.2}).

\begin{proposition}
\label{P4.2}
Let $\bx\in\Omega$, $\by\in F(\bx)$, $\de>0$, and $\Xi^\de$ be given by \eqref{KXi}.
Suppose $L$ satisfies conditions \eqref{O1} and \eqref{O5}.
If $(\bx,\by)$ is extremal for
$F$ on $\Omega$, then the triple $\{F,\Omega,\Xi^\de\}$ is extremal at $(\bx,\by)$.
\end{proposition}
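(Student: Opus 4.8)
The plan is to verify the $L$-form characterization of extremality with respect to $\Xi^\de$ given in Proposition~\ref{P4.1}\,\eqref{P4.1.2}, reusing the very radius supplied by Definition~\ref{D4.2} and letting the two hypotheses on $L$ play complementary roles: \eqref{O1} manufactures admissible perturbation parameters $y$ arbitrarily close to $\by$, while \eqref{O5} forces each perturbed set $\cl L(y)$ to lie inside $L^\circ(\by)$, so that the perturbed non-intersection collapses to the one already granted.

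Concretely, I would first invoke the extremality of $(\bx,\by)$ for $F$ on $\Omega$ to fix a radius $\rho\in(0,+\infty]$ with $F(\Omega\cap B_\rho(\bx))\cap L^\circ(\by)\cap B_\rho(\by)=\es$, and keep this same $\rho$ for the conclusion. Next, fixing an arbitrary $\eps>0$, I would apply \eqref{O1} together with the equivalence in Proposition~\ref{P4.10}\,\eqref{P4.10.1}: for this $\eps$ and the given $\de$ it yields $L^\bullet_{\eps}(\by)\cap L^\circ(\by)\cap B_\de(\by)\ne\es$, so I may pick $y$ in this set. Since $L^\circ(\by)\subseteq K$, this $y$ is admissible, i.e.\ $\cl L(y)\in\Xi^\de_\eps(\by)$, and $y$ meets precisely the membership constraints demanded by Proposition~\ref{P4.1}\,\eqref{P4.1.2}.

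The decisive step is the non-intersection at radius $\rho$ for this $y$. As $y\in L^\circ(\by)$, condition \eqref{O5} gives $\cl L(y)\subseteq L^\circ(\by)$, whence
\[
F(\Omega\cap B_\rho(\bx))\cap\cl L(y)\cap B_\rho(\by)\subseteq F(\Omega\cap B_\rho(\bx))\cap L^\circ(\by)\cap B_\rho(\by)=\es,
\]
the last equality being the extremality of $(\bx,\by)$ for $F$ on $\Omega$. This is exactly the non-intersection required in Proposition~\ref{P4.1}\,\eqref{P4.1.2} with the common $\rho$, and hence $\{F,\Omega,K\}$ is extremal at $(\bx,\by)$ with respect to $\Xi^\de$.

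I do not expect a genuine obstacle: the argument is short once the ingredients are lined up, and the only points needing care are bookkeeping. One must read off $\cl L(y)\subseteq L^\circ(\by)$ correctly from \eqref{O5} (this is what reduces the perturbed intersection to the given one), and one must take $y$ from $L^\circ(\by)$ rather than merely from $K$, so that \eqref{O5} applies. Reusing a single $\rho$ throughout disposes of the case $\rho=+\infty$ at essentially no cost: there the empty intersection reads $F(\Omega)\cap L^\circ(\by)=\es$, which forces the same empty intersection at every finite radius, so one may simply pass to a finite $\rho$ in the characterization.
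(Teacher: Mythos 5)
Your proof is correct and is essentially the paper's argument: both rest on Proposition~\ref{P4.10}\,\eqref{P4.10.1} to extract, via \eqref{O1}, a point $y\in L^\bullet_{\eps}(\by)\cap L^\circ(\by)\cap B_\de(\by)$, and on \eqref{O5} (read as $\cl L(y)\subset L^\circ(\by)$ for $y\in L^\circ(\by)$) to reduce the perturbed non-intersection to the one in Definition~\ref{D4.2}. The only cosmetic difference is that the paper argues by contraposition (assuming non-extremality with respect to $\Xi^\de$ and producing a point $\hat y\in F(\Omega\cap B_\rho(\bx))\cap L^\circ(\by)\cap B_\rho(\by)$), whereas you verify the characterization directly with the same radius $\rho$.
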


\begin{proof}
In view of \eqref{O1}, it follows from Proposition~\ref{P4.10}\,\eqref{P4.10.1} that
\begin{gather}
\label{P4.2P1}
\{y\in Y\mid d(\by,L(y))<\eps\}\cap L^\circ(\by)\cap B_{\eps}(\by)\ne\es
\;\;\text{for all}\;\;
\eps>0.
\end{gather}
Suppose $\{F,\Omega,\Xi^\de\}$ is not extremal at $(\bx,\by)$.
Let $\rho\in(0,+\infty]$.
By Proposition~\ref{P4.1}\,\eqref{P4.1.2}, there exists an $\eps>0$ such that, for any $y\in L^-(\by)\cap B_\de(\by)$ with
$d(\by,L(y))<\eps$, it holds
\begin{gather}
\label{P4.2P2}
F(\Omega\cap B_\rho(\bx))\cap\cl L(y)\cap B_\rho(\by)\ne\es.
\end{gather}
In view of \eqref{P4.2P1}, there is a point $y\in L^\circ(\by)\cap B_\de(\by)\subset L^-(\by)\cap B_\de(\by)$ with
$d(\by,L(y))<\eps$, and we can choose a point
$\hat y$ belonging to the set in \eqref{P4.2P2}.
Thus, $y\in L^\circ(\by)$ and $\hat y\in\cl L(y)$.
Thanks to \eqref{O5}, we have $\hat y\in L^\circ(\by)$, and consequently, $\hat y\in F(\Omega\cap B_\rho(\bx))\cap L^\circ(\by)\cap B_\rho(\by)$.
Since $\rho\in(0,+\infty]$ is arbitrary, $(\bx,\by)$ is not
extremal for
$F$ on~$\Omega$.
\qed\end{proof}
\if{
\AK{28/11/23 and 16/12/23.
I am using `localized' properties of the preference relation, i.e., exactly what is needed for this proposition.
\\
Some discussion of set-valued optimization and preferences as well as comparison with the existing definitions should be added.
I think the properties above a weaker than the corresponding ones in \cite[Definition~5.55]{Mor06.2}, aren't they?
Still, strong arguments are needed to show that these properties are natural and useful.
}
\NDC{14/1/24.
Condition (i) can be rewritten as: `there exists a $y\in L^\circ(0)$ such that $y\in L^\bullet_{\eps}(0)$'.
The corresponding condition in \cite[Definition~5.55]{Mor06.2} is:
`$y\in$cl$L(y)$ for all $y$ near $0$'
(they required `for all $y$ near $\by$', but in the current notation, I think it should be $0$ in place of $\by$).
This condition is equivalent to: `$y\in L^\bullet_{\eps}(0)$ for all $\varepsilon>0$ and $y$ near 0'.
Hence, I think condition (i) in Proposition~\ref{P4.2} is easier to check than the one in \cite[Definition~5.55]{Mor06.2} in the sense that it requires `the existence of a $y$' instead of `for all
$y$'.
Still, I have not been able to prove that the condition in Proposition~\ref{P4.2} is indeed weaker than the other one.
}
\AK{27/01/24.
These days I have been trying to check and recheck, and make sense of condition (i) as well as compare it with the condition
$y\in$cl$L(y)$ in \cite[Definition~5.55]{Mor06.2}.
I have constructed an example showing that the former is NOT stronger that the latter.

More importantly, it looks like condition (i) is the right one for the purpose.
At the same time, I am getting suspicious about the theory developed in \cite{MorTreZhu03} and then copied in \cite{Mor06.2,Bao14.2} and possibly other publications.
To apply their ``extended extremal principle" in vector or set-valued optimization, they need to prove an analogue of our Proposition~\ref{P4.2}.
I have only found a short sketch in \cite[Example~3.5]{MorTreZhu03}.
It lacks details and does not look convincing.
In \cite[Proposition~2]{Bao14.2}, Bao refers to a non-existing Example~5.56 in \cite{Mor06.2}.
Correct me please if my suspicions are unjustified.}

\NDC{28/1/24.
Let me have a closer look at the proof of \cite[Example~3.5]{MorTreZhu03}.
I think condition (i) in the above example appeared for the first time in \cite[Definition~3.1]{Zhu00}.
I think it is a typo in \cite{Bao14.2}.
I think Bao refered to Example 3.65.
}
}\fi

Thanks to Proposition~\ref{P4.2}, if the level-set mapping $L$ satisfies conditions \eqref{O1} and \eqref{O5},
then extremal points of problem~\eqref{P} satisfy the necessary conditions in Theorem~\ref{T4.1} and its corollaries.
\red{In particular, the next statement holds true.}

\begin{corollary}
\red{Let $X$ and $Y$ be Banach spaces, $\Omega$ and $\gph F$ be closed, $\bx\in\Omega$, $\by\in F(\bx)$, and $\de>0$.
Suppose that condition $(QC)_C$ is satisfied as well as conditions \eqref{O1} and \eqref{O5} for some mapping $L:Y\toto Y$.
If $(\bx,\by)$ is extremal for
$F$ on $\Omega$, then
there is an $M>0$ such that,
for any $\varepsilon>0$, there exist $(x_1,y_1)\in\gph F\cap B_{\varepsilon}(\bx,\by)$,
$x_2\in\Omega\cap B_{\varepsilon}(\bx)$,
$y\in L^-(\by)\cap B_\de(\by)$,
$y_2\in\cl L(y)\cap B_\eps(\by)$,
and $y^*\in N^C_{\cl L(y)}(y_2)+\eps\B_{Y^*}$
such that $\|y^*\|=1$, and condition \eqref{C4.2-1} holds true.}
\sloppy

\red{If $X$ is Asplund and condition $(QC)_F$ is satisfied,
then the above assertion holds true with $N^F$ and $D^{*F}$ in place of $N^C$ and $D^{*C}$, respectively.}
\end{corollary}

\if{
\AK{5/03/24.
At the moment, we have no applications of the ``level-set mapping'' theory here and in the next section.
What is it for?
Do we improve anything in the literature?
}

Next, we briefly discuss a seemingly more general constrained set-valued minimization problem:
\begin{gather}
\label{P1}
\tag{$\mathcal{P}$}
\text{minimize }\;F(x)\quad \text{subject to }\; G(x)\cap K\ne\es,\; x\in\Omega,
\end{gather}
where $F:X\toto Y$ and $G:X\toto Z$ are mappings between normed spaces, $\Omega\subset X$, $K\subset Z$, and $Y$ is equipped with a level-set mapping $L$.

\if{
\NDC{10/1/24.
Could it be another letter, for instance $(Q)$, instead of $(P_1)$ since we do not have $(P_2)$?
}
\AK{11/01/24.
I am not sure.
$P$ is common notation because it comes from ``Problem".
How to explain $Q$?
This could be misleading for some readers.
However, my objections are not very strong.}
\NDC{11/1/24.
How about $(\hat P)$?
$(\hat P)$ is better than $(\widehat P)$.}
\Thao{12/1/2024. How about $(GP)$ indicating `General version of $(P)$'? I am ok with any of your suggestions here :-).}
}\fi

Depending on the choice of the space $Z$ and subset $K$, the constraint $G(x)\cap K\ne\es$ can model a system of equalities and inequalities as well as more general operator-type constraints.
Of course, problem \eqref{P1} can be treated as a particular case of problem \eqref{P} with the set of \emph{admissible solutions} $\widehat\Omega:=\{x\in\Omega\mid G(x)\cap K\ne\es\}$ in place of $\Omega$.

If $(\bx,\by)$ is locally extremal for $F$ on $\widehat\Omega$, then $\bx\in\Omega$, $\by\in F(\bx)$,
and there exists a $\bz\in G(\bx)\cap K$.
If in addition the level-set mapping $L$ satisfies conditions \eqref{O1} and \eqref{O5}, then, by Proposition~\ref{P4.2}, $\{F,\widehat\Omega,L(\by)\cup\{\by\}\}$ is locally extremal at $(\bx,\by)$ with respect to $\{\cl L(y)\mid y\in Y\}$.
By Proposition~\ref{P4.1}, taking into account definition \eqref{Lbul}, this means that {there is a $\rho>0$ such that}, for any $\varepsilon>0$, there exists a $y\in Y$ with $d(\by,L(y))<\eps$ satisfying $F(\widehat\Omega\cap B_\rho(\bx))\cap\cl L(y)\cap B_\rho(\by)=\emptyset$,
or equivalently,
\begin{gather*}
\widehat F(\Omega\cap B_\rho(\bx))\cap((\cl L(y)\cap B_\rho(\by))\times K)=\emptyset,
\end{gather*}
where $\widehat F:=(F,G):X\toto Y\times Z$.
Note that $(\by,\bz)\in\widehat K$, where $\widehat K:=(L(\by)\cup\{\by\})\times K$, and $d((\by,\bz),\cl L(y)\times K)=d(\by,L(y))<\eps$.
By Definition~\ref{D4.1}, this shows that the triple $\{\widehat F,\Omega,\widehat K\}$ is locally extremal at $(\bx,(\by,\bz))$ with respect to $\widehat\Xi:=\{\cl L(y)\times K\mid y\in Y\}$.
Hence, locally extremal points of problem~\eqref{P1} satisfy the necessary conditions in Theorem~\ref{T4.1} and its corollaries with $\widehat F$, $\widehat K$ and $\widehat\Xi$ in place of $F$, $K$ and $\Xi$, respectively.
For instance, application of Corollary~\ref{C4.2} yields the following theorem of alternative.

\begin{theorem}
\label{T4.2}
Let $X$, $Y$ and $Z$ be Banach spaces, $\Omega$, $K$, $\gph F$ and $\gph G$ be closed.
If $(\bx,\by)$ is locally extremal in problem \eqref{P1}, and $\bz\in G(\bx)\cap K$, then one of the following assertions holds true:
\begin{enumerate}
\item
\label{T4.2.1}
there is an $M>0$ such that,
for any $\varepsilon>0$, there exist $(x_1,y_1,z_1)\in\gph (F,G)\cap B_{\varepsilon}(\bx,\by,\bz)$, $x_2\in\Omega\cap B_{\varepsilon}(\bx)$,
$y\in Y$,
$y_2\in\cl{L(y)}\cap B_\eps(\by)$,
$z_2\in K {\cap B_{\varepsilon}(\bz)}$,
$(y^*,z^*)\in N^C_{\cl L(y)\times K}(y_2,z_2)+\eps\B_{Y^*\times Z^*}$
such that $\|(y^*,z^*)\|=1$
and
\begin{gather*}
0\in D^{*C}(F,G)(x_1,y_1,z_1)(y^*,z^*) +N^C_\Omega(x_2)\cap(M\B_{X^*})+\eps\B_{X^*};
\end{gather*}
\if{
\NDC{12/1/24.
Should it be written as
$$0\in D^{*C}(F,G)(x_1,y_1,z_1)(B_\varepsilon(y^*,z^*))+N^C_\Omega(x_2)+\eps\B_{X^*}?$$
}}\fi
\item
\label{T4.2.2}
for any $\varepsilon>0$, there exist $(x_1,y_1,z_1)\in\gph (F,G)\cap B_{\varepsilon}(\bx,\by,\bz)$,
$x_2\in\Omega\cap B_{\varepsilon}(\bx)$,
and $x^*\in D^{*C}(F,G)(x_1,y_1,z_1) (\eps\B_{{Y^* \times Z^*}})$
such that $\|x^*\|=1$ and $d\left(-x^*,N^C_\Omega(x_2)\right)<\eps$.
\end{enumerate}
If $X$ is Asplund, then $N^C$ and $D^{*C}$ in the above assertions can be replaced by $N^F$ and $D^{*F}$, respectively.
\end{theorem}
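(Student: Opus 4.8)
The plan is to deduce Theorem~\ref{T4.2} from Corollary~\ref{C4.2} by first folding the constraint $G(x)\cap K\ne\es$ into the feasible set and then lifting the resulting model to the product space $Y\times Z$. First I would put $\widehat\Omega:=\{x\in\Omega\mid G(x)\cap K\ne\es\}$, so that problem~\eqref{P1} becomes problem~\eqref{P} with $\widehat\Omega$ in place of $\Omega$; local extremality of $(\bx,\by)$ in~\eqref{P1} then means exactly that $(\bx,\by)$ is extremal for $F$ on $\widehat\Omega$ in the sense of Definition~\ref{D4.2}, and in particular $\bx\in\Omega$, $\by\in F(\bx)$ and the given $\bz\in G(\bx)\cap K$ exists. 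Assuming, as required by Proposition~\ref{P4.2}, that $L$ satisfies \eqref{O1} and \eqref{O5}, that proposition yields that $\{F,\widehat\Omega,L(\by)\cup\{\by\}\}$ is extremal at $(\bx,\by)$ with respect to $\Xi^\de$, hence also with respect to the larger family $\{\cl L(y)\mid y\in Y\}$. By Proposition~\ref{P4.1}\,\eqref{P4.1.2} together with \eqref{Lbul}, this says that there is a $\rho>0$ such that for every $\eps>0$ some $y$ with $d(\by,L(y))<\eps$ satisfies $F(\widehat\Omega\cap B_\rho(\bx))\cap\cl L(y)\cap B_\rho(\by)=\es$.

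Next I would lift the data to $\widehat F:=(F,G):X\toto Y\times Z$, $\widehat K:=(L(\by)\cup\{\by\})\times K$ and $\widehat\Xi:=\{\cl L(y)\times K\mid y\in Y\}$. The crux of the argument is the equivalence
\[
F(\widehat\Omega\cap B_\rho(\bx))\cap\cl L(y)\cap B_\rho(\by)=\es
\ \Leftrightarrow\
\widehat F(\Omega\cap B_\rho(\bx))\cap\big((\cl L(y)\cap B_\rho(\by))\times K\big)=\es .
\]
It holds because a point of the right-hand intersection is a pair $(v,w)\in F(x)\times G(x)$ for a single $x\in\Omega\cap B_\rho(\bx)$ with $v\in\cl L(y)\cap B_\rho(\by)$ and $w\in K$; the existence of such $w$ is precisely the feasibility $x\in\widehat\Omega$, so intersecting the $Z$-component with $K$ is interchangeable with restricting $x$ to $\widehat\Omega$. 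Since $\bz\in K$ forces $d\big((\by,\bz),\cl L(y)\times K\big)=d(\by,L(y))$, the approximation bound carries over, and Definition~\ref{D4.1}\,\eqref{D4.1.2} shows that $\{\widehat F,\Omega,\widehat K\}$ is extremal at $(\bx,(\by,\bz))$ with respect to $\widehat\Xi$.

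I would then verify the hypotheses of Corollary~\ref{C4.2} for the lifted triple: $\Omega$ is closed by assumption, $\gph\widehat F=\{(x,y,z)\mid y\in F(x),\ z\in G(x)\}$ is closed because $\gph F$ and $\gph G$ are, and each member $\cl L(y)\times K$ of $\widehat\Xi$ is closed because $K$ is closed and $\cl L(y)$ is closed by construction. Since extremality implies approximate stationarity, Corollary~\ref{C4.2} applies to $\{\widehat F,\Omega,\widehat K\}$; rewriting its conclusion through the product rules $N^C_{\cl L(y)\times K}(y_2,z_2)=N^C_{\cl L(y)}(y_2)\times N^C_K(z_2)$ and $D^{*C}\widehat F=D^{*C}(F,G)$ turns alternative~\eqref{C4.2.1} into assertion~\eqref{T4.2.1} and, after the routine renormalization that replaces $\|x_1^*\|+\|x_2^*\|=1$, $\|x_1^*+x_2^*\|<\eps$ by $\|x^*\|=1$, $d(-x^*,N^C_\Omega(x_2))<\eps$, turns alternative~\eqref{C4.2.2} into assertion~\eqref{T4.2.2}. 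The Asplund addendum is inherited verbatim from the corresponding addendum of Corollary~\ref{C4.2}.

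The step I expect to be the main obstacle is the lifting equivalence displayed above: one must check carefully that ``intersecting the $Z$-component with $K$'' and ``restricting $x$ to the feasible set $\widehat\Omega$'' are genuinely interchangeable while the two ball constraints and the approximation bound $d(\by,L(y))<\eps$ are simultaneously preserved. The secondary source of potential error is the bookkeeping in the last step, namely keeping track of which normal cone lives in which factor of $Y^*\times Z^*$ when translating the output of Corollary~\ref{C4.2} back into the product formulation.
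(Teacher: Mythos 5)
Your proposal is correct and follows essentially the same route as the paper's own derivation: the paper likewise folds the constraint into $\widehat\Omega:=\{x\in\Omega\mid G(x)\cap K\ne\es\}$, invokes Proposition~\ref{P4.2} (so conditions \eqref{O1} and \eqref{O5} on $L$, which you rightly flag as implicit in the theorem's statement, are indeed assumed there), rewrites the resulting non-intersection equivalently for $\widehat F:=(F,G)$, $\widehat K:=(L(\by)\cup\{\by\})\times K$ and $\widehat\Xi:=\{\cl L(y)\times K\mid y\in Y\}$ using $d\big((\by,\bz),\cl L(y)\times K\big)=d(\by,L(y))$ under the maximum product norm, and then applies Corollary~\ref{C4.2} to the lifted triple. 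The lifting equivalence you single out as the main obstacle and the final product-cone and renormalization bookkeeping are exactly the steps the paper treats as routine, so no gap remains.
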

\NDC{7/1/24. In view of Remark 4.4, I think \cite[Corollary 3.1 and Theorem~3.2]{ZheNg06} are consequences of Theorem~\ref{T4.2}.}
\AK{8/01/24.
Should this remark be `upgraded' into a theorem?}
\if{
\AK{9/01/24.
Instead of adding another theorem,  Theorem~\ref{T4.1} can be upgraded.
I hesitate going this way as this would make the presentation more complicated.
Ideally, it would be good to formulate the ``additional calculus'' mentioned in the remark.
It should not be difficult to prove the needed result using the conventional extremal principle, but this would be a deviation from the mainstream of this paper.
It would be better to find such a result somewhere.
I cannot remember ever proving it myself or having seen it, but my memory is not reliable. :-(}
}\fi

\if{
\NDC{10/1/24.
I think the calculus rule for the coderivative of the combined mapping $(F,G)$ can be found in \cite[Theorem~3.18]{Mor06.2}.
The calculus rule was established under the a qualification condition and a PSNC assumption.
Applying \cite[Theorem~3.18]{Mor06.2} (by imposing additional assumption on $F$ and $G$) to the above theorem leads to an upgraded version of Theorem~\ref{T4.2}.
I think it is OK to mention all the observations as a remark.
At the same time, I am happy to formulate the theorem if you decide to go this way.
}
}\fi
\AK{11/01/24.
Theorem~3.18 is actually in the first volume.
We cannot refer to it as it is formulated for limiting coderivatives.
The \Fr\ coderivative version should be true without PSNC.
It could make sense checking the earlier papers by Mordukhovich for such a result.
See the references on page 366 of the book.}
\NDC{11/1/24.
Thank you for your instruction.
The Fr\'echet version of \cite[Theorem~3.18]{Mor06.2} can be found in \cite[Theorem~4.8]{Mor97}.
Are you aware of the existence of this kind of result for the Clarke coderivative?
}

\blue{
The next statement presents a calculus rule for the Fr\'echet coderivative of the combined mapping $(F,G)$; cf. \cite[Theorem~3.18]{Mor06.2}.}

\begin{lemma}\label{L4.1}
Let $X,Y$ and $Z$ be Asplund spaces, $\gph F$ and $\gph G$ be closed, $(\bx,\by,\bz)\in\gph (F,G)$, $y^*\in Y^*$, $z^*\in Z^*$.
Suppose $F$ and $G$ have the Aubin property at $(\bx,\by)$ and $(\bx,\bz)$, respectively.
Then, for any $\varepsilon>0$, there exist $(x_1,y)\in\gph F\cap B_\varepsilon(\bx,\by)$, $(x_2,z)\in\gph G\cap B_\varepsilon(\bx,\bz)$,
and $(\hat y^*,\hat z^*)\in B_\varepsilon(y^*,z^*)$ such that
\begin{gather*}
D^{*F}(F,G)(\bx,\by,\bz)(y^*,z^*)\subset D^{*F}F(x_1,y)(\hat y^*)+D^{*F}G(x_2,z)(\hat z^*)+\varepsilon\B_{X^*}.
\end{gather*}
\end{lemma}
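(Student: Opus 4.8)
The plan is to rewrite the coderivative of the combination $(F,G)$ as a Fréchet normal cone to an intersection of two sets, apply the fuzzy (Fréchet) sum rule recalled in Section~\ref{S2}, and then use the normal-cone bound supplied by the Aubin property (Lemma~\ref{L2.3}) to control the multipliers. First I would work in $W:=X\times Y\times Z$ (Asplund, since $X,Y,Z$ are) and set
\[
\Omega_1:=\{(x,y,z)\in W\mid (x,y)\in\gph F\},\qquad
\Omega_2:=\{(x,y,z)\in W\mid (x,z)\in\gph G\}.
\]
Both are closed and $\gph(F,G)=\Omega_1\cap\Omega_2$. Since $\Omega_1$ leaves the $Z$-coordinate free and $\Omega_2$ the $Y$-coordinate free, the product rule for Fréchet normals gives
\[
N^F_{\Omega_1}(x,y,z)=N^F_{\gph F}(x,y)\times\{0\},\qquad
N^F_{\Omega_2}(x,y,z)=\{(a^*,0,c^*)\mid (a^*,c^*)\in N^F_{\gph G}(x,z)\}.
\]
By the definition of the coderivative, $x^*\in D^{*F}(F,G)(\bx,\by,\bz)(y^*,z^*)$ is exactly $(x^*,-y^*,-z^*)\in N^F_{\Omega_1\cap\Omega_2}(\bx,\by,\bz)$.

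Next I would apply the fuzzy sum rule to $i_{\Omega_1}+i_{\Omega_2}$, whose Fréchet subdifferential at $(\bx,\by,\bz)$ contains $(x^*,-y^*,-z^*)$. For a tolerance $\eta>0$ this produces points $(x_1,y)\in\gph F$ and $(x_2,z)\in\gph G$ as close as desired to $(\bx,\by)$ and $(\bx,\bz)$, and normals $(u_1^*,v_1^*)\in N^F_{\gph F}(x_1,y)$, $(u_2^*,w_2^*)\in N^F_{\gph G}(x_2,z)$, such that, writing $\sigma:=\|(u_1^*,v_1^*)\|+\|(u_2^*,w_2^*)\|$,
\[
\|x^*-u_1^*-u_2^*\|\le\eta(1+\sigma),\quad \|{-y^*}-v_1^*\|\le\eta(1+\sigma),\quad \|{-z^*}-w_2^*\|\le\eta(1+\sigma).
\]
The block-wise splitting is automatic because only $\Omega_1$ charges the $Y$-component and only $\Omega_2$ the $Z$-component. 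Setting $\hat y^*:=-v_1^*$ and $\hat z^*:=-w_2^*$ places $u_1^*\in D^{*F}F(x_1,y)(\hat y^*)$ and $u_2^*\in D^{*F}G(x_2,z)(\hat z^*)$.

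The crux, and the one genuine obstacle, is that for two merely lower semicontinuous indicators the fuzzy rule controls the error only up to the factor $(1+\sigma)$, and a priori $\sigma$ could be arbitrarily large, making the estimate vacuous. This is precisely where the Aubin property enters. Choosing $\eta$ small enough that $(x_1,y)$ and $(x_2,z)$ fall inside the neighbourhoods of Lemma~\ref{L2.3}, I obtain $\|u_1^*\|\le\tau_F\|v_1^*\|$ and $\|u_2^*\|\le\tau_G\|w_2^*\|$, where $\tau_F,\tau_G$ are the respective Aubin moduli. With the max-norm on products this yields $\|(u_1^*,v_1^*)\|\le\max\{\tau_F,1\}\|v_1^*\|$ and $\|v_1^*\|\le\|y^*\|+\eta(1+\sigma)$, and symmetrically for the $G$-block. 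Substituting back produces a self-consistent inequality $\sigma\le\max\{\tau_F,1\}\|y^*\|+\max\{\tau_G,1\}\|z^*\|+C\eta(1+\sigma)$ with $C$ depending only on $\tau_F,\tau_G$; for $\eta$ small this forces a bound $\sigma\le R$ \emph{independent of $\eta$}. Hence $\|{-y^*}-v_1^*\|,\|{-z^*}-w_2^*\|\le\eta(1+R)\to0$ and $\|x^*-u_1^*-u_2^*\|\le\eta(1+R)$.

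Finally, given $\varepsilon>0$, I would fix $\eta$ so small that $\eta(1+R)<\varepsilon$ and so that the produced points satisfy $(x_1,y)\in B_\varepsilon(\bx,\by)$, $(x_2,z)\in B_\varepsilon(\bx,\bz)$ and $(\hat y^*,\hat z^*)\in B_\varepsilon(y^*,z^*)$. Then
\[
x^*\in u_1^*+u_2^*+\varepsilon\B_{X^*}\subset D^{*F}F(x_1,y)(\hat y^*)+D^{*F}G(x_2,z)(\hat z^*)+\varepsilon\B_{X^*},
\]
which is the asserted inclusion (established for each $x^*$ in the left-hand side; note that the bound $\sigma\le R$ is uniform in $x^*$, consistent with the fact that the combination $(F,G)$ inherits the Aubin property under the max-norm, so the coderivative slice is itself bounded). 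I expect Step~3 to be the delicate part: everything else is a routine normal-cone and product computation, whereas upgrading the weak fuzzy inequality to a genuine $\varepsilon$-estimate rests entirely on the linear normal bound of Lemma~\ref{L2.3}.
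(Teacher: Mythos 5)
The paper itself contains no proof of this lemma: it is stated (in a draft fragment) with the citation ``cf.\ \cite[Theorem~3.18]{Mor06.2}'', and the authors' discussion attributes the \Fr\ version to \cite[Theorem~4.8]{Mor97}. So your argument is measured against the standard literature proof, which it essentially reconstructs: represent $\gph(F,G)$ as the intersection of the two cylinders $\Omega_1,\Omega_2$ in $X\times Y\times Z$, decompose a \Fr\ normal to the intersection by fuzzy calculus, and use the normal-cone bound supplied by the Aubin property (Lemma~\ref{L2.3}) to de-degenerate the estimate. Your bootstrapping step ($\sigma\le R$ uniformly in $\eta$) is correct and is indeed the crux; it plays exactly the role of the qualification condition $(QC)_F$, which the Aubin property guarantees by Proposition~\ref{P4.11}\,(i). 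One caveat on tools: the fuzzy sum rule with multiplicative error $\eta(1+\sigma)$ for \emph{two lsc indicator functions} is not among the sum rules the paper recalls (those require one summand Lipschitz, and neither indicator is); the statement you invoke is true in Asplund spaces, but it is itself a normalized extremal-principle-type result and needs a citation or a derivation --- e.g., from the approximate stationarity conditions of Theorem~\ref{T1.4} applied to the pair $\{\Omega_1,\Omega_2\}$, splitting into the degenerate and nondegenerate cases exactly as your Case analysis implicitly does. A minor point: with the paper's max-norm convention on products the dual norm is the sum norm, so your constant should be $1+\tau_F$ rather than $\max\{\tau_F,1\}$; this is harmless.

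The one genuine gap is the quantification. The lemma asserts a \emph{single} triple $(x_1,y)$, $(x_2,z)$, $(\hat y^*,\hat z^*)$, chosen before the inclusion, that serves the \emph{entire} set $D^{*F}(F,G)(\bx,\by,\bz)(y^*,z^*)$. Your construction is per-element: the points produced by the fuzzy rule depend on the particular $x^*$ being decomposed (through the function whose local minimum is perturbed), and your parenthetical appeal to the uniform bound $\sigma\le R$ does not repair this --- boundedness of the coderivative slice makes the \emph{estimates} uniform in $x^*$, but distinct $x^*$ still generate distinct points $(x_1,y)$, $(x_2,z)$ and distinct $(\hat y^*,\hat z^*)$, and no compactness argument converts a cover by varying right-hand sides into one with common points. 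What you have actually proved is: for every $x^*\in D^{*F}(F,G)(\bx,\by,\bz)(y^*,z^*)$ and every $\varepsilon>0$ there exist points, depending on $x^*$, with $x^*\in D^{*F}F(x_1,y)(\hat y^*)+D^{*F}G(x_2,z)(\hat z^*)+\varepsilon\B_{X^*}$. That per-element version is all the lemma's intended application needs (in the draft corollary it serves, a single $x^*$ arising from the multiplier rule is decomposed) and is how such fuzzy calculus rules are normally formulated; so either the lemma should be restated with the quantifiers inside the inclusion, or your proof requires an additional argument for the common-points version, which it does not currently contain.
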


The next statement is a consequence of Theorem~\ref{T4.2}.
\begin{corollary}\label{C4.4}
Let $X,Y$ and $Z$ be Asplund spaces, $(\bx,\by,\bz)\in\gph (F,G)$, $\Omega$, $K$, $\gph F$ and $\gph G$ be closed, $F$ and $G$ have the Aubin property around $(\bx,\by)$ and $(\bx,\bz)$, respectively.
If $(\bx,\by)$ is locally extremal in problem \eqref{P1}, then one of the following assertions holds true:
\begin{enumerate}
\item
there is an $M>0$ such that, for any $\varepsilon>0$, there exist
$(x_1,y)\in\gph F\cap B_{\varepsilon}(\bx,\by)$,
$(x_2,z)\in\gph G\cap B_{\varepsilon}(\bx,\bz)$, $\hat x\in\Omega\cap B_{\varepsilon}(\bx)$,
$y'\in Y$,
$\hat y \in\cl{L(y')}\cap(\eps\B_Y)$,
$\hat z\in K {\cap B_{\varepsilon}(\bz)}$,
$(y^*,z^*)\in N^F_{\cl{L(y')\times K}}(\hat y,\hat z)+\eps\B_{Y^*\times Z^*}$
with $\|(y^*,z^*)\|=1$, and $(\hat y^*,\hat z^*)\in B_{\varepsilon}(y^*, z^*)$ satisfying
\sloppy
\begin{gather}\label{C4.4-1}
0\in D^{*F}F(x_1,y)(\hat y^*)+D^{*F}G(x_2,z)(\hat z^*)+N^F_{\Omega}(\hat x)\cap(M\B_{X^*})+\varepsilon\B_{X^*}.
\end{gather}
\item
for any $\varepsilon>0$, there exist
$(x_1,y)\in\gph F\cap B_{\varepsilon}(\bx,\by)$,
$(x_2,z)\in\gph G\cap B_{\varepsilon}(\bx,\bz)$,
$\hat x\in\Omega\cap B_{\varepsilon}(\bx)$,
and $x^*\in D^{*F}F(x_1,y)(y^*)+D^{*F}G(x_2,z)(z^*)+\varepsilon\B_{X^*}$
such that $\|x^*\|=1$ and $d\left(-x^*,N^F_\Omega(x_2)\right)<\eps$.
\end{enumerate}
\end{corollary}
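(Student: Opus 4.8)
The plan is to obtain this statement from Theorem~\ref{T4.2} by replacing the coderivative of the combined mapping $(F,G)$ with the separate coderivatives of $F$ and $G$, using the calculus rule of Lemma~\ref{L4.1}. Since $X$, $Y$ and $Z$ are Asplund and $\Omega$, $K$, $\gph F$, $\gph G$ are closed, and $(\bx,\by)$ is locally extremal in \eqref{P1} with $\bz\in G(\bx)\cap K$, the \Fr\ version of Theorem~\ref{T4.2} applies and yields, for every $\varepsilon>0$, one of its two alternatives stated in terms of $D^{*F}(F,G)$ and $N^F$. The two cases of the corollary will correspond to these two alternatives, so it suffices to decompose the combined coderivative in each.

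First I would record the robustness of the Aubin property under perturbation of the base point: if $F$ (resp.\ $G$) has the Aubin property at $(\bx,\by)$ (resp.\ $(\bx,\bz)$), then it has the same property, with the same modulus and a smaller radius, at every graph point close enough to $(\bx,\by)$ (resp.\ $(\bx,\bz)$). This is what allows Lemma~\ref{L4.1} to be invoked with the moving point $(x_1,y_1,z_1)\in\gph(F,G)\cap B_{\varepsilon}(\bx,\by,\bz)$ supplied by Theorem~\ref{T4.2} playing the role of the reference point, provided $\varepsilon$ is small.

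In Case~(i) Theorem~\ref{T4.2} provides $M>0$ and, for each $\varepsilon>0$, an $\Omega$-point $\hat x\in\Omega\cap B_\varepsilon(\bx)$, graph points $(x_1,y_1,z_1)$ as above, and a multiplier $(y^*,z^*)$ with $\|(y^*,z^*)\|=1$ and $(y^*,z^*)\in N^F_{\cl L(y')\times K}(\hat y,\hat z)+\varepsilon\B_{Y^*\times Z^*}$ for suitable $y'$, $\hat y\in\cl L(y')$, $\hat z\in K$, such that
\begin{gather*}
0\in D^{*F}(F,G)(x_1,y_1,z_1)(y^*,z^*)+N^F_\Omega(\hat x)\cap(M\B_{X^*})+\varepsilon\B_{X^*}.
\end{gather*}
All of this multiplier data is inherited unchanged, since the decomposition below touches only the $X^*$-component. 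Applying Lemma~\ref{L4.1} at $(x_1,y_1,z_1)$ to this fixed $(y^*,z^*)$ produces nearby graph points $(x_1',y)\in\gph F$, $(x_2',z)\in\gph G$ and a pair $(\hat y^*,\hat z^*)\in B_\varepsilon(y^*,z^*)$ with
\begin{gather*}
D^{*F}(F,G)(x_1,y_1,z_1)(y^*,z^*)\subset D^{*F}F(x_1',y)(\hat y^*)+D^{*F}G(x_2',z)(\hat z^*)+\varepsilon\B_{X^*}.
\end{gather*}
Substituting into the previous inclusion and merging the two copies of $\varepsilon\B_{X^*}$ gives \eqref{C4.4-1} with $2\varepsilon$ in place of $\varepsilon$; starting from $\varepsilon/2$ and relabelling $(x_1',x_2')$ as $(x_1,x_2)$ yields the statement. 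Case~(ii) is entirely parallel: select $(y^*,z^*)\in\varepsilon\B_{Y^*\times Z^*}$ with $x^*\in D^{*F}(F,G)(x_1,y_1,z_1)(y^*,z^*)$, decompose by Lemma~\ref{L4.1}, and note that $\|x^*\|=1$ and $d(-x^*,N^F_\Omega(\hat x))<\varepsilon$ survive the decomposition unchanged.

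The main obstacle is the bookkeeping at the moving base point: Lemma~\ref{L4.1} is formulated at a fixed reference point, while Theorem~\ref{T4.2} delivers its conclusion at points $(x_1,y_1,z_1)$ that drift with $\varepsilon$. The robustness observation licenses the substitution, but one must verify that the ball radii in Lemma~\ref{L4.1} keep the newly produced points $(x_1',y)$, $(x_2',z)$ inside the $B_\varepsilon$-neighbourhoods required by the corollary---at worst after enlarging the radii by a fixed multiple of $\varepsilon$, which is harmless since $\varepsilon$ is arbitrary---and that $(\hat y^*,\hat z^*)$ indeed stays within the prescribed $\varepsilon$-distance of the normalized multiplier $(y^*,z^*)$.
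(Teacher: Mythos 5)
Your proposal is correct and takes essentially the same route as the paper's own proof: apply the Fr\'echet version of Theorem~\ref{T4.2} with tolerance $\varepsilon/2$, then decompose $D^{*F}(F,G)$ at the resulting points via Lemma~\ref{L4.1} with tolerance $\varepsilon/2$, and merge the two $(\varepsilon/2)\B_{X^*}$ terms, in each of the two alternatives separately. Your explicit observation that the Aubin property persists (same modulus, smaller radius) at nearby graph points, which licenses invoking Lemma~\ref{L4.1} at the moving base points $(x_1',y_1',z_1')$ supplied by Theorem~\ref{T4.2}, is a detail the paper's proof leaves implicit.
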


\NDC{12/1/24.
I think if we apply Lemma~\ref{L4.1} to Corollary~\ref{C4.4} with $G=(F_1,\ldots,F_n)$, then we will arrive at
\cite[Theorem~3.2]{ZheNg06}.
Should we go this way or add this observation (if it is true!) as a remark?
}

\begin{proof}
Suppose $(\bx,\by)$ is locally extremal in problem \eqref{P1}.
\begin{enumerate}
\item
By Theorem~\ref{T4.2}\,\ref{T4.2.1}, there is an $M>0$ such that, for any $\varepsilon>0$, one can find $(x'_1,y'_1,z'_1)\in\gph (F,G)\cap B_{\varepsilon/2}(\bx,\by,\bz)$, $\hat x\in\Omega\cap B_{\varepsilon/2}(\bx)$,
$y'\in Y$,
$\hat y\in\cl{L(y')}\cap(\frac{\varepsilon}{2}\B_Y)$,
$\hat z\in K {\cap B_{\varepsilon/2}(\bz)}$,
$(y^*,z^*)\in N^F_{\cl{L(y')\times K}}(\hat y,\hat z)+\eps\B_{Y^*\times Z^*}$
with $\|(y^*,z^*)\|=1$
such that
\sloppy
\begin{gather}\label{C4.4-3}
0\in D^{*F}(F,G)(x'_1,y'_1,z'_1)(y^*,z^*)+N^F_\Omega(\hat x)\cap(M\B_{X^*})+(\varepsilon/2)\B_{X^*}.
\end{gather}
In view of Lemma~\ref{L4.1},
there exist $(x_1,y)\in\gph F\cap B_{\varepsilon/2}(x'_1,y'_1)$,
$(x_2,z)\in\gph G\cap B_{\varepsilon/2}(x'_1,z'_1)$, and $(\hat y^*,\hat z^*)\in B_{\varepsilon/2}(y^*,z^*)$
satisfying
\begin{gather}\label{C4.4-4}
D^{*F}(F,G)(x'_1,y'_1,z'_1)(y^*,z^*)\subset D^{*F}F(x_1,y)(\hat y^*)+D^{*F}G(x_2,z)(\hat z^*)+(\varepsilon/2)\B_{X^*}.
\end{gather}
It is clear that $(x_1,y)\in\gph F\cap B_{\varepsilon}(\bx,\by)$, and
$(x_2,z)\in\gph G\cap B_{\varepsilon}(\bx,\bz)$.
Then \eqref{C4.4-1} follows from \eqref{C4.4-3}, \eqref{C4.4-4}, and the fact $(\varepsilon/2)\B_{X^*}+(\varepsilon/2)\B_{X^*}=\varepsilon \B_{X^*}$.
\item
By Theorem~\ref{T4.2}\,\ref{T4.2.2}, for any $\varepsilon>0$ there exist $(x',y',z')\in\gph (F,G)\cap B_{\varepsilon/2}(\bx,\by,\bz)$,
$\hat x\in\Omega\cap B_{\varepsilon/2}(\bx)$,
$(\hat y^*,\hat z^*)\in (\eps/2)\B_{{Y^* \times Z^*}}$,
$x^*\in D^{*C}(F,G)(x',y',z')(\hat y^*,\hat z^*)$
such that $\|x^*\|=1$ and $d\left(-x^*,N^C_\Omega(\hat x)\right)<\eps/2$.
By Lemma~\ref{L4.1},
there exist $(x_1,y)\in\gph F\cap B_{\varepsilon/2}(x',y')$,
$(x_2,z)\in\gph G\cap B_{\varepsilon/2}(x',z')$, $(y^*,z^*)\in B_{\varepsilon/2}(\hat y^*,\hat z^*)$
such that
\begin{gather}\label{C4.4-6}
D^{*F}(F,G)(x',y',z')(\hat y^*,\hat z^*)\subset D^{*F}F(x_1,y)(y^*)+D^{*F}G(x_2,z)(z^*)+(\varepsilon/2)\B_{X^*}.
\end{gather}
It is clear that $(x_1,y)\in\gph F\cap B_{\varepsilon}(\bx,\by)$,
$(x_2,z)\in\gph G\cap B_{\varepsilon}(\bx,\bz)$, and $(y^*,z^*)\in \varepsilon\B_{Y^*\times Z^*}$.
\end{enumerate}
\qed\end{proof}

\NDC{21/1/24.
I think Theorem~\ref{T4.2} and Corollary~\ref{C4.4} can be combined in one theorem.
Should we do that?
}
\NDC{21/1/24.
I think they should not be combined since we will lose the Clarke case.}

\begin{remark}
The optimality conditions in Theorem~\ref{T4.2} are formulated in terms of coderivatives of the combined mapping $(F,G):X\toto Y\times Z$.
To obtain conditions in terms of coderivatives of the individual mappings $F$ and $G$, some additional calculus is needed.
\end{remark}
\begin{remark}
It is possible to upgrade slightly the model used in Definition~\ref{D4.1} and the subsequent statements: instead of a single mapping $F:X\toto Y$, one can consider a finite collection of mappings $F_i:X\toto Y_i$ $(i=1,\ldots,n)$ together with
a collection of sets $K_i\subset Y_i$ $(i=1,\ldots,n)$ and points $\by_i\in F_i(\bx)$ $(i=1,\ldots,n)$.
Such an extended model can still be easily embedded into the setting studied in Sect.~\ref{S4}.
Instead of the pair of sets \eqref{Om} in $X\times Y$, one would need to consider an appropriate family of $n+1$ sets in $X\times Y_1\times\ldots\times Y_n$.
Apart from upgrading Theorem~\ref{T4.2}, this way one can treat generalizations of problems  \eqref{P} and \eqref{P1} containing multiple constraints and multiple objective functions.
\end{remark}
}\fi

\section{Set-Valued Optimization: Multiple Mappings}
\label{S6}

It is not difficult to upgrade
the model used in Definition~\ref{D4.1} and the subsequent statements to make it directly applicable to constraint optimization problems: instead of a single mapping $F:X\toto Y$ with $\by\in F(\bx)$ for some $\bx\in\Omega\subset X$ and a single family $\Xi$ of subsets of $Y$, one can consider finite collections of mappings $F_i:X\toto Y_i$ between normed spaces
together with points $\by_i\in F_i(\bx)$, and nonempty families $\Xi_i$ of subsets of $Y_i$ $(i=1,\ldots,n)$.

This more general setting can be viewed as a structured particular case of the set-valued optimization model
considered in Sect.~\ref{S5} if one sets
\begin{gather*}
Y:=Y_1\times\ldots\times Y_n,\quad
F:=(F_1,\ldots,F_n),\quad
\by:=(\by_1,\ldots,\by_n)\AND \Xi:=\Xi_1\times\ldots\times\Xi_n.
\end{gather*}
Thus, $\by\in F(\bx)$, and $A\in\Xi$ means that $A=A_1\times\ldots\times A_n$ and $A_i\in\Xi_i$
$(i=1,\ldots,n)$.
To shorten the notation, we keep talking in this section about extremality/stationarity of the triple $\{F,\Omega,\Xi\}$ at $(\bx,\by)$.

\begin{definition}
\label{D4.1+}
The triple $\{F,\Omega,\Xi\}$ is
extremal (resp., stationary, approximately stationary) at $(\bx,\by)$ if the collection of $n+1$ families of sets:
\begin{gather*}
\widehat\Xi_i:=\{\Omega_i\}\;\;(i=1,\ldots,n) \AND \widehat\Xi_{n+1}:=\{\Omega\times A\mid A\in\Xi\}.
\end{gather*}
is extremal (resp., stationary, approximately stationary) at $(\bx,\by)$, where
$\Omega_i:=\{(x,y_1,\ldots,y_n)\in X\times Y_1\times\ldots\times Y_n\mid y_i\in F_i(x)\}$ $(i=1,\ldots,n)$.
\sloppy
\end{definition}

With the notation introduced above,
Definitions~\ref{D1.5} and \ref{D4.1+} lead to characterizations of the extremality and stationarity of the triple $\{F,\Omega,\Xi\}$ given in parts \eqref{P3.3.1} and \eqref{P3.3.2} of Proposition~\ref{P3.3}.
The corresponding characterization of the approximate stationarity is a little different.
It is formulated in the next proposition.

\begin{proposition}
The triple $\{F,\Omega,\Xi\}$
is
approximately stationary at $(\bx,\by)$ if and only if, for any $\varepsilon>0$, there exist a $\rho\in(0,\varepsilon)$,
$A_i\in\Xi_i$ $(i=1,\ldots,n)$,
$x_i\in B_\varepsilon(\bx)$ $(i=1,\ldots,n+1)$, and
$y_i,v_i\in B_\varepsilon(\by_i)$ $(i=1,\ldots,n)$ such that $d((x_i,y_i),\gph F_i)<\varepsilon\rho$, $d(v_i,A_i)<\varepsilon\rho$ $(i=1,\ldots,n)$,
$d(x_{n+1},\Omega)<\varepsilon\rho$ and,
for each $x\in\Omega\cap B_\rho(x_{n+1})$,
there is an $i\in\{1,\ldots,n\}$ such that
\begin{gather*}
F_i(x_i+x-x_{n+1})\cap \big(y_i+(A_i-v_{i})\cap(\rho\B_Y)\big)=\es.
\end{gather*}
\end{proposition}

Application of Theorem~\ref{T1.4} in the current setting produces necessary conditions for approximate stationarity and, hence, also stationarity and extremality extending Theorem~\ref{T4.1} and its corollaries.
Condition $({QC})_C$ can be extended as follows:
\begin{enumerate}
\item [ ]
\begin{enumerate}
\item [$(\widehat{QC})_C$]
there is an $\varepsilon>0$ such that
$\big\|\sum_{i=1}^{n+1}x_i^*\big\|\ge\eps$
for all
$(x_i,y_i)\in\gph F_i\cap B_{\varepsilon}(\bx,\by_i)$,
$x_i^*\in D^{*C}F_i(x_i,y_i)(\eps\B_{Y_i^*})$
$(i=1,\ldots,n)$,
$x_{n+1}\in\Omega\cap B_{\varepsilon}(\bx)$ and
$x_{n+1}^*\in N^C_\Omega(x_{n+1})$ such that $\sum_{i=1}^{n+1}\|x_i^*\|=1$,
\end{enumerate}
\end{enumerate}
while the corresponding extension $(\widehat{QC})_F$ of condition $({QC})_F$ is obtained by replacing $N^C$ and $D^{*C}$ in $(\widehat{QC})_C$ by $N^F$ and $D^{*F}$, respectively.
An extension of Corollary~\ref{C4.40} takes the following form.

\begin{theorem}
\label{T5.10}
Let $X$, $Y_1,\ldots,Y_n$ be Banach spaces, $\Omega$ and, for each $i=1,\ldots,n$, the graph $\gph F_i$ and all
members of $\Xi_i$ be closed.
Suppose
$\{F,\Omega,\Xi\}$ is approximately stationary at $(\bx,\by)$.
If condition $(\widehat{QC})_C$ is satisfied, then
there is an $M>0$ such that,
for any $\varepsilon>0$, there exist $(x_i,y_i)\in\gph F_i\cap B_{\varepsilon}(\bx,\by_i)$, $ A_i\in\Xi_i$,
$v_{i}\in A_i\cap B_\varepsilon(\by_i)$,
$y^*_i\in N^C_{ A_i}(v_i)+\eps\B_{Y_i^*}$ $(i=1,\ldots,n)$, and $x_{n+1}\in\Omega\cap B_{\varepsilon}(\bx)$
such that $\sum_{i=1}^n\|y_i^*\|=1$ and
\begin{gather*}
0\in\sum_{i=1}^n D^{*C}F_i(x_i,y_i)(y_i^*) +N^C_\Omega(x_{n+1})\cap(M\B_{X^*})+\eps\B_{X^*}.
\end{gather*}

If $X$ is Asplund and condition $(\widehat{QC})_F$ is satisfied,
then the above assertion holds true with $N^F$ and $D^{*F}$ in place of $N^C$ and $D^{*C}$, respectively.
\end{theorem}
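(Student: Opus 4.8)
The plan is to mirror the proof of Corollary~\ref{C4.2} (hence of Corollary~\ref{C4.30}), with the pair \eqref{Om} replaced by the family $\Omega_1,\ldots,\Omega_{n+1}$ and the perturbation families $\widetilde\Xi_1,\ldots,\widetilde\Xi_{n+1}$ introduced above. First I would establish the multiple-mapping analogue of Theorem~\ref{T4.1} by applying Theorem~\ref{T1.4} to these $n+1$ families at $(\bx,\by)$ and converting the abstract normals it supplies into coderivative data for the $F_i$. Each $\Omega_i$ $(i\le n)$ is the cylinder $\{(x,y_1,\ldots,y_n)\mid(x,y_i)\in\gph F_i\}$, so the product rule for the Clarke (respectively, Fr\'echet) normal cone forces all off-diagonal $Y_j$-components of a normal to $\Omega_i$ to vanish, leaving a vector of the form $(x_i^*,0,\ldots,\eta_i^*,\ldots,0)$ with $(x_i^*,\eta_i^*)\in N^C_{\gph F_i}$, i.e.\ $x_i^*\in D^{*C}F_i(x_i,y_i)(-\eta_i^*)$; and since $\Omega_{n+1}=\Omega\times\widetilde K_1\times\cdots\times\widetilde K_n$, a normal to it splits as $(x_{n+1}^*,y_1^*,\ldots,y_n^*)$ with $x_{n+1}^*\in N^C_\Omega$ and $y_i^*\in N^C_{\widetilde K_i}$. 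Because the dual of the maximum norm on the product is the sum norm, the two conclusions of Theorem~\ref{T1.4} then read, after taking $\varepsilon=1/j$, as the normalization $\sum_{i=1}^n(\|x_{ij}^*\|+\|\eta_{ij}^*\|)+\|x_{n+1,j}^*\|+\sum_{i=1}^n\|y_{ij}^*\|=1$ together with the uncoupled estimates $\|\sum_{i=1}^nx_{ij}^*+x_{n+1,j}^*\|<1/j$ and $\|\eta_{ij}^*+y_{ij}^*\|<1/j$ $(i=1,\ldots,n)$, where the base points lie in the respective sets and within $B_{1/j}$ of the reference point.

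Next I would run the dichotomy of Corollary~\ref{C4.2}, now with the scalar $t_j:=\sum_{i=1}^n\|\eta_{ij}^*\|$ (the total size of the coderivative arguments) in the role played there by $\|y_{1j}^*\|$. In the singular case $t_j\to0$ the estimate $\|\eta_{ij}^*+y_{ij}^*\|<1/j$ forces $\sum_{i=1}^n\|y_{ij}^*\|\to0$ as well, so by the normalization $S_j:=\sum_{i=1}^n\|x_{ij}^*\|+\|x_{n+1,j}^*\|\to1$; rescaling the $x$-data by $S_j$ produces, for all large $j$, vectors $\tilde x_i^*\in D^{*C}F_i(x_{ij},y_{ij})(\varepsilon\B_{Y_i^*})$ $(i\le n)$ and $\tilde x_{n+1}^*\in N^C_\Omega(x_{n+1,j})$ with $\sum_{i=1}^{n+1}\|\tilde x_i^*\|=1$ and $\|\sum_{i=1}^{n+1}\tilde x_i^*\|<\varepsilon$, contradicting $(\widetilde{QC})_C$. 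Hence under $(\widetilde{QC})_C$ only the non-singular case $\limsup_{j\to\infty}t_j>\alpha>0$ survives.

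In that case I would put $M:=1/\alpha$ and, given $\varepsilon>0$, choose $j$ with $j^{-1}<\alpha\varepsilon$ and $t_j>\alpha$, and normalize everything by $t_j$: set $y_i^*:=-\eta_{ij}^*/t_j$, $x_i^*:=x_{ij}^*/t_j$ and $x_{n+1}^*:=x_{n+1,j}^*/t_j$. Then $\sum_{i=1}^n\|y_i^*\|=1$, and since the normal cones are cones, $x_i^*\in D^{*C}F_i(x_{ij},y_{ij})(y_i^*)$, $x_{n+1}^*\in N^C_\Omega(x_{n+1,j})$ with $\|x_{n+1}^*\|\le1/t_j<M$, and $y_{ij}^*/t_j\in N^C_{\widetilde K_{ij}}(v_{ij})$. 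Dividing $\|\eta_{ij}^*+y_{ij}^*\|<1/j$ by $t_j$ gives $\|y_i^*-y_{ij}^*/t_j\|<(1/j)/\alpha<\varepsilon$, so $y_i^*\in N^C_{\widetilde K_{ij}}(v_{ij})+\varepsilon\B_{Y_i^*}$, while dividing $\|\sum_{i=1}^nx_{ij}^*+x_{n+1,j}^*\|<1/j$ by $t_j$ yields $0\in\sum_{i=1}^nD^{*C}F_i(x_{ij},y_{ij})(y_i^*)+N^C_\Omega(x_{n+1,j})\cap(M\B_{X^*})+\varepsilon\B_{X^*}$. Relabelling the base points and sets produces the stated conclusion; the Asplund case is identical, with $N^C,D^{*C}$ and $(\widetilde{QC})_C$ replaced throughout by $N^F,D^{*F}$ and $(\widetilde{QC})_F$.

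The step I expect to be the main obstacle is the first one, extracting the coderivative form of the multiple-mapping analogue of Theorem~\ref{T4.1}: everything afterwards is the rescaling bookkeeping of Corollary~\ref{C4.2}. The delicate points are the cylinder/product normal-cone calculus for the $\Omega_i$ and for $\Omega\times\widetilde K$, and verifying that the single smallness inequality in the product (sum) norm decouples into one $X^*$-estimate and $n$ separate $Y_i^*$-estimates. It is precisely the vanishing of the off-diagonal $Y_j$-components of the normals to $\Omega_i$ that leaves, in each $Y_i$-slot, only the pair $\eta_{ij}^*$ (from $\Omega_i$) and $y_{ij}^*$ (from $\Omega_{n+1}$) coupled, thereby making the per-coordinate estimate $\|\eta_{ij}^*+y_{ij}^*\|<1/j$ available and driving the whole argument.
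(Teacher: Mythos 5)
Your proposal is correct and takes essentially the same route the paper intends for Theorem~\ref{T5.10}: apply Theorem~\ref{T1.4} to the $n+1$ cylinder and product sets $\Omega_1,\ldots,\Omega_{n+1}$ with the perturbation families $\widetilde\Xi_1,\ldots,\widetilde\Xi_{n+1}$, use the exact product formula for Clarke/Fr\'echet normal cones (off-diagonal components vanishing) so that the single sum-norm smallness estimate decouples into one $X^*$-estimate and $n$ separate $Y_i^*$-estimates, and then run the Case~1/Case~2 rescaling bookkeeping from the proof of Corollary~\ref{C4.2}, with $(\widetilde{QC})_C$ (resp. $(\widetilde{QC})_F$) excluding the singular case exactly as in Corollary~\ref{C4.30}. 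Your substitute normalizer $t_j:=\sum_{i=1}^n\|\eta_{ij}^*\|$ for $\|y_{1j}^*\|$, the bound $\|x_{n+1}^*\|\le 1/t_j<M:=1/\alpha$, and the per-coordinate estimates $\|\eta_{ij}^*+y_{ij}^*\|<1/j$ all check out, so the argument is complete.
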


\begin{remark}
\begin{enumerate}
\item
Proposition~\ref{P4.11} (with $F=(F_1,\ldots,F_n)$ in part (i)) gives two typical sufficient conditions for the fulfillment of conditions $(\widehat{QC})_C$ and $(\widehat{QC})_F$.
\item
Theorem~\ref{T5.10} covers \cite[Theorems~3.1 and 3.2]{ZheNg06}.
In view of the previous item,
it also covers \cite[Corollary 3.2]{ZheNg06}.
\item
Theorem~\ref{T5.10} is a consequence of the dual necessary conditions for approximate stationarity of a collection of sets in Theorem~\ref{T1.4}.
The latter theorem can be extended to cover a more general quantitative notion of approximate $\al$-sta\-tionarity (with a fixed $\al>0$), leading to corresponding extensions of Theorem~\ref{T5.10} and its corollaries covering, in particular, dual conditions for $\eps$-Pareto optimality in \cite[Theorems~4.3 and 4.5]{ZheNg11}.
\end{enumerate}
\end{remark}

\if
\AK{2/03/24.
Can you see a simple way of deducing from Theorem~\ref{T5.10} the fuzzy calculus result we discussed some time ago: a representation of elements of the coderivative of $(F_1,\ldots,F_n)$ via elements of the coderivatives of $F_1,\ldots,F_n$?
}
If, for each $i=1,\ldots,n$, the space $Y_i$ is equipped with a preference relation defined by an abstract level-set mapping $L_i:Y_i\toto Y_i$, we can consider a level-set mapping $L:Y\toto Y$ defined, for each $y=(y_1,\ldots,y_n)$, by $L(y):=L_1(y_1)\times\ldots\times L_n(y_n)$.
Definitions \eqref{L}, \eqref{Lbul} and \eqref{KXi} are applicable.
In particular, with $\by:=(\by_1,\ldots,\by_n)$ and $\de>0$, definition \eqref{KXi} gives
$K:=K_1\times\ldots\times K_n$ and $\Xi^\de:=\Xi_1^\de\times\ldots\times\Xi_n^\de$ where, for each $i=1,\ldots,n$,
\begin{gather*}
K_i:=L_i(\by_i)\cup\{\by_i\}\AND \Xi_i^\de:=\{\cl L_i(y_i)\mid y_i\in K_i\cap B_\de(\by_i)\}.
\end{gather*}
\fi

Employing the multiple-mapping model studied in this section, one can consider a more general than \eqref{P} optimization problem with set-valued constraints:
\begin{gather}
\label{P2}
\tag{$\mathcal{P}$}
\text{minimize }\;F_0(x)\quad \text{subject to }\; F_i(x)\cap K_i\ne\es\; (i=1,\ldots,n),\;\; x\in\Omega,
\end{gather}
where $F_i:X\toto Y_i$ $(i=0,\ldots,n)$ are mappings between normed spaces, $\Omega\subset X$, $K_i\subset Y_i$ $(i=1,\ldots,n)$, and $Y_0$ is equipped with a level-set mapping $L$.
The ``functional'' constraints in \eqref{P2} can model a system of equalities and inequalities as well as more general operator-type constraints.

Using the set of \emph{admissible solutions}
\begin{center}
$\widehat\Omega:=\{x\in\Omega\mid F_i(x)\cap K_i\ne\es,\; i=1,\ldots,n\}$,
\end{center}
we say that $(\bx,\by_0)\in X\times Y_0$ is
an extremal point of problem \eqref{P2} if it is
extremal for
$F_0$ on $\widehat\Omega$.
This means, in particular, that $\bx\in\Omega$, $\by_0\in F_0(\bx)$, and there exist $\by_i\in F_i(\bx)\cap K_i$ $(i=1,\ldots,n)$.

We are going to employ the model studied in the first part of this section with $n+1$ objects in place of $n$.
There are $n+1$ mappings $F_0,\ldots,F_n$ and $n$ sets $K_1,\ldots,K_n$ in \eqref{P2}.
As in \eqref{KXi}, we define
$\Xi_0^\de:=\{\cl L(y)\mid y\in L_\de(\by_0)\}$ $(\de>0)$,
where $L_\de(\by_0)=(L(\by_0)\cap B_\de(\by))\cup\{\by\}$.
Now, set
\begin{gather*}
Y:=Y_0\times\ldots\times Y_n,\;\;
F:=(F_0,\ldots,F_n),\;\;
\by:=(\by_0,\ldots,\by_n)\AND \Xi^\de:=\Xi_0^\de\times K_1\times\ldots\times K_n.
\end{gather*}
Using the same arguments, one can prove the next extension of Proposition~\ref{P4.2}.

\begin{proposition}
\label{P5.1}
Let $\bx\in\Omega$, $\by_0\in F_0(\bx)$, $\by_i\in F_i(\bx)\cap K_i$ $(i=1,\ldots,n)$, $\de>0$, and $F$, $\by$ and $\Xi^\de$ be defined as above.
Suppose $L$ satisfies conditions \eqref{O1} and \eqref{O5}.
If $(\bx,\by_0)$ is an extremal point of problem \eqref{P2}, then $\{F,\Omega,\Xi^\de\}$ is extremal at $(\bx,\by)$.
\end{proposition}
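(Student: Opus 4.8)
The plan is to follow the proof of Proposition~\ref{P4.2} almost verbatim, arguing by contraposition, with $F_0$, $\by_0$ and $\widehat\Omega$ taking over the roles of $F$, $\by$ and $\Omega$; the only genuinely new ingredient is unpacking the product structure of $\Xi^\de$. Since the level-set mapping $L$ acts on the $Y_0$-component alone, I would first record, exactly as in the proof of Proposition~\ref{P4.2}, that \eqref{O1} together with Proposition~\ref{P4.10}\,\eqref{P4.10.1} gives
\[
L^\bullet_{\eps}(\by_0)\cap L^\circ(\by_0)\cap B_\de(\by_0)\ne\es \qdtx{for all} \eps>0 \;\text{and}\; \de>0.
\]

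Next I would decode membership in $\Xi^\de_\eps(\by)$. Every member of $\Xi^\de$ has the form $\widetilde K=\cl L(y)\times K_1\times\ldots\times K_n$ with $y\in K_0\cap B_\de(\by_0)$; as the product carries the maximum norm and $\by_i\in K_i$ for $i=1,\ldots,n$, one has $d(\by,\widetilde K)=d(\by_0,\cl L(y))=d(\by_0,L(y))$, so, in view of \eqref{Lbul}, the condition $\widetilde K\in\Xi^\de_\eps(\by)$ is equivalent to $y\in L^\bullet_\eps(\by_0)\cap K_0\cap B_\de(\by_0)$. In particular, every $y\in L^\bullet_\eps(\by_0)\cap L^\circ(\by_0)\cap B_\de(\by_0)$ determines an admissible perturbation $\widetilde K\in\Xi^\de_\eps(\by)$.

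The heart of the argument is the contrapositive. Suppose $\{F,\Omega,K\}$ is not extremal at $(\bx,\by)$ with respect to $\Xi^\de$ for some $\de>0$, and fix an arbitrary $\rho\in(0,+\infty]$; I shall show $(\bx,\by_0)$ is not extremal in \eqref{P2}. Negating Definition~\ref{D4.1}\,\eqref{D4.1.2} and invoking the previous step, there is an $\eps>0$ such that, for every $y\in L^\bullet_\eps(\by_0)\cap L^\circ(\by_0)\cap B_\de(\by_0)$,
\[
F(\Omega\cap B_\rho(\bx))\cap(\cl L(y)\times K_1\times\ldots\times K_n)\cap B_\rho(\by)\ne\es.
\]
Choosing such a $y$ (possible by the first display) and a point in this intersection, I obtain an $x\in\Omega\cap B_\rho(\bx)$ together with $w_0\in F_0(x)\cap\cl L(y)\cap B_\rho(\by_0)$ and $w_i\in F_i(x)\cap K_i\cap B_\rho(\by_i)$ $(i=1,\ldots,n)$. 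The existence of the $w_i$ already forces $F_i(x)\cap K_i\ne\es$, hence $x\in\widehat\Omega$. Applying \eqref{O5} to $y\in L^\circ(\by_0)$ and $w_0\in\cl L(y)$ yields $w_0\in L^\circ(\by_0)$, so $w_0\in F_0(\widehat\Omega\cap B_\rho(\bx))\cap L^\circ(\by_0)\cap B_\rho(\by_0)$. Since $\rho$ was arbitrary, $(\bx,\by_0)$ is not extremal for $F_0$ on $\widehat\Omega$, i.e., not extremal in \eqref{P2}; this establishes the contrapositive for every $\de>0$.

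I expect the main (and only) obstacle to be the bookkeeping in the contrapositive step: one must correctly pass from the \emph{localized} feasibility $F_i(x)\cap K_i\cap B_\rho(\by_i)\ne\es$ produced by the product intersection to the \emph{unlocalized} membership $x\in\widehat\Omega$, and keep track of the fact that only the $K_0$-component carries the $L$-perturbation while the remaining components stay frozen at $K_1,\ldots,K_n$. This implication is immediate because the constraints defining $\widehat\Omega$ only require nonempty intersection, not intersection inside a ball; once this is noted, the remainder is a faithful transcription of the single-mapping argument of Proposition~\ref{P4.2}.
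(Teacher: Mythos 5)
Your proof is correct and is essentially the paper's own argument: the paper gives no separate proof of Proposition~\ref{P5.1}, stating only that ``the same arguments'' as for Proposition~\ref{P4.2} apply, and your write-up is exactly that adaptation by contraposition via \eqref{O1}, Proposition~\ref{P4.10}\,\eqref{P4.10.1} and \eqref{O5}. The extra bookkeeping you supply --- the max-norm reduction $d(\by,\widetilde K)=d(\by_0,L(y))$ since $\by_i\in K_i$, and the passage from the localized intersections $F_i(x)\cap K_i\cap B_\rho(\by_i)\ne\es$ to $x\in\widehat\Omega$ --- is precisely what the paper leaves implicit, and you handle it correctly.
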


Condition $(\widehat{QC})_C$ in the current setting is reformulated as follows:
\begin{enumerate}
\item [ ]
\begin{enumerate}
\item [$(\widehat{QC})_C^\prime$]
there is an $\varepsilon>0$ such that
$\big\|\sum_{i=0}^{n+1}x_i^*\big\|\ge\eps$
for all
$(x_i,y_i)\in\gph F_i\cap B_{\varepsilon}(\bx,\by_i)$,
$x_i^*\in D^{*C}F_i(x_i,y_i)(\eps\B_{Y_i^*})$
$(i=0,\ldots,n)$,
$x_{n+1}\in\Omega\cap B_{\varepsilon}(\bx)$ and
$x_{n+1}^*\in N^C_\Omega(x_{n+1})$ such that $\sum_{i=0}^{n+1}\|x_i^*\|=1$,
\end{enumerate}
\end{enumerate}
while the corresponding reformulation $(\widehat{QC})_F^\prime$ of condition $(\widehat{QC})_F$ is obtained by replacing $N^C$ and $D^{*C}$ in $(\widehat{QC})_C^\prime$ by $N^F$ and $D^{*F}$, respectively.
In view of Proposition~\ref{P5.1}, Theorem~\ref{T5.10} yields the following statement.

\begin{corollary}
Let $X$, $Y_0,\ldots,Y_n$ be Banach spaces, the sets $\Omega$, $\gph F_i$ $(i=0,\ldots,n)$ and $K_i$ $(i=1,\ldots,n)$ be closed, and $\de>0$.
Suppose $L$ satisfies conditions \eqref{O1} and \eqref{O5}.
If $(\bx,\by_0)$ is an extremal point of problem \eqref{P2} and condition $(\widehat{QC})_C^\prime$ is satisfied,
then
there is an ${M>0}$ such that,
for any $\varepsilon>0$, there exist $(x_i,y_i)\in\gph F_i\cap B_{\varepsilon}(\bx,\by_i)$ $(i=0,\ldots,n)$,
$x_{n+1}\in\Omega\cap B_{\varepsilon}(\bx)$,
$y\in B_\de(\by_0)$,
$v_0\in\cl L(y)\cap B_\eps(\by_0)$,
$y_0^*\in N^C_{\cl L(y)}(v_0)+\eps\B_{Y_0^*}$,
$v_i\in K_i\cap B_\eps(\by_i)$
and $y_i^*\in N^C_{K_i}(v_i)+\eps\B_{Y_i^*}$ $(i=1,\ldots,n)$
such that $\sum_{i=0}^n\|y_i^*\|=1$ and
\begin{gather*}
0\in\sum_{i=0}^n D^{*C}F(x_i,y_i)(y_i^*) +N^C_\Omega(x_{n+1})\cap(M\B_{X^*})+\eps\B_{X^*}.
\end{gather*}

If $X$ is Asplund and condition $(\widehat{QC})_F^\prime$ is satisfied,
then the above assertion holds true with $N^F$ and $D^{*F}$ in place of $N^C$ and $D^{*C}$, respectively.
\end{corollary}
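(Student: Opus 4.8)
The plan is to reduce the statement to the already-established Theorem~\ref{T5.10} via Proposition~\ref{P5.1}, so that the argument becomes a matter of unwinding the product construction. First I would fix the given $\de>0$. Since $(\bx,\by_0)$ is extremal in problem \eqref{P2} and $L$ satisfies conditions \eqref{O1} and \eqref{O5}, Proposition~\ref{P5.1} shows that the triple $\{F,\Omega,K\}$, with $F=(F_0,\ldots,F_n)$, $K=K_0\times\cdots\times K_n$ and $\by=(\by_0,\ldots,\by_n)$, is extremal at $(\bx,\by)$ with respect to $\Xi^\de$. As extremality is the strongest of the properties in Definition~\ref{D4.1} and in particular implies approximate stationarity, the triple $\{F,\Omega,K\}$ is approximately stationary at $(\bx,\by)$ with respect to $\Xi^\de$, which is exactly the hypothesis needed to run the multiple-mapping machinery.

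Next I would check that we are in the precise setting of Theorem~\ref{T5.10}, only with the $n+1$ mappings $F_0,\ldots,F_n$ (re-indexed to start from $0$) in place of $n$ mappings, and the constraint $\Omega$ carried by the terminal index $n+1$. Under this re-indexing, condition $(\widetilde{QC})_C^\prime$ is literally condition $(\widetilde{QC})_C$, so the qualification hypothesis is met. The closedness requirements are also in force: $\Omega$, each $\gph F_i$ and each $K_i$ $(i=1,\ldots,n)$ are closed by assumption, while the members $\cl L(y)$ of the family $\Xi_0^\de$ are closed by construction; hence every member of each perturbation family $\Xi_0^\de,\{K_1\},\ldots,\{K_n\}$ is closed.

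Applying Theorem~\ref{T5.10} in its $(n+1)$-mapping form then yields $M>0$ such that, for each $\eps>0$, there are points $(x_i,y_i)\in\gph F_i\cap B_\eps(\bx,\by_i)$, members $\widetilde K_i\in\Xi_i$, points $v_i\in\widetilde K_i\cap B_\eps(\by_i)$ and functionals $y_i^*\in N^C_{\widetilde K_i}(v_i)+\eps\B_{Y_i^*}$ $(i=0,\ldots,n)$, together with $x_{n+1}\in\Omega\cap B_\eps(\bx)$, satisfying $\sum_{i=0}^n\|y_i^*\|=1$ and the coderivative inclusion. The final step is to translate the abstract family memberships into the explicit normal-cone statement. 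For $i=0$, membership $\widetilde K_0\in\Xi_0^\de$ means $\widetilde K_0=\cl L(y)$ for some $y\in K_0\cap B_\de(\by_0)$, which gives $v_0\in\cl L(y)\cap B_\eps(\by_0)$ and $y_0^*\in N^C_{\cl L(y)}(v_0)+\eps\B_{Y_0^*}$; for each $i=1,\ldots,n$ the family is the singleton $\{K_i\}$, so necessarily $\widetilde K_i=K_i$, giving $v_i\in K_i\cap B_\eps(\by_i)$ and $y_i^*\in N^C_{K_i}(v_i)+\eps\B_{Y_i^*}$. This reproduces the asserted conclusion, and the Asplund case follows verbatim, using condition $(\widetilde{QC})_F^\prime$ and the \Fr\ part of Theorem~\ref{T5.10}, with $N^F$ and $D^{*F}$ in place of $N^C$ and $D^{*C}$.

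The genuine mathematical content resides in Proposition~\ref{P5.1} and Theorem~\ref{T5.10}; what remains is bookkeeping. Accordingly, the one place that deserves care — and the step I would treat as the main obstacle — is the faithful passage between the abstract perturbation-family formulation and the concrete normal-cone assertion: confirming that only the $K_0$ component is genuinely perturbed, so that the families $\{K_i\}$ for $i\ge1$ are singletons and $N^C_{\widetilde K_i}=N^C_{K_i}$, and that the index shift $n\mapsto n+1$ carries $(\widetilde{QC})_C^\prime$ exactly onto $(\widetilde{QC})_C$ while keeping the terminal index $n+1$ reserved for $\Omega$.
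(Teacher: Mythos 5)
Your proposal is correct and follows exactly the route the paper intends: Proposition~\ref{P5.1} gives extremality of $\{F,\Omega,K\}$ with respect to $\Xi^\de$, extremality implies approximate stationarity (the hierarchy the paper relies on after Theorem~\ref{T1.4}), and Theorem~\ref{T5.10} applied with $n+1$ mappings and the families $\Xi_0^\de,\{K_1\},\ldots,\{K_n\}$ yields the conclusion after unwinding the memberships $\widetilde K_0=\cl L(y)$ and $\widetilde K_i=K_i$ $(i\ge1)$. Your bookkeeping of the index shift and of the fact that only the $K_0$ component is perturbed matches the paper's construction, so nothing further is needed.
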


\noindent{\bf Acknowledgments }
A part of the work was done during Alexander Kruger's stay at the Vietnam Institute for Advanced Study in Mathematics in Hanoi.
He is grateful to the Institute for its hospitality and supportive environment.

\red{We thank the reviewers for their careful reading of the paper, detailed analysis of the approach and very specific comments and suggestions which led to significant changes in the paper improving its readability. We would also like to thank the handling editor for the perfect choice of the reviewers who are real experts in the field.}
\bigskip

\noindent{\bf Funding Information }
Nguyen Duy Cuong is supported by Vietnam National Program for the Development of Mathematics 2021-2030 under grant number B2023-CTT-09.
\bigskip

\noindent{\bf Data availability }
Data sharing is not applicable to this article as no datasets have been generated or analysed during the current study.
\bigskip

\section*{Declarations}

\noindent{\bf Competing Interests } The authors have no competing interests to declare that are relevant to the content of this article.

\bibliography{BUCH-kr,Kruger,KR-tmp}
\bibliographystyle{spmpsci}
\end{document}